\documentclass[11pt,a4paper]{article}
\usepackage[T1]{fontenc}
\usepackage[utf8]{inputenc}
\usepackage{authblk}
\usepackage{geometry}
\geometry{left=2.5cm,right=2.5cm}

\usepackage{color}
\usepackage{cite}
\usepackage{amsmath}
\usepackage{graphicx,wrapfig}
\usepackage{amssymb,amsgen,color}
\usepackage{amsfonts}
\usepackage{hyperref}
\usepackage{multirow}
\usepackage{epsfig,epstopdf,color,bm}
\usepackage{booktabs}
\usepackage{verbatim}
\usepackage{multicol}
\usepackage{makecell}
\usepackage{amsthm}

\usepackage{subcaption}

\newtheorem{theorem}{Theorem}[section]
\newtheorem{lemma}[theorem]{Lemma}

\newcommand{\eps}{\varepsilon}
\newcommand{\nn}{\nonumber}

\newtheorem{remark}[theorem]{Remark}

\numberwithin{equation}{section}

\author[1]{Yue Feng}
\affil[1]{Laboratoire Jacques-Louis Lions, Sorbonne Universit\'e, Paris 75005, France}

\author[1]{Georg Maierhofer}

\author[1]{Katharina Schratz}

\date{}

\begin{document}

\title{Long-time error bounds of low-regularity integrators for nonlinear Schr\"odinger equations}

\maketitle

\begin{abstract}
\noindent
We introduce a new non-resonant low-regularity integrator for the cubic nonlinear Schr\"odinger equation (NLSE) allowing for long-time error estimates which are optimal in the sense of the underlying PDE. The main idea thereby lies in treating the zeroth mode exactly within the discretization. For long-time error estimates, we rigorously establish the error bounds of different low-regularity integrators for the NLSE with small initial data characterized by a dimensionless parameter $\eps \in (0, 1]$. We begin with the low-regularity integrator for the quadratic NLSE in which the integral is computed exactly and the improved uniform first-order convergence in $H^r$ is proven at $O(\eps \tau)$ for solutions in $H^r$ with $r > 1/2$ up to the time $T_{\eps} = T/\eps$ with fixed $T > 0$. Then, the improved uniform long-time error bound is extended to a symmetric second-order low-regularity integrator in the long-time regime. For the cubic NLSE, we design new non-resonant first-order and symmetric second-order low-regularity integrators which treat the zeroth mode exactly and rigorously carry out the error analysis up to the time $T_{\eps} = T/\eps^2$.  With the help of the regularity compensation oscillation (RCO) technique, the improved uniform error bounds are established for the new non-resonant low-regularity schemes, which further reduce the long-time error by a factor of $\eps^2$ compared with classical low-regularity integrators for the cubic NLSE. Numerical examples are presented to validate the error estimates and compare with the classical time-splitting methods in the long-time simulations.
\end{abstract}

{\bf Keywords:}  Nonlinear Schr\"odinger equation, low-regularity integrators, symmetric scheme, long-time error bound, regularity compensation oscillation

\maketitle


\section{Introduction}
We consider the following quadratic nonlinear Schr\"odinger equation (NLSE) with small initial data on the one-dimensional (1D) torus $\mathbb{T} = (-\pi, \pi)$ \cite{BT,KPV,Kish,Ozawa}
\begin{equation}
\left\{
\begin{aligned}
&i\partial_t u(x, t) = -\partial^2_x u(x, t) + u^2(x, t), \quad x \in \mathbb{T}, \ t>0,	\\
&u(x, 0) = \eps \phi(x), \quad x \in \mathbb{T},
\end{aligned}\right.
\label{eq:QNLSE1}
\end{equation}
and the cubic NLSE \cite{BC,CW,Sasaki,Sun}
\begin{equation}
\left\{
\begin{aligned}
&i\partial_t u(x, t) = -\partial^2_x u(x, t) + |u(x, t)|^2 u(x, t), \quad x \in \mathbb{T}, \ t>0,	\\
&u(x, 0) = \eps \phi(x), \quad x \in \mathbb{T},
\end{aligned}\right.
\label{eq:CNLSE1}
\end{equation}
where $i = \sqrt{-1}$, $u(x, t) \in \mathbb{C}$ is the complex wave function/order parameter with the spatial coordinate $x$ and time $t$, $\phi(x)$ is a given complex-valued function, and $\eps \in (0, 1]$ is a dimensionless parameter to characterize the size of the initial data. For notational simplicity, we focus entirely on the 1D case in this paper, but we note that the constructions and analysis extend verbatim to the study of low-regularity schemes for the NLSE considered on the $d$-dimensional torus $\mathbb{T}^d$.

The nonlinear Schr\"odinger equation (NLSE) as a canonical dispersive partial differential equation (PDE) plays a significant role in many fields including physics, chemistry, biology and engineering \cite{ADK,PS,SS}. The specific forms of the nonlinearity are related to diverse applications. Specially, the quadratic NLSE appears in nonlinear optics for the optical material with a $\chi^{(2)}$ (i.e., quadratic) nonlinear response, laser-plasma interactions and wave propagation in nonlinear fibers \cite{BTST,CCO2,CMS}. The cubic NLSE, also called Gross--Pitaevskii equation (GPE), is widely used to describe the Bose-Einstein condensate (BEC) \cite{BC,PS,Sch}. In the analytical aspect, a great deal of effort has been put into the Cauchy problem for the NLSE including the existence and uniqueness of the solution as well as the asymptotic behaviour in time \cite{BJ,CW,GV,Ozawa}. Recently, increasing work has been devoted to the study of the long-time behaviour of nonlinear evolution equations and the effect of the nonlinearity to the solutions \cite{BT,CaS,FO}. The lifespan of the semilinear Schr\"odinger equation with small initial data has attracted much interest, which shows that the lifespan of the quadratic NLSE \eqref{eq:QNLSE1} and the cubic NLSE \eqref{eq:CNLSE1} is at least $O(\eps^{-1})$ and  $O(\eps^{-2})$, respectively \cite{BT,II,Oh,Sun}. 

Rescaling the amplitude of the variable by introducing $w(x, t) = u(x, t)/\eps$, we have the following quadratic NLSE with $O(\eps)$-nonlinearity and $O(1)$-initial data
\begin{equation}
\left\{
\begin{aligned}
&i\partial_t w(x, t) = -\partial^2_x w(x, t) + \eps w^2(x, t), \quad  x \in \mathbb{T}, \ t>0,	\\
&w(x, 0) = \phi(x), \quad  x \in \mathbb{T},
\end{aligned}\right.
\label{eq:QNLSE2}
\end{equation}
and the cubic NLSE with $O(\eps^2)$-nonlinearity and $O(1)$-initial data
\begin{equation}
\left\{
\begin{aligned}
&i\partial_t w(x, t) = -\partial^2_x w(x, t) + \eps^2 |w(x, t)|^2 w(x, t), \quad  x \in \mathbb{T}, \ t>0,	\\
&w(x, 0) = \phi(x), \quad  x \in \mathbb{T}.
\end{aligned}\right.
\label{eq:CNLSE2}
\end{equation}
In fact, the long-time dynamics of the quadratic NLSE \eqref{eq:QNLSE2} with $O(\eps)$-nonlinearity and $O(1)$-initial data is equivalent to that of the NLSE \eqref{eq:QNLSE1} with $O(1)$-nonlinearity and $O(\eps)$-initial data. The equivalence of the long-time dynamics for the cubic NLSEs \eqref{eq:CNLSE1} and \eqref{eq:CNLSE2} is similar.

Over the past decades, many accurate and efficient numerical methods have been proposed to solve the NLSE including the finite difference methods, exponential integrators, time-splitting methods \cite{ABB,CCO1,DFP,HO,MQ}. According to these error estimates, time-splitting methods and exponential integrators for the NLSE  require two additional derivatives, i.e., $\phi \in H^{r+2}(\mathbb{T}^d)$ with $r > d/2$ to get first-order convergence in $H^{r}(\mathbb{T}^d)$ and finite difference methods require even more regularity of the initial data. For nonsmooth initial data, a low-regularity exponential-type integrator was proposed to get first-order convergence  in $H^{r}(\mathbb{T}^d)$ for $\phi \in H^{r+1}(\mathbb{T}^d)$ in the cubic case and no additional regularity is required for the quadratic case \cite{OS}. Later, filtered low-regularity schemes were proposed which allow for low regularity estimates thanks to discrete Bourgain spaces and discrete Strichartz type estimates \cite{LW,ORS,WY}. The low-regularity schemes are also called resonance-based schemes due to the construction around Fourier based expansions of the solution and the underlying resonant structure of the equation \cite{BS}. In addition, the resonance-based schemes are introduced to approximate the dynamics of other dispersive PDEs such as the KdV equation and the nonlinear Klein--Gordon equation \cite{CS,HS,MS} with a framework shown in \cite{BS}. However, existing error analysis is so far restricted to finite time $T = O(1)$, see \cite{CS,HS,MS,OS} and the references therein. Deserve to be mentioned, a symplectic low-regularity integrator for the KdV equation was presented that preserves the geometric structure of the problem \cite{MS}. The NLSE is time reversible or symmetric, i.e., it is unchanged under the change of variable in time as $t \to -t$ and taken conjugate in the equation \cite{ABB}. It is important to apply the symmetric scheme to discretize the NLSE such that the symmetric property is still valid at the discrete level. Also, the geometric property is essential to control the long-time behaviour of numerical schemes \cite{CHL,WZ}. 

In the long-time regime, the behaviours of different numerical schemes for the NLSE on the compact domain have been studied \cite{CCMM,FGP1,FGP2,GL1,GL2}. For the time-splitting method, the improved uniform error bound was proven at $O(\eps^2\tau^2)$ for the long-time dynamics of the cubic NLSE up to the time of order $O(1/\eps^2)$ via the regularity compensation oscillation (RCO) technique with the regularity assumption $u \in H^5(\mathbb{T})$ \cite{BCF}.  However, as far as we know, there is no work on the long-time error estimates of the low-regularity schemes for the NLSE up to now. The aim of this paper is to carry out the long-time error bounds on different low-regularity schemes for the NLSE. For the quadratic NLSE, we focus on the classical first-order and symmetric second-order low-regularity integrators and rigorously establish the improved uniform error bounds up to the time $T_{\eps} = T/\eps$ with fixed $T > 0$. However, for the cubic NLSE, the classical low-regularity schemes just obtain uniform error bounds up to the time of order $O(1/\eps^2)$. Note the appropriate long-time regime for the cubic NLSE is at the time of order $O(1/\eps^2)$ as opposed to the time of order $O(1/\eps)$ (the latter regime is natural for the quadratic case, cf. \eqref{eq:QNLSE2} vs. \eqref{eq:CNLSE2}). Indeed, some of the most successful low-regularity schemes (cf. \cite{OS,BS}) are based on an iteration of Duhamel's formula in the twisted variable $v=\exp(-it\partial_x^2)u$ which leads to an approximation of the form
\begin{align*}
v(t_n+\tau)\approx \sum_{l \in \mathbb{Z}}e^{ilx}\!\!\!\!\!\!\!\sum_{\substack{l_1, l_2, l_3 \in \mathbb{Z} \\ l = -l_1+l_2+l_3}}\!\!\! \underbrace{e^{i t_n(l^2+l_1^2-l_2^2-l_3^2)}}_{=:\mathcal{F}_1}\underbrace{\int^{\tau}_0\!\! e^{is(l^2+l_1^2-l_2^2-l_3^2)} ds}_{=:_{\mathcal{F}_2}} \overline{\hat{v}_{l_1}(t_n)}\hat{v}_{l_2}(t_n)\hat{v}_{l_3}(t_n),
\end{align*}
where $\hat{v}_l$ are the Fourier coefficients of the unknown function $v$. The low-regularity integrator in \cite{OS} is then constructed from this expression by approximating $\exp(is(l^2+l_1^2-l_2^2-l_3^2))\approx \exp(2isl_1^2)$. From our analysis based on the regularity compensation oscillation (RCO) technique in section~\ref{sec:improved_bound_NRL1}, we deduce that summation of the factors $\mathcal{F}_1$ leads to cancellation of their error contribution in the long-time regime whenever their phase is non-zero, i.e. whenever $l^2+l_1^2-l_2^2-l_3^2\neq 0$.

As a result, at the time of order $O(1/\eps^2)$ the dominant contribution to the error arises from the zeroth mode, i.e. the terms when $l^2+l_1^2-l_2^2-l_3^2=0$. Our novel idea is then to treat this particular mode exactly, meaning we do not approximate the integral in $\mathcal{F}_2$ for which we have the exact expression $\int_0^\tau 1 ds=\tau$. This idea leads to improved performance in the long-time regime, and allows us in particular to prove improved uniform error bounds using the RCO technique. Further details on this construction can be found in section~\ref{sec:design_non_resonant_schemes} but let us briefly mention a numerical example shown in Figure~\ref{fig:cubic_err_fn_of_epsilon_introduction}, which highlights how our non-resonant scheme (``NRLI1'') leads to significantly improved long-time behaviour in comparison to state-of-the-art resonance-based low-regularity integrator for the cubic NLSE \cite{OS} (``Ostermann \& Schratz '18''). In this particular experiment we take initial data in $H^2$ and a fixed time step $\tau=0.05$ for various values of $\eps$. Further detailed numerical results are provided in section~\ref{sec:numerical_results}.

\begin{figure}[h!]
	\centering
	\includegraphics[width=0.5\textwidth]{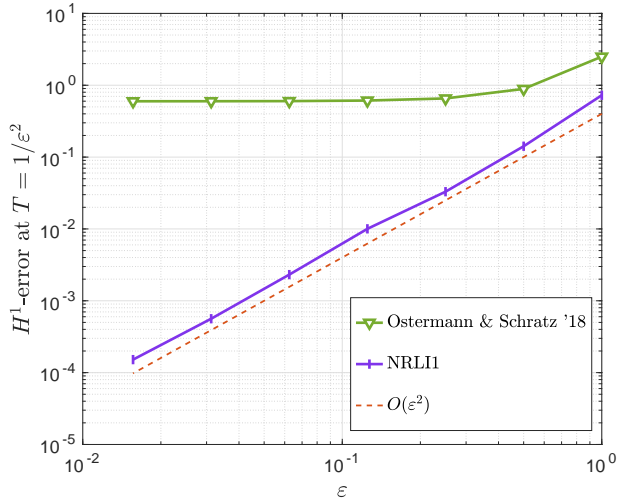}
	\caption{Comparison of long-time error of low-regularity schemes.}
	\label{fig:cubic_err_fn_of_epsilon_introduction}
\end{figure}

The rest of the paper is organized as follows. In section 2, we present the first-order and symmetric second-order low-regularity integrators for the quadratic NLSE and rigorously establish the improved uniform error bounds up to the time $T_{\eps} = T/\eps$. In section 3, we design new non-resonant first-order and symmetric second-order low-regularity schemes for the cubic NLSE and carry out the error bounds up to the time $T_{\eps} = T/\eps^2$. Numerical results for the long-time dynamics of  the NLSE are shown in section 4. Finally, some conclusions are drawn in section 5. Throughout this paper, the notation $A \lesssim B$ is used to represent that there exists a generic constant $C>0$ independent of the time step $\tau$ and $\eps$ such that $|A| \leq C B$. We always let $r > 1/2$ and denote by $\|\cdot\|_r$ the standard $H^r = H^r(\mathbb{T})$ Sobolev norm and exploit the well-known bilinear estimate 
\begin{equation}
\left\|fg\right\|_r	\leq C_{r}\left\|f\right\|_r\left\|g\right\|_r,
\label{eq:bi}
\end{equation}
which holds for a constant $C_r > 0$.

\section{Long-time error bounds for quadratic NLSE}
In this section, we recall the first-order and symmetric second-order low-regularity integrators for the quadratic NLSE \eqref{eq:QNLSE2} and establish the improved uniform error bounds up to the time $T_{\eps} = T/\eps$. 

For $f \in L^2(\mathbb{T})$, we denote its Fourier expansion by $f(x) = \sum_{l \in \mathbb{Z}}\hat{f}_l e^{ilx}$. We define a regularization of $\partial^{-1}_x$ through its action in Fourier space by 
\begin{equation}
(\partial^{-1}_x)_l := \left\{
\begin{aligned}
& (il)^{-1}, \quad l \neq 0,	\\
& 0,  \qquad\quad  l = 0,
\end{aligned}\right.
\quad {\rm i.e.,} \quad \partial^{-1}_x f(x) = \sum_{l \in \mathbb{Z}\backslash \{0\}} \frac{1}{il}\hat{f}_l e^{ilx}.
\end{equation}
For the so-called $\varphi_1$ function defined as
\begin{equation}
\varphi_1(z) = \frac{e^z-1}{z},
\end{equation}
by continuity, we have
\begin{equation}
\left(\frac{e^{it\partial^2_x}-1}{it\partial^2_x}\right)_l = \left\{
\begin{aligned}
& \frac{e^{-itl^2}-1}{-itl^2}, \quad l \neq 0,	\\
& 1,  \qquad\qquad \quad l = 0.
\end{aligned}\right.
\end{equation}

\subsection{A first-order low-regularity integrator}
As presented in \cite{OS}, we introduce the twisted variable $v(t) = e^{-it\partial^2_x}w(t)$, which satisfies the following quadratic NLSE 
\begin{equation}
\left\{
\begin{aligned}
&i\partial_t v(x, t) = \eps e^{-it\partial^2_x}\left( e^{it\partial^2_x}v(x, t)\right)^2, \quad x \in \mathbb{T}, \ t>0,	\\
&v(x, 0) = \phi(x), \quad x \in \mathbb{T}.
\end{aligned}\right.
\label{eq:QNLSE_v}
\end{equation}
In the following of this paper, we denote by $v(t) := v(x, t)$ in short, i.e., omit the spatial variable when there is no confusion.  By Duhamel's formula, the exact solution of $v(t)$ is given as
\begin{equation}
v(t_n+\tau) = v(t_n) -i\eps\int^{\tau}_0 e^{-i(t_n+s)\partial^2_x}\left( e^{i(t_n+s)\partial^2_x}v(t_n+s)\right)^2ds,	
\label{eq:Duh}
\end{equation}
where $t_n  = n\tau$ for $ n = 0, 1, \ldots, \frac{T/\eps}{\tau}-1$. Since $e^{it\partial^2_x}$ is a linear isometry on $H^r$ for $r > 1/2$ and all $t \in \mathbb{R}$, we have 
\begin{align}
\left\|v(t_n+\tau) - v(t_n)\right\|_r & \leq \eps\int^{\tau}_0 \left\|v(t_n+s)\right\|_r^2 ds \leq \eps \tau \sup_{0 \leq s \leq \tau} \left\|v(t_n+s)\right\|_r^2. 
\end{align}
In the mild solution \eqref{eq:Duh}, we use $v(t_n)$ to approximate $v(t_n+s)$ for $0\leq s\leq \tau$, which leads to 
\begin{equation}
v(t_n+\tau) \approx v(t_n) -i\eps\int^{\tau}_0 e^{-i(t_n+s)\partial^2_x}\left( e^{i(t_n+s)\partial^2_x}v(t_n)\right)^2ds.
\label{eq:fo1}
\end{equation}
Then, we write the integral in Fourier space as
\begin{align}
I_1^{\tau}(v, t_n) & = \int^{\tau}_0 e^{-i(t_n+s)\partial^2_x}\left( e^{i(t_n+s)\partial^2_x}v\right)^2ds \nn \\
& = \int^{\tau}_0 e^{-i(t_n+s)\partial^2_x}\left[\left( e^{i(t_n+s)\partial^2_x}\sum_{l_1}\hat{v}_{l_1}e^{il_1x}\right)\left( e^{i(t_n+s)\partial^2_x}\sum_{l_2}\hat{v}_{l_2}e^{il_2x}\right)\right]ds \nn \\
& = \int^{\tau}_0 \sum_{l_1, l_2}e^{i(t_n+s)[(l_1+l_2)^2-l_1^2-l_2^2]} \hat{v}_{l_1}\hat{v}_{l_2}e^{i(l_1+l_2)x}ds.
\label{eq:I}	
\end{align}
The key relation in the construction of the scheme is 
\begin{equation}
(l_1+l_2)^2 - l_1^2 - l_2^2 = 2l_1l_2,	
\label{eq:freq}
\end{equation}
which allows us to derive the first-order low-regularity scheme without any additional regularity assumption on the exact solution.
Substituting the relation \eqref{eq:freq} into the integral \eqref{eq:I}, we obtain
\begin{align}
I_1^{\tau}(v, t_n) = &\ \sum_{\substack{l_1, l_2 \\ l_1, l_2 \neq 0}}e^{it_n[(l_1+l_2)^2-l_1^2-l_2^2]} \frac{e^{i\tau [(l_1+l_2)^2-l_1^2-l_2^2] }- 1}{2il_1l_2} \hat{v}_{l_1}\hat{v}_{l_2}e^{i(l_1+l_2)x} \nn\\
& \ + 2\tau\hat{v}_0\sum_{ l_1\neq 0}\hat{v}_{l_1}e^{il_1x} +\tau \hat{v}_0^2 \nn\\
= & \ \frac{i}{2}\sum_{\substack{l_1, l_2 \\ l_1, l_2\neq 0}}e^{it_n[(l_1+l_2)^2-l_1^2-l_2^2]} \frac{e^{i\tau [(l_1+l_2)^2-l_1^2-l_2^2] }- 1}{(il_1)(il_2)} \hat{v}_{l_1}\hat{v}_{l_2}e^{i(l_1+l_2)x} \nn\\
& \ + 2\tau\hat{v}_0\sum_{ l_1\in \mathbb{Z}}\hat{v}_{l_1}e^{il_1x} - \tau \hat{v}_0^2 \nn\\
= & \ \frac{i}{2} e^{-it_n\partial^2_x}\left[e^{-i\tau\partial^2_x}\left(e^{i(t_n+\tau)\partial^2_x} \partial^{-1}_x v\right)^2 - \left(e^{it_n\partial^2_x} \partial^{-1}_x v\right)^2 \right] +2\tau\hat{v}_0 v -\tau\hat{v}_0^2.
\end{align}
Denoting $v^n$ as the approximation of $v(t_n)$ and combining with the approximation \eqref{eq:fo1}, we have the first-order low-regularity scheme for $v(t)$ as
\begin{align}
v^{n+1} = &\ \Phi^{\tau}_{t_n}(v^n) \nn\\
:= &\ \left(1-2i\eps\tau\hat{v}^n_0\right)v^n + i\eps \tau (\hat{v}^n_0)^2\nn\\
& \ + \frac{\eps}{2} e^{-it_n\partial^2_x}\left[e^{-i\tau\partial^2_x}\left(e^{i(t_n+\tau)\partial^2_x} \partial^{-1}_x v^n\right)^2 - \left(e^{it_n\partial^2_x} \partial^{-1}_x v^n\right)^2 \right]. 
\label{eq:v_flow}
\end{align}
Finally, we twist the variable back, i.e., $w^n = e^{it\partial^2_x}v^n$, then the first-order low-regularity integrator (LI1) is given as
\begin{align}
w^{n+1} =&\ \left(1-2i\eps\tau\hat{w}^n_0\right)e^{i\tau\partial^2_x}w^n + i\eps \tau (\hat{w}^n_0)^2 \nn\\
&\ + \frac{\eps}{2}\left[\left(e^{i\tau\partial^2_x} \partial^{-1}_x w^n\right)^2 - e^{i\tau\partial^2_x}\left(\partial^{-1}_x w^n\right)^2 \right]. 	
\label{eq:LI1}
\end{align}
\begin{remark}
Similarly, the low-regularity integrator was introduced for the following quadratic NLSE \cite{OS}
\begin{equation}
\left\{
\begin{aligned}
&i\partial_t w(x, t) = -\partial^2_x w(x, t) + \eps |w(x, t)|^2, \quad  x \in \mathbb{T}, \ t>0,	\\
&w(x, 0) = \phi(x), \quad  x \in \mathbb{T}.
\end{aligned}\right.
\label{eq:QNLSE3}
\end{equation}
By the same idea, the first-order low-regularity scheme is 
\begin{align}
\label{eq:LI_con}
w^{n+1} = & \ \left(1-i\eps\tau\overline{\hat{w}}^n_0\right)e^{i\tau\partial^2_x}w^n - i\eps \tau \left\|w^n\right\|^2_{L^2} \nn\\
& \ + \frac{\eps}{2}\partial^{-1}_x\left[\left(e^{i\tau\partial^2_x} w^n\right)\left(e^{-i\tau\partial^2_x} \partial^{-1}_x \overline{w}^n\right) - e^{i\tau\partial^2_x}\left(w^n\partial^{-1}_x \overline{w}^n\right) \right]. 	
\end{align}
\end{remark}

\subsection{Improved uniform error bound of the LI1 \eqref{eq:LI1}}
In this subsection, we are going to establish the improved uniform error bound on the first-order low-regularity integrator (LI1) \eqref{eq:LI1} up to the time $T_{\eps} = T/\eps$ with fixed $T>0$.

\begin{theorem}
\label{thm:w1}
Let $r > 1/2$ and assume that the exact solution of the quadratic NLSE \eqref{eq:QNLSE2} satisfies $w(t) \in H^r$ for $0 \leq t\leq T/\eps$. Then there exists a constant $\tau_0>0$ such that for $0 < \tau \leq \tau_0$, we have the following error bound on the LI1 \eqref{eq:LI1} as
\begin{equation}
\left\|w(t_n) - w^n\right\|_r \lesssim \eps\tau,	\quad 0 \leq n \leq \frac{T/\eps}{\tau}.
\end{equation}
\end{theorem}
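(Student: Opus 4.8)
The plan is to carry out a standard Lady Windermere's fan argument for the one-step integrator $\Phi^\tau_{t_n}$, but with careful tracking of the powers of $\eps$, since the goal is an $O(\eps\tau)$ bound (not merely $O(\tau)$) that is uniform up to $t_n \le T/\eps$. Work throughout in the twisted variable: let $v(t)=e^{-it\partial_x^2}w(t)$ and $v^n=e^{-it_n\partial_x^2}w^n$. Since $e^{it\partial_x^2}$ is an isometry on $H^r$, it suffices to estimate $\|v(t_n)-v^n\|_r$. First I would establish an a priori bound: the exact solution satisfies $\sup_{0\le t\le T/\eps}\|v(t)\|_r = \sup_{0\le t\le T/\eps}\|w(t)\|_r =: M < \infty$ by hypothesis. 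So all local estimates can be phrased in terms of $M$.

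Second, I would prove the \emph{local error} estimate: for $v$ with $\|v\|_r\le M+1$,
\begin{equation}
\left\|v(t_n+\tau) - \Phi^\tau_{t_n}(v(t_n))\right\|_r \lesssim \eps \tau^2 \quad\text{(with constant depending on }M\text{)}.
\nn
\end{equation}
The point is that the scheme is obtained from the exact Duhamel formula \eqref{eq:Duh} by two approximations: replacing $v(t_n+s)$ by $v(t_n)$ inside the integral, and evaluating the resulting oscillatory integral exactly using \eqref{eq:freq}. The second step is exact, so the only error is the frozen-argument substitution. Using $\|v(t_n+s)-v(t_n)\|_r\le \eps s \sup\|v\|_r^2$ from the displayed estimate just before the statement, the integrand difference is $O(\eps)\cdot O(\eps s)=O(\eps^2 s)$, which integrated over $[0,\tau]$ gives $O(\eps^2\tau^2)$ — even better than needed. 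One must also check that the exact-integration step (the replacement of $\int_0^\tau e^{is\,2l_1l_2}\,ds$ by its closed form and the handling of the $l_1 l_2=0$ modes) introduces no error, which is immediate since it is an algebraic identity; the only subtlety is that $\partial_x^{-1}$ is bounded on the range of frequencies with $l_1,l_2\neq 0$, so the bilinear estimate \eqref{eq:bi} applies after pulling out the $(il_1)^{-1}(il_2)^{-1}$ factors, and the $l_1=0$ or $l_2=0$ contributions are the explicit $2\tau\hat v_0 v-\tau\hat v_0^2$ terms, all controlled by $\|v\|_r$.

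Third, the \emph{stability} estimate: for $v,\tilde v\in H^r$ with norms $\le M+1$,
\begin{equation}
\left\|\Phi^\tau_{t_n}(v)-\Phi^\tau_{t_n}(\tilde v)\right\|_r \le (1+C\eps\tau)\,\|v-\tilde v\|_r.
\nn
\end{equation}
This follows because $\Phi^\tau_{t_n}(v)-v = O(\eps\tau)$ in a Lipschitz-in-$v$ way: every term on the right of \eqref{eq:v_flow} other than the leading $v^n$ carries a factor $\eps\tau$ or $\eps$ multiplying a bilinear (hence locally Lipschitz) expression in $v^n$; the apparently $\eps$-only term $\tfrac{\eps}{2}e^{-it_n\partial_x^2}[\cdots]$ is a difference of two squares that, because of the $\partial_x^{-1}$ regularization and the $e^{\pm i\tau\partial_x^2}$ factors, is itself $O(\eps\tau)$ after a Taylor expansion of $e^{-i\tau\partial_x^2}-1$ (this is exactly the mechanism that makes LI1 a genuine $O(\eps\tau)$ perturbation of the identity). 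So the Lipschitz constant of $\Phi^\tau_{t_n}-\mathrm{Id}$ is $O(\eps\tau)$, giving the stated bound.

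Finally, I would combine these via the telescoping identity $v(t_{n+1})-v^{n+1} = [v(t_{n+1})-\Phi^\tau_{t_n}(v(t_n))] + [\Phi^\tau_{t_n}(v(t_n))-\Phi^\tau_{t_n}(v^n)]$, obtaining $e_{n+1}\le C\eps\tau^2 + (1+C\eps\tau)e_n$ for the error $e_n=\|v(t_n)-v^n\|_r$, which by discrete Gronwall yields $e_n \le C\eps\tau^2\cdot\frac{(1+C\eps\tau)^n-1}{C\eps\tau}\le C\eps\tau\,(e^{C\eps n\tau}-1)/(C\eps)\cdot(\eps) \lesssim \tau\,(e^{CT}-1)\cdot\eps$ uniformly for $n\tau\le T/\eps$ — note the crucial cancellation: $n$ can be as large as $T/(\eps\tau)$, and $\eps\tau\cdot n \le T$ keeps the Gronwall factor bounded, while the extra $\eps$ from the local error survives to give the final $O(\eps\tau)$. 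A standard induction is needed to ensure $\|v^n\|_r\le M+1$ stays valid so the local estimates apply (choosing $\tau_0$ small enough that the accumulated error stays below $1$), which closes the bootstrap. The main obstacle I anticipate is the bookkeeping in the stability step — specifically verifying that the $\tfrac{\eps}{2}e^{-it_n\partial_x^2}[\cdots]$ term is truly $O(\eps\tau)$-Lipschitz rather than merely $O(\eps)$-Lipschitz; getting only $O(\eps)$ there would still give a uniform-in-time bound but would lose the improved $\eps\tau$ rate, so the Taylor expansion exploiting the difference structure $e^{-i\tau\partial_x^2}(\cdots)^2 - (\cdots)^2$ must be done with the $\partial_x^{-1}$ factors in hand so that $\|(e^{-i\tau\partial_x^2}-1)f\|_r\lesssim \tau\|f\|_{r+2}$ is applied to something already smoothed by two derivatives.
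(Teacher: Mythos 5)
Your architecture is exactly the paper's: pass to the twisted variable, prove a local truncation error of size $O(\eps^2\tau^2)$ (coming from $\|v(t_n+s)-v(t_n)\|_r\lesssim \eps s$ times the $\eps$ prefactor of the nonlinearity), prove a stability bound with Lipschitz constant $1+C\eps\tau$, and close with Lady Windermere/Gronwall over $n\le T/(\eps\tau)$ steps so that $n\cdot\eps^2\tau^2\lesssim \eps\tau$ and $(1+C\eps\tau)^n\le e^{CT}$. Two cautions on your write-up: the local error you \emph{display}, $O(\eps\tau^2)$, is not enough — it would only give $O(\tau)$ globally — so the $O(\eps^2\tau^2)$ bound you derive in the text is the one that must be carried into the recursion (your final Gronwall line is written with $C\eps\tau^2$ and the extra $\eps$ is inserted by hand; it should start from $C\eps^2\tau^2$).

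The genuine problem is your proposed mechanism for the stability step. You suggest extracting the factor $\tau$ from the term $\tfrac{\eps}{2}e^{-it_n\partial_x^2}\bigl[e^{-i\tau\partial_x^2}(\cdots)^2-(\cdots)^2\bigr]$ by Taylor-expanding $e^{-i\tau\partial_x^2}-1$, i.e.\ using $\|(e^{-i\tau\partial_x^2}-1)f\|_r\lesssim\tau\|f\|_{r+2}$. This does not close at the stated regularity: each factor $\partial_x^{-1}v$ lies only in $H^{r+1}$ when $v\in H^r$, and the product of two $H^{r+1}$ functions is in $H^{r+1}$, not $H^{r+2}$, so this route loses a derivative and would force $v\in H^{r+1}$ — defeating the low-regularity claim. (Moreover the argument of the exponential difference is really $(l_1+l_2)^2$, not matched by the two inverse derivatives in $l_1$ and $l_2$ separately.) The correct, derivative-free route — the one implicit in the paper — is to \emph{not} undo the exact integration: the bracketed expression is, by construction, the exact value of $\int_0^\tau e^{-i(t_n+s)\partial_x^2}\bigl(e^{i(t_n+s)\partial_x^2}v\bigr)^2\,ds$ minus the explicit zero-mode terms $2\tau\hat v_0 v-\tau\hat v_0^2$. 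In that integral representation the factor $\tau$ comes for free from the length of the integration interval, and the Lipschitz bound $\|I_1^\tau(f,t_n)-I_1^\tau(g,t_n)\|_r\le \tau C_r\|f+g\|_r\|f-g\|_r$ follows from the isometry of $e^{it\partial_x^2}$ and the bilinear estimate applied pointwise in $s$, with no loss of regularity. With that substitution your proof is the paper's proof.
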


\begin{remark}
The improved uniform error bound on the first-order low regularity integrator \eqref{eq:LI_con} for the quadratic NLSE \eqref{eq:QNLSE3} is still valid. We just show the proof of the LI1 \eqref{eq:LI1} and omit the detailed proof of the scheme \eqref{eq:LI_con} for brevity.
\end{remark}

Due to the linear isometry of $e^{it\partial^2_x}$, we have 
\begin{equation}
\left\|w(t_n) - w^n\right\|_r = \left\|e^{it_n \partial^2_x}\left(v(t_n) - v^n\right)\right\|_r = \left\|v(t_n) - v^n\right\|_r,
\end{equation}
then it remains to prove the error bound for $v(t_n)$ as follows.

\begin{theorem}
\label{thm:v1}
Let $r > 1/2$ and assume that the exact solution of the quadratic NLSE  \eqref{eq:QNLSE_v} satisfies $v(t) \in H^r$ for $0 \leq t\leq T/\eps$. Then there exists a constant $\tau_0>0$ such that for $0 < \tau \leq \tau_0$, we have the following error bound for the first-order low-regularity scheme \eqref{eq:v_flow}
\begin{equation}
\left\|v(t_n) - v^n\right\|_r \lesssim \eps\tau,	\quad 0 \leq n \leq \frac{T/\eps}{\tau}.
\label{eq:Q1v_bound}
\end{equation}
\end{theorem}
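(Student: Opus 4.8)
The plan is to carry out a standard Lady Windermere's fan argument on the error $e^n := v(t_n) - v^n$, but tracking powers of $\eps$ carefully so that the local error carries an extra factor of $\eps^2\tau$ (beyond the trivial $\eps\tau$), and so that the stability/accumulation over $n \lesssim (T/\eps)/\tau$ steps produces only $\eps\tau$ rather than $\tau$. The two ingredients are (i) a local error estimate $\|v(t_n+\tau) - \Phi^\tau_{t_n}(v(t_n))\|_r \lesssim \eps^2\tau^2$, uniformly in $n$, and (ii) a stability estimate for the numerical flow $\Phi^\tau_{t_n}$ showing it is Lipschitz with constant $1 + O(\eps\tau)$ on a suitable ball in $H^r$. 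Combining these via the telescoping identity $e^{n+1} = \bigl(\Phi^\tau_{t_n}(v(t_n)) - \Phi^\tau_{t_n}(v^n)\bigr) + \bigl(v(t_n+\tau) - \Phi^\tau_{t_n}(v(t_n))\bigr)$ gives, after induction, $\|e^n\|_r \lesssim \frac{\eps^2\tau^2}{\eps\tau}\bigl(e^{C\eps\tau n} - 1\bigr) \lesssim \eps^2\tau^2 \cdot \frac{1}{\eps\tau} \cdot \eps\tau n \cdot e^{CT} \lesssim \eps\tau$ on the stated time interval, where one uses $\eps\tau n \leq T$.

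For the local error I would start from Duhamel's formula \eqref{eq:Duh} and the scheme \eqref{eq:v_flow}, which was obtained from \eqref{eq:Duh} by the single approximation $v(t_n+s) \approx v(t_n)$ for $0 \le s \le \tau$ inside the integral (the remaining manipulation using \eqref{eq:freq} was exact). Hence the local error is exactly $-i\eps \int_0^\tau e^{-i(t_n+s)\partial_x^2}\bigl[(e^{i(t_n+s)\partial_x^2}v(t_n+s))^2 - (e^{i(t_n+s)\partial_x^2}v(t_n))^2\bigr]\,ds$. Writing the difference of squares as a product and using that $e^{it\partial_x^2}$ is an isometry on $H^r$ together with the bilinear estimate \eqref{eq:bi}, this is bounded by $\eps\tau \sup_{0\le s\le\tau}\|v(t_n+s) - v(t_n)\|_r \cdot \sup\|v(t_n+s)+v(t_n)\|_r$; and the first factor is $\lesssim \eps\tau$ by the a priori increment bound already displayed in the excerpt (the inequality $\|v(t_n+s)-v(t_n)\|_r \le \eps s \sup\|v(t_n+\cdot)\|_r^2$). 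This yields the desired $\eps^2\tau^2$ local bound, using that $\|v(t)\|_r$ is bounded on $[0,T/\eps]$ by hypothesis.

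The stability estimate is where one must be slightly careful. Writing $\Phi^\tau_{t_n}(v) - \Phi^\tau_{t_n}(\tilde v)$ from \eqref{eq:v_flow}, every term beyond the leading $v - \tilde v$ carries a factor $\eps\tau$: the term $-2i\eps\tau(\hat v_0 v - \hat{\tilde v}_0 \tilde v)$, the term $i\eps\tau((\hat v_0)^2 - (\hat{\tilde v}_0)^2)$, and the term with the difference of the squared $\partial_x^{-1}$-twisted quantities, which I bound using \eqref{eq:bi} and the isometry property of $e^{it\partial_x^2}$ (note $\partial_x^{-1}$ is bounded on $H^r$, indeed improves regularity). So $\|\Phi^\tau_{t_n}(v) - \Phi^\tau_{t_n}(\tilde v)\|_r \le (1 + C\eps\tau(\|v\|_r + \|\tilde v\|_r + 1))\|v - \tilde v\|_r$. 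I expect the main obstacle — and the only genuinely non-routine point — to be making the induction self-consistent: to apply this Lipschitz bound with $v = v(t_n)$ and $\tilde v = v^n$ one needs an a priori bound $\|v^n\|_r \le M$ uniform in $n$, which must itself be extracted from the induction hypothesis $\|e^n\|_r \lesssim \eps\tau$ (so $\|v^n\|_r \le \|v(t_n)\|_r + \eps\tau \le M$ once $\tau \le \tau_0$), closing the loop. One then chooses $\tau_0$ small enough that the error stays within the ball throughout $0 \le n \le (T/\eps)/\tau$; since the accumulated error is $\lesssim \eps\tau$ and the exact solution is bounded, this is consistent, and the Gronwall-type summation $\sum_{k<n}(1+C\eps\tau)^{n-1-k}\eps^2\tau^2 \le \eps^2\tau^2 \cdot \frac{(1+C\eps\tau)^n - 1}{C\eps\tau} \lesssim \eps\tau$ for $\eps\tau n \le T$ completes the proof.
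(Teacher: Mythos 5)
Your proposal is correct and follows essentially the same route as the paper: an $O(\eps^2\tau^2)$ local truncation error (exploiting that the only approximation in the scheme is $v(t_n+s)\approx v(t_n)$, since the integral is evaluated exactly via \eqref{eq:freq}), a stability bound with Lipschitz constant $1+O(\eps\tau)$, and a Gronwall summation over $n\lesssim T/(\eps\tau)$ steps. If anything, you are slightly more explicit than the paper about closing the induction with the a priori bound $\|v^n\|_r\le M$ needed to apply the stability lemma uniformly.
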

Before proving the error bound \eqref{eq:Q1v_bound}, we prepare some results for the stability and local truncation error of the first-order low-regularity scheme \eqref{eq:v_flow}.
\begin{lemma}
(Stability) For $f, g \in H^r$ with $r > 1/2$, we have 
\begin{equation}
\left\|\Phi^{\tau}_t(f)-\Phi^{\tau}_t(g)\right\|_r \leq \left(1+\eps L\tau\right)	\left\|f - g\right\|_r,
\end{equation}
where $L$ depends on $\left\|f+g\right\|_r$.
\label{lem:stability}
\end{lemma}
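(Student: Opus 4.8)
The plan is to prove the stability estimate directly from the explicit formula \eqref{eq:v_flow} for the flow map $\Phi^{\tau}_{t}$, decomposing the difference $\Phi^{\tau}_t(f)-\Phi^{\tau}_t(g)$ into three contributions corresponding to the three lines of \eqref{eq:v_flow} and bounding each using the bilinear estimate \eqref{eq:bi} together with the fact that $e^{it\partial_x^2}$ is a linear isometry on $H^r$. First I would write
\begin{align*}
\Phi^{\tau}_t(f)-\Phi^{\tau}_t(g) = &\ \left[(1-2i\eps\tau\hat f_0)f - (1-2i\eps\tau\hat g_0)g\right] + i\eps\tau\left[(\hat f_0)^2-(\hat g_0)^2\right] \\
&\ + \frac{\eps}{2}e^{-it\partial_x^2}\left[e^{-i\tau\partial_x^2}\left(\left(e^{i(t+\tau)\partial_x^2}\partial_x^{-1}f\right)^2 - \left(e^{i(t+\tau)\partial_x^2}\partial_x^{-1}g\right)^2\right)\right] \\
&\ - \frac{\eps}{2}e^{-it\partial_x^2}\left[\left(e^{it\partial_x^2}\partial_x^{-1}f\right)^2 - \left(e^{it\partial_x^2}\partial_x^{-1}g\right)^2\right].
\end{align*}
For the first bracket I would add and subtract $(1-2i\eps\tau\hat f_0)g$, giving $(1-2i\eps\tau\hat f_0)(f-g) - 2i\eps\tau(\hat f_0-\hat g_0)g$; the first piece contributes $\|f-g\|_r$ plus an $O(\eps\tau)$ term since $|\hat f_0|\lesssim\|f\|_r$, and the second is controlled via $|\hat f_0-\hat g_0|\le \|f-g\|_{L^2}\lesssim\|f-g\|_r$ times $\|g\|_r$. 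The quadratic zeroth-mode term factors as $i\eps\tau(\hat f_0+\hat g_0)(\hat f_0-\hat g_0)$, again bounded by $\eps\tau\|f+g\|_r\|f-g\|_r$.

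For the two oscillatory integral terms, the key structural point is that $\partial_x^{-1}$ maps $H^r$ into itself with norm at most $1$ (it only damps nonzero modes), and $e^{is\partial_x^2}$ is an $H^r$-isometry for every $s$, so each factor $e^{i(t+\tau)\partial_x^2}\partial_x^{-1}f$ etc. has $H^r$-norm at most $\|f\|_r$. Using the standard difference-of-products trick $A^2-B^2 = (A-B)(A+B)$ inside the bilinear estimate \eqref{eq:bi}, together with linearity of $\partial_x^{-1}$ and $e^{is\partial_x^2}$, each of these terms is bounded by $\frac{\eps}{2}C_r\,\|f-g\|_r\,\|f+g\|_r$. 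Collecting all contributions yields $\|\Phi^{\tau}_t(f)-\Phi^{\tau}_t(g)\|_r \le (1+\eps L\tau)\|f-g\|_r$ with $L$ a constant depending only on $C_r$ and $\|f+g\|_r$ (the $\eps$-free part of the oscillatory terms is absorbed because those terms already carry a full factor of $\eps$, so they fit into the $\eps L\tau$ bound — note these terms are not $O(\tau)$ by themselves, so one must be slightly careful here).

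The main subtlety — and the only place requiring genuine care — is precisely that last observation: the two resonance-based difference terms are proportional to $\eps$ but not to $\tau$, so at face value they only give $(1+\eps L)$, not $(1+\eps L\tau)$. The resolution is that in \eqref{eq:v_flow} the bracket $e^{-i\tau\partial_x^2}(e^{i(t+\tau)\partial_x^2}\partial_x^{-1}v)^2 - (e^{it\partial_x^2}\partial_x^{-1}v)^2$ vanishes as $\tau\to 0$; one should exploit that the two square terms differ by an $O(\tau)$ amount in $H^r$ — rewriting the difference of the two oscillatory brackets as a single telescoped expression whose $H^r$-norm carries an explicit factor $\tau$ via $\|(e^{i\tau\partial_x^2}-1)h\|_r\lesssim \tau\|h\|_{r+2}$... except we only have $H^r$ regularity, so this route is unavailable. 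The correct and cleaner argument is therefore to bound these two terms crudely by $\eps\cdot(\text{const})\cdot\|f-g\|_r\|f+g\|_r$ and simply note that for $0<\tau\le\tau_0$ this is $\le \eps\tau_0^{-1}\tau\cdot(\text{const})\cdots$, i.e. absorb the missing $\tau$ into the constant $L$ at the cost of letting $L$ depend on $\tau_0$; this is consistent with the statement since the conclusion is only claimed for $0<\tau\le\tau_0$ and $L$ is allowed to depend on $\|f+g\|_r$ (and on fixed quantities like $\tau_0$). I expect this bookkeeping point to be the only obstacle; everything else is a routine application of \eqref{eq:bi} and isometry.
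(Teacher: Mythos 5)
Your decomposition and your treatment of the first two groups of terms (the $(1-2i\eps\tau\hat f_0)f$ part and the $i\eps\tau(\hat f_0)^2$ part) are fine, and you have correctly located the only nontrivial point: the two bracketed terms in \eqref{eq:v_flow} carry a factor $\eps$ but no visible factor $\tau$. However, your proposed resolution is invalid. For $0<\tau\le\tau_0$ one has $\tau/\tau_0\le 1$, so the inequality you need, $\eps\,C\le \eps\,\tau_0^{-1}\tau\, C$, goes the wrong way: you cannot manufacture a missing factor of $\tau$ by enlarging a $\tau_0$-dependent constant when $\tau$ is allowed to be arbitrarily small. This is not a cosmetic issue, because the factor of $\tau$ is exactly what makes the long-time argument work: with only $(1+\eps L)$ per step, iterating over $n\sim T/(\eps\tau)$ steps in the proof of Theorem \ref{thm:v1} would produce a Gronwall factor $e^{LT/\tau}$ rather than $e^{LT}$, and the improved uniform bound would be lost.

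The correct resolution — which is what the paper's one-line proof implicitly relies on — is that the bracket is not a generic $O(\eps)$ expression: by construction, the three nonlinear terms of \eqref{eq:v_flow} together are the \emph{exact} evaluation of $-i\eps I_1^{\tau}(v^n,t_n)$ from \eqref{eq:I}. Hence
\begin{equation*}
\Phi^{\tau}_t(f)-\Phi^{\tau}_t(g)=(f-g)-i\eps\int_0^{\tau}e^{-i(t+s)\partial_x^2}\Big[\big(e^{i(t+s)\partial_x^2}f\big)^2-\big(e^{i(t+s)\partial_x^2}g\big)^2\Big]\,ds,
\end{equation*}
and applying $A^2-B^2=(A-B)(A+B)$, the bilinear estimate \eqref{eq:bi} and the isometry of $e^{is\partial_x^2}$ to the integrand gives the bound $\eps\tau C_r\|f+g\|_r\|f-g\|_r$ for the nonlinear part, with no extra regularity and no per-term splitting needed. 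Equivalently, if you insist on estimating the bracket term by itself, work in Fourier variables: each coefficient carries the factor $\big(e^{2i\tau l_1l_2}-1\big)/(2il_1l_2)$, whose modulus is at most $\tau$ by $|e^{i\theta}-1|\le|\theta|$, and this supplies the missing $\tau$ mode by mode (the zeroth-mode terms already carry an explicit $\tau$). Either of these repairs closes the gap; the rest of your argument then goes through.
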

\begin{proof}
According to the numerical flow \eqref{eq:v_flow}, we have
\begin{align}
\Phi^{\tau}_t(f)-\Phi^{\tau}_t(g) =& \  \left(f-g\right)- 2i\eps\tau\left(\hat{f}_0f - \hat{g}_0g\right) + i\eps \tau \left(\hat{f}_0^2-\hat{g}_0^2\right)\nn\\
&\ +\frac{\eps}{2} e^{-it_n\partial^2_x}\left[e^{-i\tau\partial^2_x}\left(e^{i(t_n+\tau)\partial^2_x} \partial^{-1}_x f\right)^2 - \left(e^{it_n\partial^2_x} \partial^{-1}_x f\right)^2 \right]\nn\\
&\ -\frac{\eps}{2} e^{-it_n\partial^2_x}\left[e^{-i\tau\partial^2_x}\left(e^{i(t_n+\tau)\partial^2_x} \partial^{-1}_x g\right)^2 - \left(e^{it_n\partial^2_x} \partial^{-1}_x g\right)^2 \right],
\end{align}
which immediately implies
\begin{equation}
\left\|\Phi^{\tau}_t(f)-\Phi^{\tau}_t(g)\right\|_r  \leq \left(1+\eps L\tau\right)\left\|f - g\right\|_r,
\end{equation}
where $L$ depends on $\left\|f+g\right\|_r$.
\end{proof}

\begin{lemma}
(Local truncation error) Let $r > 1/2$ and assume the exact flow of the quadratic NLSE \eqref{eq:QNLSE_v} is $v(t_k + t) = \phi^t(v(t_k)) \in H^{r}$ for $0 \leq t \leq \tau$. Then, we have the following estimates for the local truncation error
\begin{equation*}
\left\|\phi^{\tau}(v(t_k))-\Phi^{\tau}_{t_k}(v(t_k))\right\|_r \leq c \eps^2\tau^2, 	
\end{equation*}
where $c$ depends on $\sup_{0 \leq t \leq T/\eps}\left\|\phi^t(v(t_k))\right\|_{r}$.
\label{lem:leb}
\end{lemma}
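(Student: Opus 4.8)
The plan is to bound the local truncation error by comparing the exact Duhamel representation \eqref{eq:Duh} of $\phi^\tau(v(t_k))$ with the numerical flow $\Phi^\tau_{t_k}(v(t_k))$, exploiting that the construction of $\Phi^\tau_{t_k}$ involved exactly two approximations: replacing $v(t_k+s)$ by $v(t_k)$ in the integrand, and using the exact frequency relation \eqref{eq:freq}. Since the second step is exact, the only error comes from the first. Concretely, I would write
\begin{align*}
\phi^{\tau}(v(t_k))-\Phi^{\tau}_{t_k}(v(t_k)) = -i\eps\int_0^\tau e^{-i(t_k+s)\partial_x^2}\Bigl[\bigl(e^{i(t_k+s)\partial_x^2}v(t_k+s)\bigr)^2 - \bigl(e^{i(t_k+s)\partial_x^2}v(t_k)\bigr)^2\Bigr]ds,
\end{align*}
using that the displayed identity for $I_1^\tau(v,t_k)$ shows $\Phi^\tau_{t_k}(v(t_k)) = v(t_k) - i\eps I_1^\tau(v,t_k)$ with $I_1^\tau(v,t_k) = \int_0^\tau e^{-i(t_k+s)\partial_x^2}(e^{i(t_k+s)\partial_x^2}v(t_k))^2\,ds$.

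Next I would estimate this in $H^r$. Since $e^{it\partial_x^2}$ is a linear isometry on $H^r$ and $H^r$ is a Banach algebra under the bilinear estimate \eqref{eq:bi}, I get
\begin{align*}
\left\|\phi^{\tau}(v(t_k))-\Phi^{\tau}_{t_k}(v(t_k))\right\|_r \leq \eps\int_0^\tau \left\|\bigl(e^{i(t_k+s)\partial_x^2}v(t_k+s)\bigr)^2 - \bigl(e^{i(t_k+s)\partial_x^2}v(t_k)\bigr)^2\right\|_r ds,
\end{align*}
and then factoring the difference of squares and using \eqref{eq:bi} together with the isometry property reduces the integrand to $C_r\|v(t_k+s)-v(t_k)\|_r\,\|v(t_k+s)+v(t_k)\|_r$. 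The second factor is bounded by $2\sup_{0\le t\le \tau}\|\phi^t(v(t_k))\|_r$, which is finite by hypothesis.

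The crucial gain of the extra factor $\eps$ comes from the first factor: from the a priori increment estimate already derived in the excerpt, $\|v(t_k+s)-v(t_k)\|_r \leq \eps s \sup_{0\le\sigma\le s}\|v(t_k+\sigma)\|_r^2 \lesssim \eps\tau$ for $0\le s\le\tau$. Substituting this into the integral yields a bound of order $\eps\cdot\eps\tau\cdot\tau = \eps^2\tau^2$, with the constant $c$ depending only on $\sup_{0\le t\le T/\eps}\|\phi^t(v(t_k))\|_r$ through the bilinear constant $C_r$ and the supremum of the $H^r$ norm. This completes the argument. The main (and only mildly delicate) point is to make sure the increment estimate $\|v(t_k+s)-v(t_k)\|_r \lesssim \eps\tau$ is applied on the correct subinterval and that all suprema are taken over an interval on which the exact solution is known to lie in $H^r$; everything else is a routine application of the algebra property and the isometry of the free propagator, so I do not expect any genuine obstacle here — the content is entirely in tracking the powers of $\eps$.
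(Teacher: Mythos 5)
Your proposal is correct and follows essentially the same route as the paper's proof: both reduce the local error to the single approximation $v(t_k+s)\approx v(t_k)$ inside the Duhamel integral (since the frequency relation \eqref{eq:freq} makes the remaining integral exact), then apply the isometry of $e^{it\partial_x^2}$, the bilinear estimate \eqref{eq:bi} on the difference of squares, and the $O(\eps\tau)$ increment bound on $\|v(t_k+s)-v(t_k)\|_r$ to extract the factor $\eps^2\tau^2$. No gaps.
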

\begin{proof}
By the expression of the exact solution \eqref{eq:Duh} and the approximation \eqref{eq:fo1}, we can write the local truncation error as
\begin{align*}
& \phi^{\tau}(v(t_k))-\Phi^{\tau}_{t_k}(v(t_k)) \\
& \ = -i\eps\int^{\tau}_0 e^{-i(t_n+s)\partial^2_x}\left[\left( e^{i(t_n+s)\partial^2_x}v(t_n+s)\right)^2-\left( e^{i(t_n+s)\partial^2_x}v(t_n)\right)^2\right]ds.
\end{align*}
Combining with the bilinear estimate \eqref{eq:bi}, it implies
\begin{align*}
&\left\|\phi^{\tau}(v(t_k))-\Phi^{\tau}_{t_k}(v(t_k))\right\|_r \\
&\  \leq \eps \int^{\tau}_0\left\|\left( e^{i(t_n+s)\partial^2_x}v(t_n+s)\right)^2-\left( e^{i(t_n+s)\partial^2_x}v(t_n)\right)^2\right\|_r ds \nn \\
&\  \leq c\eps\tau \sup_{0\leq s \leq \tau}\left( \left\|\left(v(t_n+s)+v(t_n)\right)\right\|_r \left\|\left(v(t_n+s)-v(t_n)\right)\right\|_r\right)\nn\\
&\  \leq c\eps^2\tau \sup_{0\leq s \leq \tau} \int^{s}_0 \left\|\left( e^{i(t_n+\xi)\partial^2_x}v(t_n+s)\right)^2\right\|_r ds\nn\\
&\  \leq c\eps^2\tau^2,
\end{align*}
where $c$ depends on $\sup_{0 \leq t \leq T/\eps}\left\|\phi^t(v(t_k))\right\|_{r}$.
\end{proof}

\noindent
\begin{proof}[{Proof of Theorem \ref{thm:v1}}] By Lemma \ref{lem:stability} and Lemma \ref{lem:leb}, it leads to
\begin{align*}
\left\|v(t_{k+1}) - v^{k+1}\right\|_r	& = \left\|\phi^{\tau}(v(t_{k})) - \Phi^{\tau}_{t_k}(v^k)\right\|_r	\\
& \leq  \left\|\phi^{\tau}(v(t_{k})) - \Phi^{\tau}_{t_k}(v(t_k))\right\|_r +  \left\| \Phi^{\tau}_{t_k}(v(t_k))-\Phi^{\tau}_{t_k}(v^k)\right\|_r\\
& \leq c\eps^2\tau^2+ \left(1+\eps L\tau\right)	\left\|v(t_k) - v^k\right\|_r.
\end{align*}
Denoting $e^k = v(t_k)-v^{k}$, we find
\begin{equation}
\|e^{k+1}\|_r - \|e^k\|_r \leq c\eps^2\tau^2+\eps L\tau\|e^k\|_r.
\end{equation}
Since $e^0 = 0$, summing up above inequality for $k =0, 1, \ldots, n$, we obtain
\begin{equation}
\|e^{n+1}\|_r \leq \|e^0\|_r + \eps L\tau \sum_{k=0}^n\|e^k\|_r + c\eps^2\tau^2(n+1)\leq c\eps T\tau+\eps L\tau \sum_{k=0}^n\|e^k\|_r  .
\end{equation}
Applying Gronwall inequality, we have
\begin{equation}
\|e^{n+1}\|_r  \leq c\eps T\tau e^{LT}, \quad 0 \leq n \leq \frac{T/\eps}{\tau}-1,
\end{equation}
which completes the proof of the error bound \eqref{eq:Q1v_bound}. 
\end{proof}

\subsection{A symmetric second-order low-regularity integrator}
In the previous section, we establish the improved uniform error bound for the first-order low-regularity scheme (LI1). Compared with the classical methods (e.g., time-splitting method and exponential integrator), it not only obtains improved uniform error bound but also better approximations at low regularity. However, the above low-regularity scheme destroys the symmetric structure of the quadratic NLSE \eqref{eq:QNLSE2}. In order to overcome this drawback, we approximate \eqref{eq:Duh} in the following form
\begin{align}
v(t_n+\tau) \approx v(t_n) -\frac{i\eps}{2}\int^{\tau}_0 e^{-i(t_n+s)\partial^2_x}\Bigg[& \left(e^{i(t_n+s)\partial^2_x}v(t_n)\right)^2 \nn\\
& \ +\left(e^{i(t_n+s)\partial^2_x}v(t_n+\tau)\right)^2\Bigg]ds.
\label{eq:fo2}
\end{align}
which leads to the following symmetric second-order low-regularity scheme
\begin{align}
v^{n+1} := &\ v^n - i\eps\tau\left(\hat{v}^n_0 v^n + \hat{v}^{n+1}_0 v^{n+1}\right)+ \frac{i\eps \tau}{2}\left((\hat{v}^n_0)^2 +  (\hat{v}^{n+1}_0)^2\right)\nn\\
& \ + \frac{\eps}{4} e^{-it_n\partial^2_x}\Bigg[e^{-i\tau\partial^2_x}\left(\left(e^{i(t_n+\tau)\partial^2_x} \partial^{-1}_x v^n \right)^2 + \left(e^{i(t_n+\tau)\partial^2_x} \partial^{-1}_x v^{n+1} \right)^2 \right)\nn\\
& \ \qquad\qquad\qquad - \left(e^{it_n\partial^2_x}\partial^{-1}_x v^n \right)^2 - \left(e^{it_n\partial^2_x}\partial^{-1}_x v^{n+1} \right)^2 \Bigg]. 
\label{eq:v_flow2}
\end{align}
Twisting the variable back to $w^n$, we obtain the symmetric second-order low-regularity integrator (SLI2) is given as
\begin{align}
w^{n+1} = & \ e^{i\tau\partial^2_x}w^n  - i\eps\tau\left(\hat{w}^n_0 e^{i\tau\partial^2_x}w^n + \hat{w}^{n+1}_0 w^{n+1} \right)+ \frac{i\eps \tau}{2}\left((\hat{w}^n_0)^2 +  (\hat{w}^{n+1}_0)^2\right) \nn\\
&\ + \frac{\eps}{4}\Bigg[\left(e^{i\tau\partial^2_x} \partial^{-1}_x w^n\right)^2 + \left( \partial^{-1}_x w^{n+1}\right)^2 \nn\\
& \ \qquad \quad  - e^{i\tau\partial^2_x}\left(\left(\partial^{-1}_x w^n\right)^2 + \left(e^{-i\tau\partial^2_x}\partial^{-1}_x w^{n+1}\right)^2 \right)\Bigg]. 	
\label{eq:LI2}
\end{align}
\begin{remark}
Let the numerical scheme de denoted by $w^{n+1} = \Phi_{\tau}(w^n)$, then it is easy to check that the method is symmetric in the sense that $\Phi_{\tau} = \Phi_{-\tau}^{-1}$.	
\end{remark}

\begin{remark}
For the quadratic NLSE \eqref{eq:QNLSE3}, we can use the same idea to obtain the symmetric second-order low-regularity integrator
\begin{align}
w^{n+1} = & \ e^{i\tau\partial^2_x}w^n - i\eps\tau\left(\overline{\hat{w}}^n_0 e^{i\tau\partial^2_x}w^n + \overline{\hat{w}}^{n+1}_0 w^{n+1}\right) - \frac{i\eps \tau}{2} \left(\left\|w^n\right\|^2_{L^2}+ \left\|w^{n+1}\right\|^2_{L^2}\right) \nn\\
& \ + \frac{\eps}{4}\partial^{-1}_x\Bigg[\left(e^{i\tau\partial^2_x} w^n\right)\left(e^{-i\tau\partial^2_x} \partial^{-1}_x \overline{w}^n\right) + w^{n+1}\partial^{-1}_x \overline{w}^{n+1}\nn\\
& \qquad \qquad \quad - e^{i\tau\partial^2_x}\left(w^n\partial^{-1}_x \overline{w}^n + \left(e^{-i\tau\partial^2_x}w^{n+1}\right)\left(e^{i\tau\partial^2_x}\partial^{-1}_x \overline{w}^{n+1}\right)\right) \Bigg]. 	
\end{align}
\end{remark}

\subsection{Improved uniform error bound of the SLI2 \eqref{eq:LI2}} In this subsection, we establish the improved uniform error bound of the SLI2 \eqref{eq:LI2}. For simplicity, we just show the main result.
\begin{theorem}
\label{thm:w2}
Assume that the exact solution of quadratic NLSE  \eqref{eq:QNLSE2} satisfies $w(t) \in H^{r+2}$ for $0 \leq t\leq T/\eps$. Then there exists a constant $\tau_0>0$ such that for $0 < \tau \leq \tau_0$, we have the following error bound on the SLI2 \eqref{eq:LI2} as
\begin{equation}
\left\|w(t_n) - w^n\right\|_r \lesssim \eps\tau^2,	\quad 0 \leq n \leq \frac{T/\eps}{\tau}.
\end{equation}
\end{theorem}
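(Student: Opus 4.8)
\textbf{Proof proposal for Theorem \ref{thm:w2}.}

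The plan is to follow the same stability-plus-local-error structure used for the LI1 in Theorems \ref{thm:w1}--\ref{thm:v1}, but now exploiting the symmetry of the scheme to gain an extra order in $\tau$. As before, by the linear isometry of $e^{it\partial_x^2}$ on $H^r$ it suffices to prove the bound for the twisted scheme \eqref{eq:v_flow2}, i.e. $\|v(t_n)-v^n\|_r \lesssim \eps\tau^2$ for $0\le n\le (T/\eps)/\tau$. First I would establish well-posedness of the implicit scheme \eqref{eq:v_flow2}: for $\tau$ small (depending on $\|v^n\|_r$) a Banach fixed-point argument in $H^r$, using the bilinear estimate \eqref{eq:bi} and the boundedness of $\partial_x^{-1}$ and $e^{it\partial_x^2}$, gives existence and uniqueness of $v^{n+1}$ together with an a priori bound $\|v^{n+1}\|_r \le \|v^n\|_r + C\eps\tau(\cdots)$, which will be needed to keep the numerical solution in a fixed ball along the $O(1/\eps)$ time interval.

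Next comes the stability estimate: writing $\Phi^\tau_t$ for the (now implicitly defined) numerical flow of \eqref{eq:v_flow2}, I would show $\|\Phi^\tau_t(f)-\Phi^\tau_t(g)\|_r \le (1+\eps L\tau)\|f-g\|_r$ for $f,g$ in a bounded set, with $L$ depending on the bound. This is the analogue of Lemma \ref{lem:stability}; the only new feature is that the difference of the implicit terms $\hat f_0^{(1)} f^{(1)} - \hat g_0^{(1)} g^{(1)}$ (where superscript $(1)$ denotes one implicit step) must be absorbed, which works for $\tau$ small by the a priori bound above and a Lipschitz estimate on the solution map of the fixed-point equation. The crucial step is the \emph{local truncation error}: I must show $\|\phi^\tau(v(t_k)) - \Phi^\tau_{t_k}(v(t_k))\|_r \lesssim \eps\tau^3 + \eps^2\tau^2$. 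One factor of $\eps$ is structural, exactly as in Lemma \ref{lem:leb}, because the nonlinearity carries an explicit $\eps$; the remaining $\tau^2$ (yielding $\eps\tau^3$) comes from the symmetric trapezoidal-type quadrature \eqref{eq:fo2}: replacing $\frac12[(e^{i(t_n+s)\partial_x^2}v(t_n))^2 + (e^{i(t_n+s)\partial_x^2}v(t_n+\tau))^2]$ for the integrand $(e^{i(t_n+s)\partial_x^2}v(t_n+s))^2$ is a midpoint/trapezoid symmetry cancellation. To see this cleanly I would expand $v(t_n+s) = v(t_n) + O(\eps s)$ from Duhamel, insert into both the exact integrand and the quadrature, and track the difference; the symmetry of the averaging kills the $O(\eps s)$ term's leading contribution, leaving $O(\eps\tau^2)$ inside the integral and hence $O(\eps\tau^3)$ after integrating $ds$, while the remaining Taylor remainder contributes $O(\eps^2\tau^2)$ (two copies of the $\eps$-nonlinearity, uniformly bounded by the $H^{r+2}$ regularity, the two extra derivatives being needed to control $\partial_x^{-1}$ applied to $\partial_t v$ terms that arise from differentiating Duhamel's formula). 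Here the $H^{r+2}$ assumption enters: one needs two spatial derivatives of $v$ bounded to estimate the second-order Taylor remainder in the quadrature, since the scheme only gains regularity through a single $\partial_x^{-1}$.

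Finally I would assemble these: with $e^k = v(t_k)-v^k$, $e^0=0$, the stability and local error estimates give
\begin{equation*}
\|e^{k+1}\|_r \le (1+\eps L\tau)\|e^k\|_r + c(\eps\tau^3 + \eps^2\tau^2),
\end{equation*}
and summing over $k=0,\dots,n$ with $n\tau \le T/\eps$ yields $\sum (\eps\tau^3+\eps^2\tau^2) \cdot (\text{number of steps}) \lesssim \tau^2 + \eps\tau$; a discrete Gronwall argument (noting $e^{\eps L \tau \cdot n} \le e^{LT}$ is uniformly bounded) then gives $\|e^n\|_r \lesssim \eps\tau^2$ as claimed. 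A technical point to handle carefully along the way is that the a priori bound on $\|v^n\|_r$ must be propagated through all $(T/\eps)/\tau$ steps; this is done by a bootstrap/induction: assuming $\|e^k\|_r \le \eps\tau^2$ for $k\le n$ keeps $v^k$ in a fixed ball (for $\tau$ small), which justifies using a uniform $L$ and $c$, and then the Gronwall estimate closes the induction. I expect the main obstacle to be making the trapezoidal symmetry cancellation in the local error rigorous at the level of $H^r$ norms — in particular verifying that differentiating Duhamel's formula once in time and estimating the resulting terms (which involve $e^{-i(t_n+s)\partial_x^2}$ acting on products of twisted functions, without the benefit of the key frequency identity \eqref{eq:freq} in a form that removes derivatives) genuinely only costs the two derivatives available from $w\in H^{r+2}$, rather than more.
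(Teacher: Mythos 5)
Your overall architecture — reduce to the twisted variable by isometry, prove a stability bound of the form $(1+\eps L\tau)$ or $e^{\eps L\tau}$ for the implicit flow, prove a local truncation estimate, and close with discrete Gronwall over $O(1/(\eps\tau))$ steps — is exactly the paper's route (the paper states these as Lemmas \ref{lem:stability2} and \ref{lem:leb2}, citing \cite{BYA,BMS,MS} for details). However, there is a genuine quantitative gap in your local truncation error, and your own arithmetic exposes it: you claim a local defect of size $\eps\tau^3+\eps^2\tau^2$, and then correctly compute that summing this over $T/(\eps\tau)$ steps gives $\tau^2+\eps\tau$ — but $\tau^2+\eps\tau$ is \emph{not} $\lesssim\eps\tau^2$ (each term is strictly larger for $\eps,\tau<1$), so the conclusion does not follow from your estimates. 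The bound actually needed, and the one stated in Lemma \ref{lem:leb2}, is $\|\phi^\tau(v(t_k))-\Psi^\tau_{t_k}(v(t_k))\|_r\le c\,\eps^2\tau^3$: after multiplying by the number of steps this gives $c\eps T\tau^2$, which is what Gronwall turns into the claimed $\eps\tau^2$.

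The missing idea is that the trapezoidal cancellation gains a factor of $\eps$ \emph{in addition to} the structural $\eps$ in front of the nonlinearity, and that there is no genuine $O(\tau^2)$ remainder at all. Concretely, the defect is
\begin{equation*}
-i\eps\int_0^\tau e^{-i(t_n+s)\partial_x^2}\Bigl[\bigl(e^{i(t_n+s)\partial_x^2}v(t_n+s)\bigr)^2-\tfrac12\bigl(e^{i(t_n+s)\partial_x^2}v(t_n)\bigr)^2-\tfrac12\bigl(e^{i(t_n+s)\partial_x^2}v(t_n+\tau)\bigr)^2\Bigr]ds,
\end{equation*}
and for fixed $s$ the bracket is a trapezoid-type error in the slot $\sigma\mapsto v(t_n+\sigma)$; its size is controlled by first and second $\sigma$-derivatives of $v$, and \emph{every} such derivative carries a factor $\eps$ since $\partial_t v=-i\eps e^{-it\partial_x^2}(e^{it\partial_x^2}v)^2$ (with $\partial_t^2 v=O(\eps)$ in $H^r$ at the cost of the two extra derivatives $H^{r+2}$, which is where that hypothesis enters — you identified this correctly). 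Expanding to first order, the linear term $\frac{\tau-2s}{2}\,\partial_\sigma(\cdots)$ integrates to zero up to an $O(\tau)$ commutator with the $s$-dependent propagator, and the remainder is $O(\eps\tau^2)$ pointwise in $s$; integrating in $s$ and multiplying by the prefactor $\eps$ yields $O(\eps^2\tau^3)$ with no $O(\eps^2\tau^2)$ term. Your bookkeeping of "$\eps\tau^3$ from the quadrature" dropped the $\eps$ hidden in $\partial_t v$, and the "$\eps^2\tau^2$ Taylor remainder" should not appear; with the corrected local bound $c\eps^2\tau^3$ the rest of your Gronwall/bootstrap argument goes through verbatim.
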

As before, it suffices to prove the error bound on $v^n$ given by \eqref{eq:v_flow2} as follows.
\begin{theorem}
\label{thm:v2}
Assume that the exact solution of the quadratic NLSE \eqref{eq:QNLSE_v} satisfies $v(t) \in H^{r+2}$ for $0 \leq t\leq T/\eps$. Then there exists a constant $\tau_0>0$ such that for $0 < \tau \leq \tau_0$, we have the following error bound
\begin{equation}
\left\|v(t_n) - v^n\right\|_r \lesssim \eps\tau^2, \quad 0 \leq n \leq \frac{T/\eps}{\tau}.
\label{eq:Q2_bound}
\end{equation}
\end{theorem}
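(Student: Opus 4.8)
The plan is to reproduce the three-step argument of Theorem~\ref{thm:v1} — a stability estimate, a local truncation error estimate, and discrete Gronwall — but now with a \emph{cubic} local error, i.e.\ of order $\eps^2\tau^3$, which, summed against a stability constant $1+O(\eps\tau)$ over the $(T/\eps)/\tau$ steps, yields the global bound $\eps\tau^2$. Since the scheme \eqref{eq:v_flow2} is \emph{implicit} in $v^{n+1}$, two preliminary points are needed: (i) for $\eps\tau\le\tau_0$ small and $v^n$ in a fixed ball of $H^r$, the one-step map $v^{n+1}=\Phi^\tau_{t_n}(v^n)$ is well defined by Banach's fixed-point theorem — the bilinear estimate \eqref{eq:bi} and the boundedness of $\partial^{-1}_x$ and $e^{it\partial^2_x}$ on $H^r$ make the right-hand side of \eqref{eq:v_flow2} an $O(\eps\tau)$-contraction in $v^{n+1}$ on that ball, with $\|v^{n+1}\|_r\le(1+O(\eps\tau))\|v^n\|_r$, and a routine bootstrap using the error estimate keeps $\|v^n\|_r$ uniformly bounded for $0\le t_n\le T/\eps$; (ii) the stability estimate $\|\Phi^\tau_{t_n}(f)-\Phi^\tau_{t_n}(g)\|_r\le(1+\eps L\tau)\|f-g\|_r$, with $L$ depending only on the $H^r$-norms of the arguments and their images, is then obtained as in Lemma~\ref{lem:stability}, the implicit quadratic term being moved to the left-hand side and absorbed for $\tau$ small.

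The crux is the local error bound $\|\phi^\tau(v(t_n))-\Phi^\tau_{t_n}(v(t_n))\|_r\lesssim \eps^2\tau^3$, with constant depending on $\sup_{0\le t\le T/\eps}\|v(t)\|_{r+2}$. Because \eqref{eq:v_flow2} is exactly the evaluation of the integral in the symmetrized approximation \eqref{eq:fo2}, subtracting \eqref{eq:fo2} from the exact Duhamel formula \eqref{eq:Duh} shows this error equals
\[
-i\eps\int_0^\tau e^{-i(t_n+s)\partial^2_x}\Big[\big(e^{i(t_n+s)\partial^2_x}v(t_n+s)\big)^2-\tfrac12\big(e^{i(t_n+s)\partial^2_x}v(t_n)\big)^2-\tfrac12\big(e^{i(t_n+s)\partial^2_x}v(t_n+\tau)\big)^2\Big]\,ds .
\]
Writing $\delta(s)=v(t_n+s)-v(t_n)$ and using that $v\mapsto(e^{i(t_n+s)\partial^2_x}v)^2$ is exactly quadratic, the bracket equals $2\big(e^{i(t_n+s)\partial^2_x}v(t_n)\big)\,e^{i(t_n+s)\partial^2_x}\!\big(\delta(s)-\tfrac12\delta(\tau)\big) + \big(e^{i(t_n+s)\partial^2_x}\delta(s)\big)^2-\tfrac12\big(e^{i(t_n+s)\partial^2_x}\delta(\tau)\big)^2$. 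The last two pieces are $O(\|\delta(s)\|_r^2+\|\delta(\tau)\|_r^2)=O(\eps^2\tau^2)$ in $H^r$, contributing $O(\eps^3\tau^3)$ after the outer $\eps\int_0^\tau$. In the first piece I Taylor expand $v(t_n+s)=v(t_n)+s\,\partial_t v(t_n)+\int_0^s(s-\sigma)\partial_{tt}v(t_n+\sigma)\,d\sigma$, and likewise at $s=\tau$; since differentiating the twisted equation \eqref{eq:QNLSE_v} twice produces a factor $\partial^2_x$, one has $\|\partial_{tt}v(t)\|_r\lesssim\eps\|v(t)\|_{r+2}^2$ — this is the only place the $H^{r+2}$ assumption enters — so the two integral remainders are $O(\eps\tau^2)$ in $H^r$ and again contribute $O(\eps^2\tau^3)$. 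The term that survives is
\[
-i\eps\int_0^\tau(2s-\tau)\,G(s)\,ds,\qquad G(s)=e^{-i(t_n+s)\partial^2_x}\!\left[\big(e^{i(t_n+s)\partial^2_x}v(t_n)\big)\,\big(e^{i(t_n+s)\partial^2_x}\partial_t v(t_n)\big)\right],
\]
and here the symmetry of \eqref{eq:fo2} pays off through the vanishing moment $\int_0^\tau(2s-\tau)\,ds=0$: the integral equals $-i\eps\int_0^\tau(2s-\tau)\,[G(s)-G(\tfrac\tau2)]\,ds$, which is $O\!\big(\eps\tau^3\sup_s\|G'(s)\|_r\big)=O(\eps^2\tau^3)$ because $\partial_s$ of the oscillatory factors in $G$ again produces a $\partial^2_x$ and $\|\partial_t v(t_n)\|_{r+2}\lesssim\eps\|v(t_n)\|_{r+2}^2$, so $\|G'(s)\|_r\lesssim\eps\|v(t_n)\|_{r+2}^3$. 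Summing the three contributions gives the claimed $\eps^2\tau^3$.

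Finally, with $e^k=v(t_k)-v^k$, the stability and local-error estimates give $\|e^{k+1}\|_r\le(1+\eps L\tau)\|e^k\|_r+c\,\eps^2\tau^3$; since $e^0=0$, summing over $0\le k\le (T/\eps)/\tau-1$ and invoking the discrete Gronwall inequality exactly as in the proof of Theorem~\ref{thm:v1} yields $\|e^n\|_r\le c\,T\,e^{LT}\,\eps\tau^2$, which is \eqref{eq:Q2_bound}; Theorem~\ref{thm:w2} follows since $\|w(t_n)-w^n\|_r=\|v(t_n)-v^n\|_r$. I expect the local-error step to be the main obstacle: a crude bound on the linear-in-$\delta$ term gives only $O(\eps^2\tau^2)$, i.e.\ first order, so one genuinely needs the exact symmetric structure of \eqref{eq:fo2} — encoded in $\int_0^\tau(2s-\tau)\,ds=0$ — to gain the extra power of $\tau$; at the same time the Taylor expansion must be stopped at the order compatible with $H^{r+2}$, since carrying it one step further would bring in $\partial_{ttt}v$ and hence require $H^{r+4}$.
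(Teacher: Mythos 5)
Your proposal is correct and follows the same architecture as the paper's proof --- a stability bound of the form $1+O(\eps L\tau)$ for the implicit one-step map, an $O(\eps^2\tau^3)$ local truncation error, and a discrete Gronwall summation over the $O(1/(\eps\tau))$ steps --- the only difference being that the paper states the stability and local-error results (Lemmas \ref{lem:stability2} and \ref{lem:leb2}) without proof, deferring to \cite{BYA,BMS,MS}, whereas you supply the arguments (including the well-posedness of the implicit map by fixed point). Your local-error computation, in particular the exact quadratic expansion in $\delta(s)=v(t_n+s)-v(t_n)$, the use of the vanishing moment $\int_0^\tau(2s-\tau)\,ds=0$, and the observation that differentiating the free propagators and $\partial_t v$ costs exactly two spatial derivatives, is a valid instantiation of Lemma \ref{lem:leb2} and correctly identifies where the $H^{r+2}$ hypothesis enters.
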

For simplicity, we refer to  \cite{BYA,BMS,MS} for the stability and local truncation error of the symmetric scheme.
\begin{lemma}
(Stability) Denote the map of the SLI2 \eqref{eq:v_flow2} by
\begin{equation}
v^{n+1} = \Psi^{\tau}_{t_n}(v^n),	
\end{equation}
then we have for $f, g \in H^r$,
\begin{equation}
\left\|\Psi^{\tau}_t(f)-\Psi^{\tau}_t(g)\right\|_r \leq \exp(\eps L\tau)	\left\|f - g\right\|_r,
\end{equation}
where $L$ depends on $\left\|f\right\|_r$ and $\left\|g\right\|_r$.
\label{lem:stability2}
\end{lemma}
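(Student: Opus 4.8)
The plan is to establish the stability estimate for the implicit symmetric scheme $\Psi^{\tau}_{t_n}$ defined by \eqref{eq:v_flow2}, working entirely in the twisted variable. First I would observe that \eqref{eq:v_flow2} has the structure $v^{n+1} = v^n + \eps\, G^{\tau}_{t_n}(v^n, v^{n+1})$, where the right-hand side splits into three pieces: a zeroth-mode part $-i\tau(\hat v^n_0 v^n + \hat v^{n+1}_0 v^{n+1}) + \tfrac{i\tau}{2}((\hat v^n_0)^2 + (\hat v^{n+1}_0)^2)$, and the two bilinear terms involving $\partial^{-1}_x$ evaluated at $v^n$ and at $v^{n+1}$ respectively. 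Since $e^{it\partial^2_x}$ is a linear isometry on $H^r$ for $r > 1/2$ and $\partial^{-1}_x$ is bounded on $H^r$, and using the bilinear estimate \eqref{eq:bi} together with $|\hat f_0| \lesssim \|f\|_r$, each term of $G^{\tau}_{t_n}$ is Lipschitz in both arguments on bounded sets of $H^r$, uniformly in $t_n$ and in $\tau \le \tau_0$; the $\partial^{-1}_x$ factors only help. This gives, for $f^{n}, g^{n}$ and their images $f^{n+1} = \Psi^{\tau}_{t_n}(f^n)$, $g^{n+1} = \Psi^{\tau}_{t_n}(g^n)$,
\begin{equation*}
\|f^{n+1} - g^{n+1}\|_r \le \|f^n - g^n\|_r + \eps C \tau\big(\|f^n - g^n\|_r + \|f^{n+1} - g^{n+1}\|_r\big),
\end{equation*}
where $C$ depends only on bounds for $\|f^n\|_r, \|f^{n+1}\|_r, \|g^n\|_r, \|g^{n+1}\|_r$.

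Next I would absorb the implicit term: for $\eps C\tau \le 1/2$ (which holds once $\tau_0$ is small enough) we can move $\eps C\tau\|f^{n+1}-g^{n+1}\|_r$ to the left and divide, obtaining
\begin{equation*}
\|f^{n+1} - g^{n+1}\|_r \le \frac{1 + \eps C\tau}{1 - \eps C\tau}\,\|f^n - g^n\|_r \le \exp(\eps L\tau)\,\|f^n - g^n\|_r
\end{equation*}
for a suitable $L$ (e.g. $L = 4C$), using $\tfrac{1+x}{1-x}\le e^{4x}$ for $0\le x\le 1/2$. This is exactly the claimed bound. A preliminary point that must be dispatched first is well-posedness of the implicit step: for $f \in H^r$ the map $z \mapsto f + \eps G^{\tau}_{t_n}(f, z)$ is a contraction on a ball of $H^r$ once $\eps\tau_0$ is small (again by the Lipschitz estimate above), so $\Psi^{\tau}_{t_n}$ is well defined, and a Banach fixed-point / a~priori-bound argument shows $\|f^{n+1}\|_r \le \|f^n\|_r(1 + C'\eps\tau)$, which is what lets us keep the Lipschitz constants $C$ uniform over the $O(T/(\eps\tau))$ steps.

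The main obstacle is really bookkeeping rather than a genuine difficulty: one must make sure the constant $L$ in $\exp(\eps L\tau)$ depends only on $\|f\|_r$ and $\|g\|_r$ and not on the iterate $v^{n+1}$, which requires the a~priori bound $\|f^{n+1}\|_r \lesssim \|f^n\|_r$ just mentioned so that the Lipschitz constant of the $v^{n+1}$-dependent bilinear terms is controlled by the data; and one must track that all estimates are uniform in $\eps\in(0,1]$ and in $n \le (T/\eps)/\tau$, which follows because the nonlinearity carries an explicit factor $\eps$ and the accumulated growth is $\exp(\eps L\tau)^{(T/\eps)/\tau} = \exp(LT)$, independent of $\eps$ and $\tau$. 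Since the problem statement explicitly refers to \cite{BYA,BMS,MS} for the details of stability and local error of such symmetric implicit schemes, I would cite those references for the routine parts and only spell out the structural points specific to \eqref{eq:v_flow2} — namely the boundedness of the $\partial^{-1}_x$ regularisations and the zeroth-mode terms under \eqref{eq:bi}.
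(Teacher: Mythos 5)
The paper itself offers no proof of this lemma (it simply defers to \cite{BYA,BMS,MS}), so your proposal is necessarily more detailed than the source, and its architecture is the standard one for implicit symmetric one-step maps: solvability of the implicit step by a contraction argument, a Lipschitz bound of size $O(\eps\tau)$ in both arguments, absorption of the implicit contribution for $\eps C\tau\leq 1/2$, and $\frac{1+\eps C\tau}{1-\eps C\tau}\leq e^{\eps L\tau}$. This is consistent with how the paper treats the explicit first-order analogue in Lemma \ref{lem:stability}, and your points about well-posedness of $\Psi^{\tau}_{t_n}$ and about making $L$ depend only on $\|f\|_r,\|g\|_r$ via the a priori bound $\|f^{n+1}\|_r\lesssim\|f^n\|_r$ are correct and are in fact details the paper glosses over.

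There is, however, one step whose justification as written does not deliver what you claim, and it is the crux of the lemma: you assert the inequality $\|f^{n+1}-g^{n+1}\|_r\leq\|f^n-g^n\|_r+\eps C\tau\left(\|f^n-g^n\|_r+\|f^{n+1}-g^{n+1}\|_r\right)$, but the premises you invoke (isometry of $e^{it\partial_x^2}$, boundedness of $\partial_x^{-1}$, the bilinear estimate, $|\hat f_0|\lesssim\|f\|_r$) only show that the $\frac{\eps}{4}[\cdots]$ bracket in \eqref{eq:v_flow2} is Lipschitz with constant $O(\eps)$ --- that bracket carries no explicit factor of $\tau$, and a term-by-term application of \eqref{eq:bi} to it loses the $\tau$ entirely. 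A Lipschitz constant $\eps C$ instead of $\eps C\tau$ would accumulate to $\exp(LT/\tau)$ over $n\sim T/(\eps\tau)$ steps and the lemma would fail. The missing observation is structural: by construction (cf. \eqref{eq:fo2} and the evaluation of \eqref{eq:I}), the scheme is exactly $v^{n+1}=v^n-\frac{i\eps}{2}\left(I_1^{\tau}(v^n,t_n)+I_1^{\tau}(v^{n+1},t_n)\right)$ with $I_1^{\tau}$ the Duhamel integral over an interval of length $\tau$, so the bilinear estimate applied \emph{under the integral sign} gives $\|I_1^{\tau}(f,t)-I_1^{\tau}(g,t)\|_r\leq\tau C_r\|f+g\|_r\|f-g\|_r$; this is where the factor $\tau$ comes from, not from the evaluated expression. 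With that one sentence inserted, the remainder of your argument goes through.
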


\begin{lemma}
(Local truncation error) Assume the exact flow of the quadratic NLSE \eqref{eq:QNLSE_v} is $v(t_k + t) = \phi^t(v(t_k)) \in H^{r+2}$ for $0 \leq t \leq \tau$. Then, we have the following error estimates for the local truncation error
\begin{equation*}
\left\|\phi^{\tau}(v(t_k))-\Psi^{\tau}_{t_k}(v(t_k))\right\|_r \leq c \eps^2\tau^3, 	
\end{equation*}
where $c$ depends on $\sup_{0 \leq t \leq T/\eps}\left\|\phi^t(v(t_k))\right\|_{r+2}$.
\label{lem:leb2}
\end{lemma}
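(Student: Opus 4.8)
The plan is to establish the local truncation error bound $\|\phi^{\tau}(v(t_k))-\Psi^{\tau}_{t_k}(v(t_k))\|_r \leq c\eps^2\tau^3$ by exploiting two sources of smallness: the $\eps$-factor in front of the nonlinearity (which the RCO-free Gronwall argument of Theorem~\ref{thm:v1} already shows is responsible for the improvement from $\tau$ to $\eps\tau$), and the symmetry of the scheme (which upgrades the naive $\tau^2$ local error to $\tau^3$). First I would write the exact solution via Duhamel's formula \eqref{eq:Duh} and subtract the approximation \eqref{eq:fo2}. The difference of the two integrands is
\begin{equation*}
\left(e^{i(t_n+s)\partial^2_x}v(t_n+s)\right)^2 - \tfrac12\left(e^{i(t_n+s)\partial^2_x}v(t_n)\right)^2 - \tfrac12\left(e^{i(t_n+s)\partial^2_x}v(t_n+\tau)\right)^2,
\end{equation*}
and since $e^{it\partial^2_x}$ is an isometry on $H^r$, it suffices after applying the bilinear estimate \eqref{eq:bi} to control $\|v(t_n+s) - \tfrac12 v(t_n) - \tfrac12 v(t_n+\tau)\|_r$ together with the sum $\|v(t_n+s)+ \text{(the other factor)}\|_r$, the latter being uniformly bounded by the assumed regularity $v(t)\in H^{r+2}$.

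The key observation is that $g(s) := v(t_n+s) - \tfrac12 v(t_n) - \tfrac12 v(t_n+\tau)$ vanishes at both endpoints $s=0$ and $s=\tau$ and equals (up to the midpoint-rule remainder) a quantity controlled by $\|\partial_s^2 v\|_r$ on $[t_n,t_n+\tau]$, so that $\int_0^\tau g(s)\,ds$ is $O(\tau^3 \sup\|\partial_s^2 v\|_r)$ — this is the mechanism by which symmetry produces the extra power of $\tau$. To see the extra power of $\eps$, I would use the equation \eqref{eq:QNLSE_v} itself: $\partial_s v(t_n+s) = -i\eps e^{-i(t_n+s)\partial^2_x}(e^{i(t_n+s)\partial^2_x}v(t_n+s))^2$, so each time-derivative of $v$ carries a factor $\eps$ and lands in $H^r$ by the bilinear estimate; hence $\|\partial_s^2 v\|_r \lesssim \eps$ (the $\eps^2$ term from differentiating the nonlinearity twice is even smaller). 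Combining, $\|\int_0^\tau g(s)\,ds\|_r \lesssim \eps\tau^3$, and multiplying by the $\eps$ already present in front of the Duhamel integral gives the claimed $\eps^2\tau^3$. One must be slightly careful that $\partial_s^2 v$ genuinely lands in $H^r$: differentiating the nonlinearity once produces terms like $(e^{i\cdot\partial^2_x}v)(e^{i\cdot\partial^2_x}\partial_s v)$ plus a term with $\partial^2_x$ hitting the carrier wave, which is why the hypothesis is stated in $H^{r+2}$ rather than $H^r$ — the two lost derivatives are exactly absorbed here.

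The main obstacle is the implicitness of the scheme \eqref{eq:v_flow2}: $v^{n+1}$ appears on the right-hand side, so $\Psi^{\tau}_{t_k}$ is only defined as the solution of a fixed-point equation, and one must check it is well-defined (a contraction) for $\tau\leq\tau_0$ small, which follows from the $\eps\tau$-Lipschitz structure visible in Lemma~\ref{lem:stability2}; moreover the comparison between $\phi^{\tau}(v(t_k))$ and $\Psi^{\tau}_{t_k}(v(t_k))$ must account for the fact that \eqref{eq:fo2} uses the \emph{exact} value $v(t_n+\tau)$ whereas the scheme uses the numerical fixed point, so a short bootstrap is needed to replace one by the other at the cost of $O(\eps\tau)\times(\text{local error})$, which is absorbed. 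Once Lemma~\ref{lem:leb2} is in hand, Theorem~\ref{thm:v2} follows exactly as Theorem~\ref{thm:v1} did: writing $e^k = v(t_k)-v^k$, Lemma~\ref{lem:stability2} and Lemma~\ref{lem:leb2} give $\|e^{k+1}\|_r \leq \exp(\eps L\tau)\|e^k\|_r + c\eps^2\tau^3$, and summing over $k \leq n \leq (T/\eps)/\tau$ with $e^0=0$ yields $\|e^n\|_r \lesssim \eps^2\tau^3 \cdot (T/\eps)/\tau \cdot e^{LT} = \eps\tau^2 T e^{LT}$, i.e. the bound \eqref{eq:Q2_bound}; Theorem~\ref{thm:w2} is then immediate by the isometry $\|w(t_n)-w^n\|_r = \|v(t_n)-v^n\|_r$.
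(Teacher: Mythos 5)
The paper does not actually prove this lemma: it states it and defers to the references [BYA, BMS, MS], so your argument is a self-contained substitute rather than a variant of a written proof. Your overall strategy is the right one and identifies all the correct sources of smallness: the symmetry of \eqref{eq:fo2} produces a trapezoidal-type cancellation worth one power of $\tau$; each time derivative of $v$ carries a factor $\eps$ through the equation \eqref{eq:QNLSE_v}, giving $\|\partial_t v\|_r\lesssim\eps$ and $\|\partial_t^2 v\|_r\lesssim\eps\|v\|_{r+2}^2$ (the $\partial_x^2$ from differentiating the propagators being exactly where the $H^{r+2}$ hypothesis is consumed); and the implicitness of \eqref{eq:v_flow2} is handled by a contraction/bootstrap that replaces $v(t_{n+1})$ by the fixed point at the cost of an absorbable $O(\eps\tau)\times(\text{local error})$ term. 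Your reduction of the global estimate to Lemmas \ref{lem:stability2} and \ref{lem:leb2} also matches the paper's Gronwall argument (and correctly uses $c\eps^2\tau^3$ where the paper's displayed proof of Theorem \ref{thm:v2} has a typo reading $c\eps^2\tau^2$).

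The one step that does not close as written is the claim that it suffices to bound $\int_0^\tau g(s)\,ds$ with $g(s)=v(t_n+s)-\tfrac12 v(t_n)-\tfrac12 v(t_n+\tau)$. The integrand difference is not $g(s)$ times an $s$-independent factor: writing $U_s=e^{i(t_n+s)\partial_x^2}$, one has $\bigl(U_s a\bigr)^2-\tfrac12\bigl(U_s b\bigr)^2-\tfrac12\bigl(U_s c\bigr)^2=U_s g(s)\cdot U_s\bigl(a+\tfrac{b+c}{2}\bigr)-\tfrac14\bigl(U_s(b-c)\bigr)^2$, and the whole expression sits under a further $s$-dependent propagator $e^{-i(t_n+s)\partial_x^2}$. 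Taking norms pointwise in $s$ only gives $\|g(s)\|_r=O(\eps\tau)$ and hence $O(\eps^2\tau^2)$ after integration, which is insufficient; the quadrature cancellation must be exploited inside the full integral. The fix is to Taylor-expand $g(s)=(s-\tfrac{\tau}{2})\partial_t v(t_n)+O(\eps\tau^2)$ in $H^r$, use $\int_0^\tau(s-\tfrac{\tau}{2})\,ds=0$, and control the resulting commutator by the $s$-Lipschitz constant of the co-factor and of the propagators, which is again bounded by $\|v\|_{r+2}$; the extra term $\tfrac14(U_s(b-c))^2$ is $O(\eps^2\tau^2)$ pointwise and contributes $O(\eps^3\tau^3)$ after integration and multiplication by the outer $\eps$. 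With this repair your argument yields the stated $c\,\eps^2\tau^3$ bound with $c$ depending on $\sup_t\|\phi^t(v(t_k))\|_{r+2}$, as claimed.
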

\noindent
{\emph{Proof of Theorem \ref{thm:v2}}.} According to Lemma \ref{lem:stability2} and Lemma \ref{lem:leb2}, we have
\begin{align*}
\left\|v(t_{k+1}) - v^{k+1}\right\|_r	& = \left\|\phi^{\tau}v(t_{k}) - \Phi^{\tau}_{t_k}(v^k)\right\|_r	\\
& \leq  \left\|\phi^{\tau}v(t_{k}) - \Phi^{\tau}_{t_k}(v(t_k))\right\|_r +  \left\| \Phi^{\tau}_{t_k}(v(t_k))-\Phi^{\tau}_{t_k}(v^k)\right\|_r\\
& \leq c\eps^2\tau^2+ \exp(\eps L\tau)	\left\|v(t_k) - v^k\right\|_r.
\end{align*}
Denoting $e^k = v(t_k)-v^{k}$, we find
\begin{equation}
\|e^{k+1}\|_r - \|e^k\|_r \leq c\eps^2\tau^2+\left(\exp(\eps L\tau)-1\right)\|e^k\|_r.
\end{equation}
Summing above inequality for $k =0, 1, \ldots, n$, we obtain
\begin{align*}
\|e^{n+1}\|_r & \leq \|e^0\|_r + \left(\exp(\eps L\tau)-1\right) \sum_{k=0}^n\|e^k\|_r + c\eps^2\tau^3(n+1)  \\
&\leq c\eps T\tau^2 +\left(\exp(\eps L\tau)-1\right)\sum_{k=0}^n\|e^k\|_r .
\end{align*}
Applying Gronwall inequality, we have
\begin{equation}
\|e^{n+1}\|_r  \leq c\eps \tau^2, \quad 0 \leq n \leq \frac{T/\eps}{\tau}-1,
\end{equation}
where $c$ is different from that in \eqref{lem:leb2} which completes the proof of the error bound \eqref{eq:Q2_bound}.

\section{Long-time error bounds for cubic NLSE}
For the first-order low-regularity scheme, the integral $I_1^{\tau}(v, t_n)$ is computed exactly with the help of the relation \eqref{eq:freq}. This idea is difficult to extend to other nonlinearities, including the cubic case, because the exact integral no longer has a representation in physical coordinates leading to prohibitively large cost in the implementation (since FFT is no longer available). For reasons that will become apparent below, the resonance-based scheme constructed in \cite{OS} however only leads to uniform error bounds of order $O(\tau)$ for the long-time dynamics up to time $T_{\eps}=T/\eps^2$ for the cubic NLSE with $O(\eps)$-initial data and $O(1)$-nonlinearity. This stands in contrast to prior work which allowed us, based on our introduction of the regularity compensation oscillation (RCO) technique, to establish an improved uniform error bound of order $O(\eps^2\tau^2)$ for the second-order time-splitting method (Strang splitting) for the time of order $O(1/\eps^2)$ even in the cubic NLSE case \cite{BCF}. In this section, combining the idea of the low-regularity exponential integrator with the RCO technique, we design new non-resonant low-regularity exponential integrators, which can not only deal with the rough initial data but also obtain improved uniform error bound up to the time of order $O(1/\eps^2)$ for the smooth initial data.

\subsection{A non-resonant first-order low-regularity integrator}\label{sec:design_non_resonant_schemes}
As before, we again consider the twisted variable $v(t) = e^{-it\partial^2_x}w(t)$ satisfying the following equation 
\begin{equation}
\left\{
\begin{aligned}
&i\partial_t v(t) = \eps^2 e^{-it\partial^2_x}\left[|e^{it\partial^2_x}v(t)|^2e^{it\partial^2_x}v(t)\right], \quad x \in \mathbb{T}, \ t>0,	\\
&v(x, 0) = \phi(x), \quad x \in \mathbb{T},
\end{aligned}\right.
\label{eq:CNLSE_v}
\end{equation}
with mild solution given by
\begin{equation}
v(t_n+\tau) = v(t_n) -i\eps^2\int^{\tau}_0 e^{-i(t_n+s)\partial^2_x}\left[|e^{i(t_n+s)\partial^2_x}v(t_n+s)|^2 e^{i(t_n+s)\partial^2_x}v(t_n+s)\right]ds.
\label{eq:Duh_2}
\end{equation}
In the numerical simulation, it remains to approximate the integral
\begin{equation}
\mathcal{I}_{{\rm cub}}^{\tau}(v, t_n) := \int^{\tau}_0 e^{-i(t_n+s)\partial^2_x}\left[|e^{i(t_n+s)\partial^2_x}v(t_n+s)|^2 e^{i(t_n+s)\partial^2_x}v(t_n+s)\right]ds.	
\end{equation}
The construction of the first-order scheme in \cite{OS} is based on the following approximation inside this integral
\begin{equation}
v(t_n + s) \approx v(t_n),	
\end{equation}
which yields, once the integral is expressed in terms of the Fourier expansion,
\begin{align*}
I_{{\rm cub},1 }^{\tau}(v, t_n) =   \sum_{l \in \mathbb{Z}}e^{ilx}\sum_{\substack{l_1, l_2, l_3 \in \mathbb{Z} \\ l = -l_1+l_2+l_3}}e^{i t_n(l^2+l_1^2-l_2^2-l_3^2)}\overline{ \hat{v}_{l_1}}  \hat{v}_{l_2}\hat{v}_{l_3}\int^{\tau}_0 e^{is(l^2+l_1^2-l_2^2-l_3^2)} ds,
\end{align*}
with $\hat{v}_{l_j} = \hat{v}_{l_j}(t_n)$ for $j = 1, 2, 3$. Then, the remaining integral is approximated by
\begin{equation}
	\int^{\tau}_0 e^{is(l^2+l_1^2-l_2^2-l_3^2)} ds \approx \int^{\tau}_0 e^{2isl_1^2} ds.\end{equation}
As shown in \cite{OS} this approximation leads to excellent low-regularity convergence properties of the resulting scheme. However, this approximation leads to artificial resonances in the zeroth mode (i.e. when $l^2+l_1^2-l_2^2-l_3^2=0$) which negatively affect the long-time behaviour of the method. As such, we introduce a new low-regularity scheme based on a special treatment of the zeroth mode. For the resonant case $l^2+l_1^2-l_2^2-l_3^2 = 0$, it can be computed exactly as
\begin{equation}
	\int^{\tau}_0 e^{is(l^2+l_1^2-l_2^2-l_3^2)} ds = \tau.
\label{eq:res_int}
\end{equation}
For the cases $l^2+l_1^2-l_2^2-l_3^2 \neq 0$, we follow the construction from \cite{OS} and approximate the exponent by the dominant quadratic term, $2l_1^2$, which we can integrate exactly,
\begin{equation}
	\int^{\tau}_0 e^{2isl_1^2} ds = \tau \varphi_1(2i\tau l_1^2).
\end{equation}
Then, we approximate the integral as
\begin{align}
I_{{\rm cub}, 1}^{\tau}(v, t_n) \approx & \  I_{{\rm num}, 1}^{\tau}(v, t_n) \nn \\
: = & \  \sum_{l \in \mathbb{Z}}e^{ikx}\sum_{\substack{l_1, l_2, l_3 \in \mathbb{Z} \\ l = -l_1+l_2+l_3 \\ l^2 + l_1^2 -l_2^2 -l_3^2 \neq 0}}e^{i t_n(l^2+l_1^2-l_2^2-l_3^2)}\overline{ \hat{v}_{l_1}}  \hat{v}_{l_2}\hat{v}_{l_3}\int^{\tau}_0 e^{2isl_1^2} ds \nn\\
& \ +  \sum_{l \in \mathbb{Z}}e^{ilx}\sum_{\substack{l_1, l_2, l_3 \in \mathbb{Z} \\ l = -l_1+l_2+l_3 \\ l^2 + l_1^2 -l_2^2 -l_3^2 = 0}}\overline{ \hat{v}_{l_1}}  \hat{v}_{l_2}\hat{v}_{l_3}\tau \nn \\
= & \  \sum_{l \in \mathbb{Z}}e^{ilx}\sum_{\substack{l_1, l_2, l_3 \in \mathbb{Z}  \\ l = -l_1+l_2+l_3}}e^{i t_n(l^2+l_1^2-l_2^2-l_3^2)}\overline{ \hat{v}_{l_1}}  \hat{v}_{l_2}\hat{v}_{l_3}\tau \varphi_1(2i\tau l_1^2) \nn \\
& +  \sum_{l \in \mathbb{Z}}e^{ilx}\sum_{\substack{l_1, l_2, l_3 \in \mathbb{Z} \\ l = -l_1+l_2+l_3 \\ l^2 + l_1^2 -l_2^2 -l_3^2 = 0}}\overline{ \hat{v}_{l_1}}  \hat{v}_{l_2}\hat{v}_{l_3}\tau(1-\varphi_1(2i\tau l_1^2)).
\end{align}
According to the relationship
\begin{equation}
l^2 + l_1^2 - l_2^2 - l_3^2 = 2l_1^2 - 2l_1(l_2+l_3) + 2l_2l_3 = 2(l-l_2)(l-l_3),	
\end{equation}
we can write the second term as
\begin{align}
& \sum_{l \in \mathbb{Z}}e^{ilx}\sum_{\substack{l_1, l_2, l_3 \in \mathbb{Z} \\ l = -l_1+l_2+l_3 \\ l^2 + l_1^2 -l_2^2 -l_3^2 = 0}}\overline{ \hat{v}_{l_1}}  \hat{v}_{l_2}\hat{v}_{l_3}\tau(1-\varphi_1(2i\tau l_1^2)) \nn\\
 = & \ 2 \sum_{l \in \mathbb{Z}}e^{ilx}\sum_{\substack{l_1, l_2, l_3 \in \mathbb{Z} \\ l = -l_1+l_2+l_3 \\ l = l_2, l_1 = l_3}}\overline{ \hat{v}_{l_1}}  \hat{v}_{l_2}\hat{v}_{l_3}\tau(1-\varphi_1(2i\tau l_1^2))  -  \sum_{l \in \mathbb{Z}}e^{ilx}\sum_{\substack{l_1, l_2, l_3 \in \mathbb{Z} \\ l = -l_1+l_2+l_3 \\ l = l_2=l_1 = l_3}}\overline{ \hat{v}_{l_1}}  \hat{v}_{l_2}\hat{v}_{l_3}\tau(1-\varphi_1(2i\tau l_1^2)) \nn\\
 = & \ 2\tau (\widehat{g(v^n)})_0 v^n - \tau h(v^n),
\end{align}
with the auxiliary functions $a$ and $b$ given by 
\begin{equation}
g(u) = u(1-\varphi_1(-2i\tau\partial^2_x)) \overline{u}, \quad (\widehat{h(u)})_l = (1-\varphi_1(2il^2\tau)) \overline{\hat{u}_l}\hat{u}_l\hat{u}_l,
\label{eq:gh}
\end{equation}
which means that all of those terms can be computed in $\mathcal{O}(N \log N)$ operations. Compared with the scheme in \cite{OS}, the new scheme contains two additional terms, i.e.,
\begin{align}
v^{n+1} = \mathcal{S}_{\tau}(v^n) :=  v^n - i\tau \eps^2 \Bigg[&\ e^{-it_n\partial_x^2}\left((e^{it_n\partial_x^2}v^n)^2(\varphi_1(-2i\tau\partial_x^2)e^{-it_n\partial_x^2}\overline{v^n})\right)\nn\\
& \ +2(\widehat{g(v^n)})_0 v^n -  h(v^n)\Bigg],
\label{eq:MLI_v}
\end{align}
 with the initial data  $v^0 = \phi(x)$. Twisting the variable back, we obtain the non-resonant first-order low-regularity integrator (NRLI1) for $w^n = e^{it_n\partial_x^2}v^n$ as
\begin{align}
w^{n+1} = \widetilde{\mathcal{S}}_{\tau}(w^n) := &\ e^{i\tau \partial_x^2}\left[w^n - i\tau \eps^2 (w^n)^2(\varphi_1(-2i\tau\partial_x^2)\overline{w^n})\right]\nn\\
&\  -  2i\eps^2\tau (\widehat{g(w^n)})_0 e^{i\tau\partial_x^2}w^n + i\eps^2 \tau e^{i\tau\partial_x^2}h(w^n),
\label{eq:NRLI1}
\end{align}
with the initial data $w^0 = \phi(x)$ and $g$, $h$ defined in \eqref{eq:gh}.

\subsection{Improved uniform error bound of the NRLI1 \eqref{eq:NRLI1}}\label{sec:improved_bound_NRL1}
In this subsection, we employ the regularity compensation oscillation (RCO) technique to derive the long-time error bound for the method NRLI1 \eqref{eq:NRLI1}.

For the cubic NLSE \eqref{eq:CNLSE2}, we assume the exact solution $w(x, t)$ up to the time  $T_{\eps} = T/\eps^2$ with $T > 0$ fixed satisfies:
\[
{\rm(A)} \qquad
 \left\|w(x, t)\right\|_{L^{\infty}\left([0, T_{\varepsilon}]; H^{m}_{\rm per}\right)} \lesssim 1, \quad m \ge 2.
\]
Then we have the following error bound of the non-resonant low-regularity scheme \eqref{eq:NRLI1} for the cubic NLSE with $O(\eps^2)$-nonlinearity and $O(1)$-initial data up to the time of order $O(1/\eps^2)$.

\begin{theorem}
\label{thm:eb_nl}
Let $w^n$ be the numerical approximation obtained from the non-resonant first-order low-regularity (NRLI1) scheme \eqref{eq:NRLI1}. Under the assumption (A), for $0 < \tau_0 \leq 1$ sufficiently small and independent of $\eps$ such that, when $0< \tau \leq \frac{\alpha\pi \tau_0^2}{ (1+\tau_0)^2}< 1$ with a constant $\alpha\in(0,1)$, the following error bound holds
\begin{equation}
\left\|w(t_n) - w^n\right\|_1 \lesssim \eps^2\tau + \tau_0^{m-1}, \quad \left\|w^n\right\|_1 \leq 1 + M, \quad  0 \leq n \leq \frac{T/\eps^2}{\tau},
\label{eq:error_nl}
\end{equation}
where $M := \|w\|_{L^{\infty}([0, T_{\varepsilon}]; H^1)}$. In particular, if the exact solution is smooth, i.e.  $w(t) \in H^{\infty}_{\rm per}$,  the $\tau_0^{m-1}$ error part would decrease exponentially and can be ignored in practical computation when $\tau_0$ is small but fixed, and thus the estimate would practically become
\begin{equation}
\left\|w(t_n) - w^n\right\|_1 \lesssim \varepsilon^2\tau.
\end{equation}
\end{theorem}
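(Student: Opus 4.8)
The plan is to follow the standard Lady Windermere's fan argument — combining a local truncation error estimate with a stability (Lipschitz) estimate and a discrete Gronwall inequality — but with the RCO-style frequency cut-off inserted so that the long-time factor of $\eps^2$ can be harvested. First I would fix a cut-off parameter $N$ with $\tau_0 \sim N^{-1}$ (to be tuned against the condition $\tau \lesssim \tau_0^2$, which is exactly the CFL-type restriction appearing in the hypothesis) and split the exact solution and the numerical solution into their projections $\Pi_N$ onto frequencies $|l|\le N$ and the high-frequency tail. The tail of a function in $H^m$ contributes at most $\|w\|_{H^m} N^{-(m-1)} \lesssim \tau_0^{m-1}$ in $H^1$, which accounts for the second term in \eqref{eq:error_nl}; so it remains to control the low-frequency part, where all Fourier sums are finite and the dangerous phases $l^2+l_1^2-l_2^2-l_3^2$ either vanish or are bounded below in modulus by $1$ (indeed by more, after using $\tau \lesssim \tau_0^2$, they stay away from integer multiples of $\pi/\tau$, which is the point of the CFL condition).

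The core is the local error analysis. Writing one step of the exact flow via Duhamel \eqref{eq:Duh_2} and subtracting one step of $\mathcal S_\tau$ \eqref{eq:MLI_v}, the local error decomposes into (i) the error from freezing $v(t_n+s)\approx v(t_n)$ inside the integral, which is $O(\eps^2\cdot \eps^2\tau^2)=O(\eps^4\tau^2)$ by the a priori bound on $\partial_t v$ from the equation itself (one extra power of $\eps^2$ from the nonlinearity); and (ii) the error from replacing $\int_0^\tau e^{is(l^2+l_1^2-l_2^2-l_3^2)}\,ds$ by $\tau\varphi_1(2i\tau l_1^2)$ on the non-resonant set, while keeping it exactly equal to $\tau$ on the resonant set. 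On the resonant set the replacement is exact, so it contributes nothing. On the non-resonant set one Taylor-expands both integrands: the difference of the two exponents is $l^2-l_1^2-l_2^2-l_3^2+2l_1^2 - 2l_1^2 = \ldots$ — more precisely the phase discrepancy is $s\bigl[(l^2+l_1^2-l_2^2-l_3^2)-2l_1^2\bigr]$ whose coefficient, using $l=-l_1+l_2+l_3$, factors and is controlled by the regularity (it costs two derivatives, hence $m\ge 2$ and the $H^1$ target). This gives a bound of the form $\eps^2\tau^2$ times norms of the low-frequency data, i.e.\ $O(\eps^2\tau^2)$ locally. Summing $T/(\eps^2\tau)$ steps yields $O(\eps^2\tau) + O(\tau_0^{m-1})$ after the Gronwall step — \emph{provided} the stability constant does not itself blow up the $\eps^2$ gain.

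For stability I would establish, exactly as in Lemma~\ref{lem:stability} but for the cubic flow $\mathcal S_\tau$, a Lipschitz bound $\|\mathcal S_\tau(f)-\mathcal S_\tau(g)\|_1 \le (1+C\eps^2\tau)\|f-g\|_1$, valid for $f,g$ in a ball of radius $1+M$ in $H^1$; the crucial point is that every nonlinear term in \eqref{eq:MLI_v} carries the prefactor $\eps^2\tau$, and the bilinear/trilinear estimate \eqref{eq:bi} together with $\|\varphi_1(2i\tau\partial_x^2)\|_{H^1\to H^1}\le 1$ keeps the Lipschitz constant of the nonlinear part $O(\eps^2\tau)$ on that ball. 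Then the discrete Gronwall inequality over $n\le T/(\eps^2\tau)$ steps multiplies the accumulated local error by $e^{C\eps^2\tau\cdot n}\le e^{CT}$, a constant independent of $\eps$ and $\tau$ — so the $\eps^2$ in the local error survives. One must run this together with an a priori/bootstrap argument keeping $\|w^n\|_1\le 1+M$: this follows because the error bound $\eps^2\tau+\tau_0^{m-1}$ is itself small once $\tau_0$ (hence $\tau$) is small enough, so $w^n$ stays in the ball, closing the induction.

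I expect the main obstacle to be the bookkeeping in step (ii) of the local error: one has to carry the frequency cut-off through the resonant/non-resonant split, verify that on the non-resonant low-frequency set the denominators $l^2+l_1^2-l_2^2-l_3^2$ (and, after the CFL condition, also $\sin$-type factors) are bounded away from zero uniformly, and check that the phase-discrepancy estimate between $e^{is(l^2+l_1^2-l_2^2-l_3^2)}$ and $e^{2isl_1^2}$ really does cost only the two derivatives that the hypothesis $m\ge 2$ supplies — with the extra $\eps^2$ coming cleanly from the $\eps^2$ in front of the whole nonlinear integral and \emph{not} being consumed by the expansion. The special exact treatment of the zeroth mode is precisely what prevents an $O(1)$ (rather than $O(\eps^2)$) resonant contribution from polluting the long-time sum, so the verification that the resonant terms contribute \emph{nothing} to the local consistency error (being exact) is the conceptual heart of the argument.
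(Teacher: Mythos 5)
Your decomposition of the local error into the freezing error $\mathcal{W}^n=O(\eps^4\tau^2)$ and the quadrature/phase error $\mathcal{R}=O(\eps^2\tau^2\|v\|_2^3)$, the frequency cut-off with tail $O(\tau_0^{m-1})$, and the $(1+C\eps^2\tau)$-Lipschitz stability bound all match the paper's proof. However, there is a genuine gap at the step where you pass from local to global error: summing $n\sim T/(\eps^2\tau)$ local errors of size $\eps^2\tau^2$ gives $T\tau$, \emph{not} $\eps^2\tau$ --- the factor $\eps^2$ is exactly consumed by the number of steps, and neither the exactness of the resonant terms nor the $O(1)$ stability constant repairs this. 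As written, your argument only recovers the uniform bound $O(\tau)+O(\tau_0^{m-1})$, which the classical scheme of \cite{OS} already satisfies; the improved bound is precisely what cannot be obtained by estimating each $\|\mathcal{R}(v(t_k))\|_1$ separately and adding.

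The missing idea is the summation-by-parts (RCO) step applied to the \emph{accumulated} defect. The paper writes $e^{n+1}=e^0+\sum_{k}(Z^k+\mathcal{E}^k)$ and bounds the whole sum $\bigl\|\sum_{k=0}^n P_{N_0}\mathcal{R}(v(t_k))\bigr\|_1$ at once: after the frequency cut-off, each non-resonant contribution carries an oscillatory prefactor $e^{it_k\delta_{l,l_1,l_2,l_3}}$ with $\delta_{l,l_1,l_2,l_3}=l^2+l_1^2-l_2^2-l_3^2\neq0$; Abel summation in $k$ reduces the sum to the geometric sums $S_n=\sum_{k\le n}e^{it_k\delta}$, which are bounded by $C/(\tau|\delta|)$ thanks to the step-size restriction $\tau|\delta|/2\le\alpha\pi$ (this is the actual role of the condition you flagged as ``the point of the CFL condition''), together with discrete differences $c_k-c_{k+1}$ of products of Fourier coefficients, each of size $O(\eps^2\tau)$ because $\partial_t v=O(\eps^2)$ for the twisted variable. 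The factor $1/(\tau|\delta|)$ combines with the $O(\tau^2|\delta-2l_1^2|)$ quadrature defect to give $O(\tau\,|\delta-2l_1^2|/|\delta|)$ per frequency (this ratio is what costs the two derivatives, hence $m\ge2$), while $\sum_k O(\eps^2\tau)=O(n\eps^2\tau)=O(1)$; with the overall $\eps^2$ prefactor of $\mathcal{R}$ one obtains $\|\sum_k P_{N_0}\mathcal{R}(v(t_k))\|_1\lesssim\eps^2\tau$, and only then does Gronwall close the induction. Your observation that the zeroth mode must be treated exactly is the correct complementary half of this picture --- for $\delta=0$ the geometric sum grows like $n$ and no cancellation is available --- but the cancellation mechanism for the non-resonant modes is the part your proposal asserts without proving, and it does not follow from naive accumulation.
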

\begin{remark}
The non-resonant low-regularity scheme admits approximations for more general initial data than classical time-splitting methods or exponential integrator methods. Compared with the proposed low-regularity scheme in \cite{OS}, it obtains the improved uniform error bound at $O(\eps^2\tau)$ up to the time of order $O(1/\eps^2)$ for the smooth initial data.
\end{remark}

As in the quadratic case, we just need to establish the error bound of the non-resonant low-regularity scheme for the twisted variable. This means we need to prove the following statement under the assumption that the exact solution $v(x, t)$ to \eqref{eq:CNLSE_v} up to the time $T_{\eps} = T/\eps^2$ with $T > 0$ fixed satisfies:
\[
{\rm(B)} \qquad
 \left\|v(x, t)\right\|_{L^{\infty}\left([0, T_{\varepsilon}]; H^{m}_{\rm per}\right)} \lesssim 1, \quad m \ge 2.
\]

\begin{theorem}
\label{thm:eb_mv}
Let $v^n$ be the numerical approximation obtained from the non-resonant low-regularity scheme \eqref{eq:MLI_v}. Under the assumption (B), for $0 < \tau_0 \leq 1$ sufficiently small and independent of $\eps$ such that, when $0< \tau \leq  \frac{\alpha\pi\tau_0^2}{ (1+\tau_0)^2}< 1$ with a constant $\alpha\in(0,1)$, the following error bound holds
\begin{equation}
\left\|v(t_n) - v^n\right\|_1 \lesssim \eps^2\tau + \tau_0^{m-1}, \quad \left\|v^n\right\|_1 \leq 1 + M, \quad  0 \leq n \leq \frac{T/\varepsilon^2}{\tau},
\label{eq:error_v}
\end{equation}
where $M := \|v\|_{L^{\infty}([0, T_{\varepsilon}]; H^1)}$. In particular, if the exact solution is smooth, i.e.  $v(x, t) \in H^{\infty}_{\rm per}$,  the $\tau_0^{m-1}$ error part would decrease exponentially and can be ignored in practical computation when $\tau_0$ is small but fixed, and thus the estimate would practically become
\begin{equation}
\left\|v(x, t_n) - v^n\right\|_1 \lesssim \varepsilon^2\tau.
\end{equation}
\end{theorem}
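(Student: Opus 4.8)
The plan is to combine the one-step error/stability machinery used for the quadratic schemes with the RCO mechanism, so that the whole proof reduces to a \emph{uniform-in-$n$} bound on the accumulated local truncation error. Write $\mathcal{S}_\tau$ for the numerical flow \eqref{eq:MLI_v} and $e^n:=v(t_n)-v^n$. The mild formula \eqref{eq:Duh_2} and the scheme give the recursion $e^{n+1}=e^n+\mathcal{E}^n+\mathcal{D}^n$, where $\mathcal{E}^n:=\big(v(t_{n+1})-v(t_n)\big)-\big(\mathcal{S}_\tau(v(t_n))-v(t_n)\big)$ is the local error fed with the exact solution and $\mathcal{D}^n:=\big(\mathcal{S}_\tau(v(t_n))-\mathcal{S}_\tau(v^n)\big)-e^n$. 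As in Lemma~\ref{lem:stability}, the structure of \eqref{eq:MLI_v} --- the bilinear estimate \eqref{eq:bi}, the isometry of $e^{\pm it\partial_x^2}$ on $H^1$, the uniform bound $\|\varphi_1(2i\tau\partial_x^2)\|_{H^1\to H^1}\le 1$ (since $|\varphi_1(iy)|\le 1$ for $y\in\mathbb{R}$), and the trivial estimates for $g,h$ from \eqref{eq:gh} --- gives $\|\mathcal{D}^n\|_1\le C\eps^2\tau\|e^n\|_1$ as long as $\|v(t_n)\|_1,\|v^n\|_1\le 1+M$. Summing, using $e^0=0$, and applying discrete Gronwall yields $\|e^n\|_1\le e^{CT}\sup_{1\le j\le n}\big\|\sum_{k=0}^{j-1}\mathcal{E}^k\big\|_1$ for $n\le \frac{T/\eps^2}{\tau}$; the bound $\|v^n\|_1\le 1+M$ is then propagated by the standard bootstrap once the right-hand side is shown to be $\lesssim\eps^2\tau+\tau_0^{m-1}$, which is $\le 1$ for $\eps,\tau_0$ small. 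It thus remains to prove $\big\|\sum_{k=0}^{n-1}\mathcal{E}^k\big\|_1\lesssim\eps^2\tau+\tau_0^{m-1}$ uniformly in $n\le\frac{T/\eps^2}{\tau}$.

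I would split $\mathcal{E}^k=\mathcal{E}^k_{\mathrm{sol}}+\mathcal{E}^k_{\mathrm{ph}}$: the first term is the error of freezing $v(t_k+s)\approx v(t_k)$ inside $\mathcal{I}_{\mathrm{cub}}^\tau$, and the second is --- on the non-resonant modes $\omega:=l^2+l_1^2-l_2^2-l_3^2\ne0$ only --- the error of replacing $\int_0^\tau e^{is\omega}\,ds$ by $\int_0^\tau e^{2isl_1^2}\,ds$. Here it is crucial that \eqref{eq:MLI_v} integrates the zeroth mode ($\omega=0$) exactly, so no phase error is produced there. For $\mathcal{E}^k_{\mathrm{sol}}$, the identity $\partial_t v=-i\eps^2e^{-it\partial_x^2}\big(|e^{it\partial_x^2}v|^2e^{it\partial_x^2}v\big)$ from \eqref{eq:CNLSE_v} and \eqref{eq:bi} give $\|v(t_k+s)-v(t_k)\|_1\lesssim\eps^2 s$, and, using \eqref{eq:bi} once more on the difference of the cubic terms, $\|\mathcal{E}^k_{\mathrm{sol}}\|_1\lesssim\eps^4\tau^2$; hence $\big\|\sum_{k=0}^{n-1}\mathcal{E}^k_{\mathrm{sol}}\big\|_1\le\sum_k\|\mathcal{E}^k_{\mathrm{sol}}\|_1\lesssim\eps^4\tau^2\cdot\frac{T/\eps^2}{\tau}=\eps^2 T\tau$, with no cancellation required.

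The core is the RCO estimate of $\sum_k\mathcal{E}^k_{\mathrm{ph}}$. Introduce the cutoff $N_0\asymp1/\tau_0$ and split $\mathcal{E}^k_{\mathrm{ph}}=\mathcal{E}^k_{\mathrm{ph},<}+\mathcal{E}^k_{\mathrm{ph},>}$ according to whether all of $|l_1|,|l_2|,|l_3|\le N_0$ or not. On the high part, $\big|\int_0^\tau(e^{is\omega}-e^{2isl_1^2})\,ds\big|\le2\tau$ together with the tail bound $\|v-\Pi_{\le N_0}v\|_1\le N_0^{-(m-1)}\|v\|_m\lesssim\tau_0^{m-1}$ and \eqref{eq:bi} give a per-step contribution $\lesssim\eps^2\tau\,\tau_0^{m-1}$, hence $\lesssim\tau_0^{m-1}$ after summing $\frac{T/\eps^2}{\tau}$ steps. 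On the low part I would exchange the order of summation, so that $\sum_{k=0}^{n-1}\mathcal{E}^k_{\mathrm{ph},<}$ involves, for each admissible triple, the oscillatory sum $\sum_{k=0}^{n-1}e^{ik\tau\omega}\,\overline{\hat v_{l_1}(t_k)}\hat v_{l_2}(t_k)\hat v_{l_3}(t_k)$. The cutoff $N_0$ is chosen --- and this is exactly what the hypothesis $\tau\le\alpha\pi\tau_0^2/(1+\tau_0)^2$ secures --- so that $0<\tau|\omega|\le2\pi\alpha$ for every low non-resonant $\omega$, whence $|e^{i\tau\omega}-1|\gtrsim_\alpha\tau|\omega|\ge\tau$; a summation by parts in $k$, in which $\hat v_{l_j}(t_{k+1})-\hat v_{l_j}(t_k)$ is controlled by the slow variation $\|\partial_t v\|_m\lesssim\eps^2$ of assumption (B), bounds the oscillatory sum by $\lesssim(1+T)\,|e^{i\tau\omega}-1|^{-1}$ times $H^m$-bounded coefficient products. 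Combining with $\big|\int_0^\tau(e^{is\omega}-e^{2isl_1^2})\,ds\big|\le\frac{\tau^2}{2}|\omega-2l_1^2|$ (the non-resonant integral bound underlying the low-regularity analysis of \cite{OS}) leaves an effective multiplier $\tau\,|\omega-2l_1^2|/|\omega|$, and one is left to bound a weighted trilinear frequency sum by $\lesssim\tau\,\|v\|_2^3$. Since $\omega=2(l_1-l_2)(l_1-l_3)$ and $\omega-2l_1^2=-2(l_1l_2+l_1l_3-l_2l_3)$, on the generic frequencies this multiplier is $O(\langle l_1\rangle)$ and \eqref{eq:bi}, applied with one extra derivative on a single factor, closes the estimate --- consistent with $m\ge2$; on the sparse near-diagonal set $\{|\omega|\le C\}$, where it can reach $O(l_1^2)$, one argues instead by Cauchy--Schwarz, exploiting the extra decay then forced on $\hat v_{l_2}$ or $\hat v_{l_3}$ together with $\sup_l\langle l\rangle|\hat v_l|\lesssim\|v\|_2<\infty$. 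This gives $\big\|\sum_k\mathcal{E}^k_{\mathrm{ph},<}\big\|_1\lesssim\eps^2\tau$.

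Collecting the three contributions gives $\big\|\sum_{k=0}^{n-1}\mathcal{E}^k\big\|_1\lesssim\eps^2\tau+\tau_0^{m-1}$ uniformly in $n\le\frac{T/\eps^2}{\tau}$, and the Gronwall estimate of the first step then yields \eqref{eq:error_v}, with the bootstrap closing $\|v^n\|_1\le1+M$. For smooth $v$ one may take $m$ arbitrarily large, giving the stated refinement. I expect the genuine obstacle to be the low-mode RCO step: arranging the summation by parts so that it interacts cleanly with the $\tau$-independent slow-variation bound on the exact solution, and, above all, estimating the weighted trilinear sum on the near-diagonal set with only the regularity $m\ge2$ available (the generic part being a routine application of \eqref{eq:bi}).
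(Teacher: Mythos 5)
Your proposal follows essentially the same route as the paper's proof: the same split of the local error into a freezing part ($O(\eps^4\tau^2)$ per step, summed trivially over $T/(\eps^2\tau)$ steps) and a phase part, the same frequency cutoff $N_0\asymp 1/\tau_0$ producing the $\tau_0^{m-1}$ tail, the same summation by parts with the geometric-sum bound $\vert S_n\vert\lesssim 1/(\tau\vert\delta\vert)$ secured by the hypothesis $\tau\le\alpha\pi\tau_0^2/(1+\tau_0)^2$, the same slow-variation bound $\|v(t_{k+1})-v(t_k)\|_m\lesssim\eps^2\tau$, and the same effective multiplier $\tau\vert\delta-2l_1^2\vert/\vert\delta\vert$ closed by a trilinear (auxiliary-function) estimate. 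The one place you diverge --- a separate Cauchy--Schwarz argument on the near-diagonal set $\{\vert\delta\vert\le C\}$ --- is unnecessary: since $\delta=2(l-l_2)(l-l_3)$, smallness of $\vert\delta\vert$ forces $l_2,l_3$ (and hence $l_1=-l+l_2+l_3$) to lie within $O(1)$ of $l$, so the apparent $O(l_1^2)$ loss is really $O((1+\vert l_2\vert)(1+\vert l_3\vert))$ and the paper's single inequality \eqref{eq:mlbd} covers all non-resonant triples at once with only $m\ge2$.
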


We begin with the following results for the local truncation error for the non-resonant first-order low-regularity scheme \eqref{eq:MLI_v}.
\begin{lemma}
 The local truncation error of the non-resonant first-order low-regularity scheme \eqref{eq:MLI_v} for the cubic NLSE \eqref{eq:CNLSE_v} can be written as $(0 \leq n \leq \frac{T/\eps^2}{\tau}-1)$
\begin{equation}\label{eq:local-nonlinear}
\mathcal{E}^{n} := \mathcal{S}_{\tau}(v(t_n)) - v(t_{n+1}) = \mathcal{R}(v(t_n)) + \mathcal{W}^n
\end{equation}
with
\begin{equation}
 \mathcal{R}(v(t_n)) := i\eps^2\left( I_{{\rm num},1}^{\tau}(v(t_n)) - \mathcal{I}_{{\rm cub}}^{\tau}(v, t_n)\right).
\end{equation}
Under the assumption (B), for $0 < \eps \leq 1$, we have the error bound
\begin{equation}
\left\| \mathcal{R}(v(t_n))\right\|_1 \lesssim \eps^2 \tau^2\|v(t_n)\|^3_2, \quad \left\| \mathcal{W}^n\right\|_1 \lesssim \eps^4 \tau^2.
\label{eq:local_nl}
\end{equation}
\label{lemma_nl}
\end{lemma}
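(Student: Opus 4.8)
The plan is to split the local error into the quadrature error $\mathcal{R}(v(t_n))$, which compares the approximated integral $I_{{\rm num},1}^{\tau}$ with the exact integral $\mathcal{I}_{{\rm cub}}^{\tau}$ (both evaluated at the frozen argument $v(t_n)$), and the remainder $\mathcal{W}^n$, which accounts for the freezing error $v(t_n+s)\approx v(t_n)$ inside Duhamel's formula \eqref{eq:Duh_2}. So I would first write
$\mathcal{S}_\tau(v(t_n))-v(t_{n+1}) = i\eps^2\bigl(I_{{\rm num},1}^{\tau}(v(t_n))-I_{{\rm cub},1}^{\tau}(v(t_n))\bigr) + i\eps^2\bigl(I_{{\rm cub},1}^{\tau}(v(t_n))-\mathcal{I}_{{\rm cub}}^{\tau}(v,t_n)\bigr)$,
and identify the first bracket (plus the part of the second bracket coming purely from the frozen integrand) with $\mathcal{R}(v(t_n))$, leaving $\mathcal{W}^n$ as the genuine time-dependence contribution. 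The key structural fact is that $I_{{\rm num},1}^{\tau}$ and $I_{{\rm cub},1}^{\tau}$ differ only through the replacement of $\int_0^\tau e^{is(l^2+l_1^2-l_2^2-l_3^2)}\,ds$ by $\int_0^\tau e^{2isl_1^2}\,ds$ in the non-resonant modes (the resonant modes, $l^2+l_1^2-l_2^2-l_3^2=0$, are treated exactly in both the numerator and via \eqref{eq:res_int}, so they cancel).

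For the bound on $\mathcal{R}(v(t_n))$, I would estimate the per-mode quadrature discrepancy. Since $(l^2+l_1^2-l_2^2-l_3^2)-2l_1^2 = -2l_1(l_2+l_3)+2l_2l_3 = 2(l-l_2)(l-l_3)$ (using $l=-l_1+l_2+l_3$), the difference of the two integrals is $\int_0^\tau \bigl(e^{is(l^2+l_1^2-l_2^2-l_3^2)}-e^{2isl_1^2}\bigr)\,ds$, whose modulus is bounded by $\tfrac12\tau^2\,|l^2+l_1^2-l_2^2-l_3^2-2l_1^2| = \tau^2|(l-l_2)(l-l_3)|$ via the elementary inequality $|e^{ia}-e^{ib}|\le|a-b|$ and then integrating. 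The factor $|(l-l_2)(l-l_3)| = |l_1-l_3||l_1-l_2|$ (again using the index relation) is precisely what can be absorbed into two extra derivatives: writing everything in Fourier and using $\langle l_1-l_3\rangle\langle l_1-l_2\rangle \lesssim \langle l_1\rangle^2\langle l_2\rangle^2\langle l_3\rangle^2$ (or distributing the weights more carefully), one reduces $\|\mathcal{R}(v(t_n))\|_1$ to a trilinear estimate controlled by $\|v(t_n)\|_2^3$; the Sobolev weight $\langle l\rangle$ on the output in $H^1$ is dominated by $\langle l_1\rangle\langle l_2\rangle\langle l_3\rangle$ on the inputs, and the bilinear/trilinear algebra property \eqref{eq:bi} (or a direct Cauchy–Schwarz over the convolution constraint) closes it. This gives $\|\mathcal{R}(v(t_n))\|_1 \lesssim \eps^2\tau^2\|v(t_n)\|_2^3$.

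For $\mathcal{W}^n$, the point is that replacing $v(t_n+s)$ by $v(t_n)$ in \eqref{eq:Duh_2} produces an error of the form $-i\eps^2\int_0^\tau e^{-i(t_n+s)\partial_x^2}\bigl[\,|e^{i(t_n+s)\partial_x^2}v(t_n+s)|^2 e^{i(t_n+s)\partial_x^2}v(t_n+s) - (\text{same with }v(t_n))\,\bigr]\,ds$, and the bracket is a difference of cubic terms, hence bounded (using \eqref{eq:bi} and the isometry of $e^{it\partial_x^2}$) by $\lesssim \|v(t_n+s)-v(t_n)\|_1(\|v(t_n+s)\|_1^2+\|v(t_n)\|_1^2)$. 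From \eqref{eq:CNLSE_v} itself, $\|\partial_t v\|_1 \lesssim \eps^2\|v\|_1^3$, so $\|v(t_n+s)-v(t_n)\|_1 \lesssim \eps^2 s \sup\|v\|_1^3$; integrating the extra $s$ over $[0,\tau]$ yields an overall $\eps^2\cdot\eps^2\tau^2 = \eps^4\tau^2$. Hence $\|\mathcal{W}^n\|_1 \lesssim \eps^4\tau^2$, as claimed. The main obstacle is the first estimate: one must verify that the factor $|(l-l_2)(l-l_3)|$ arising from the phase discrepancy can genuinely be distributed as two Sobolev weights among the three inputs without needing more than $H^2$ — this is where the algebra identity $l^2+l_1^2-l_2^2-l_3^2-2l_1^2 = 2(l-l_2)(l-l_3)$ and careful bookkeeping of which index carries which weight are essential, and it is the reason the scheme needs $m\ge2$ rather than the $m\ge1$ one might naively hope for.
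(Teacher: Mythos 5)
Your decomposition and both estimates follow essentially the same route as the paper: the freezing error $\mathcal{W}^n$ is bounded by $\eps^4\tau^2$ exactly as in the paper (via $\|v(t_n+s)-v(t_n)\|_1\lesssim \eps^2 s$ from the twisted equation), and the quadrature error is bounded by extracting the phase discrepancy from $\left|e^{is\delta}-e^{2isl_1^2}\right|\lesssim s\,|\delta-2l_1^2|$, distributing the resulting frequency weight over the three factors, and closing with the algebra property --- the paper does the same, introducing the auxiliary function with Fourier coefficients $(1+|l|)|\hat v_l|$ and bounding $\|\mathcal{R}\|_1\lesssim\eps^2\tau^2\|g^3\|_1^{1/3\cdot 3}\lesssim\eps^2\tau^2\|v\|_2^3$.

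One algebraic slip to correct: you write $\delta-2l_1^2=-2l_1(l_2+l_3)+2l_2l_3=2(l-l_2)(l-l_3)$, but with $l=-l_1+l_2+l_3$ one has $2(l-l_2)(l-l_3)=\delta$ itself, so the last equality is off by $2l_1^2$. This matters for the regularity count: if you used $|(l-l_2)(l-l_3)|=|l_1-l_2||l_1-l_3|$ as the weight, the expansion contains an $l_1^2$ term that concentrates two derivatives on the $\overline{\hat v_{l_1}}$ factor and (together with the output $H^1$ weight) would force $\|v\|_3$ rather than $\|v\|_2$ on that factor. The correct weight is the middle expression $|-2l_1(l_2+l_3)+2l_2l_3|\le 2\left(|l_1l_2|+|l_1l_3|+|l_2l_3|\right)\lesssim\prod_{j=1}^3(1+|l_j|)$, which is precisely one derivative per factor and is what the paper uses; your cross-term formula already gives this, so the fix is only to drop the erroneous factored form.
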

\begin{proof} By the mild solution \eqref{eq:Duh_2}, the local truncation error can be written as
\begin{align}
\mathcal{E}^n = &\ \mathcal{S}_{\tau}(v(t_n)) + \mathcal{W}^n \nn\\
&\ - \left(v(t_n) -i\eps^2\int^{\tau}_0 e^{-i(t_n+s)\partial^2_x}\left[|e^{i(t_n+s)\partial^2_x}v(t_n)|^2 e^{i(t_n+s)\partial^2_x}v(t_n)\right]ds\right),	
\end{align}
with  $\mathcal{W}^n$ defined as
\begin{align}
 \mathcal{W}^n	=& \  i\eps^2\int^{\tau}_0 e^{-i(t_n+s)\partial^2_x}\left[|e^{i(t_n+s)\partial^2_x}v(t_n+s)|^2 e^{i(t_n+s)\partial^2_x}v(t_n+s)\right]ds \nn\\
 & \ - i\eps^2\int^{\tau}_0 e^{-i(t_n+s)\partial^2_x}\left[|e^{i(t_n+s)\partial^2_x}v(t_n)|^2 e^{i(t_n+s)\partial^2_x}v(t_n)\right]ds.
\end{align}
As $e^{it\Delta}$ is a linear isometry on $H^r$ for all $t \in \mathbb{R}$, we have
\begin{equation}
\left\|v(t_n + s) - v(t_n)\right\|_1 \leq \eps^2\int^s_0 \left\|v(t_n + \xi)\right\|_1^3 d\xi\leq \eps^2s \sup_{0\leq \xi\leq s}\left\|v(t_n + \xi)\right\|_1^3,
\end{equation}
which implies 
\begin{equation}
\left\| \mathcal{W}^n\right\|_1 \lesssim \eps^4\tau^2.	
\end{equation}
In the resonant case $l^2+l_1^2-l_2^2-l_3^2 = 0$, the integral $\int^{\tau}_0 e^{is(l^2+l_1^2-l_2^2-l_3^2)} ds$ is computed exactly. So we just need to consider the non-resonant cases $l^2+l_1^2-l_2^2-l_3^2 \neq 0$ as
\begin{align}
\mathcal{E}^{n} &=  \mathcal{R}(v(t_n))	+ \mathcal{W}^n\nn\\
& =\mathcal{W}^n+ i\eps^2 \sum_{l \in \mathbb{Z}}e^{ikx}\sum_{\substack{l_1, l_2, l_3 \in \mathbb{Z} \\ l = -l_1+l_2+l_3 \\ l^2 + l_1^2 -l_2^2 -l_3^2 \neq 0}} e^{i t_n(l^2+l_1^2-l_2^2-l_3^2)}\overline{ \hat{v}_{l_1}}  \hat{v}_{l_2}\hat{v}_{l_3}\nn\\
&\qquad \qquad\qquad\qquad\qquad \qquad \qquad \quad \int^{\tau}_0 \left(e^{2isl_1^2} -e^{is (l^2+l_1^2-l_2^2-l_3^2)} \right)ds,
\end{align}
with 
\begin{align}
&\left\|\mathcal{R}(v(t_n))\right\|^2_1 \nn \\
& = \eps^4 \sum_{l\in \mathbb{Z}}\left(1+|l|\right)^2\Bigg|\sum_{l \in \mathbb{Z}}e^{ikx}\sum_{\substack{l_1, l_2, l_3 \in \mathbb{Z} \\ l = -l_1+l_2+l_3 \\ l^2 + l_1^2 -l_2^2 -l_3^2 \neq 0}}e^{i t_n(l^2+l_1^2-l^2_2-l^2_3)}|2l_1(l_2+l_3)-2l_2l_3| \nn\\
& \qquad \qquad \qquad \qquad \quad \overline{\hat{v}_{l_1}}\hat{v}_{l_2}\hat{v}_{l_3} e^{ilx}\int^{\tau}_0 e^{2isl_1^2}\left(\frac{e^{is(-2l_1(l_2+l_3)-2l_2l_3)}-1}{s|2l_1(l_2+l_3)-2l_2l_3|}\right)s ds\Bigg|^2.
\end{align}
Since $\frac{e^{i\beta}-1}{|\beta|}$ is uniformly bounded for $\beta \in \mathbb{R}$, we obtain
\begin{align*}
&\left\|\mathcal{R}(v(t_n))\right\|^2_1 \nn\\
&\ \lesssim \eps^4 \tau^4\sum_{l \in \mathbb{Z}}	\left(1+|l|\right)^2 \Bigg(\sum_{l \in \mathbb{Z}}e^{ikx}\sum_{\substack{l_1, l_2, l_3 \in \mathbb{Z} \\ l = -l_1+l_2+l_3 \\ l^2 + l_1^2 -l_2^2 -l_3^2 \neq 0}}\left(|l_1l_2|+|l_1l_3|+|l_2l_3|\right)\hat{v}_{l_1}\hat{v}_{l_2}\hat{v}_{l_3}\Bigg)^2 \nn\\
&\ \lesssim\eps^4  \tau^4 \sum_{l\in \mathbb{Z}}	\left(1+|l|\right)^2 \Bigg(\sum_{l \in \mathbb{Z}}e^{ikx}\sum_{\substack{l_1, l_2, l_3 \in \mathbb{Z} \\ l = -l_1+l_2+l_3 \\ l^2 + l_1^2 -l_2^2 -l_3^2 \neq 0}}\left(1+|l_1|\right)\left(1+|l_2|\right)\left(1+|l_3|\right)\hat{v}_{l_1}\hat{v}_{l_2}\hat{v}_{l_3}\Bigg)^2.
\end{align*}
We introduce the auxiliary function $g(x) = \sum_{l \in \mathbb{Z}}\hat{g}_l e^{ilx}$ through its Fourier coefficients
\begin{equation}
\hat{g}_k = (1+|l|)|\hat{v}_l|.
\end{equation}
With the bilinear estimate \eqref{eq:bi}, we have
\begin{equation}
\left\|\mathcal{R}(v(t_n))\right\|_1 \lesssim \eps^2 \tau^2 \left\|g^3\right\|_1,	
\end{equation}
which implies
\begin{equation}
\left\|\mathcal{R}(v(t_n))\right\|_1 \lesssim \eps^2 \tau^2\left\|v\right\|^3_2.
\end{equation}
\end{proof}

\noindent
\emph{Proof for Theorem \ref{thm:eb_nl}.} Introducing the error function $e^n := e^n(x)$  by
\begin{equation}
e^n:= v^n - v(t_n), \quad n = 0, 1, \ldots,
\end{equation}
we apply a standard induction argument for proving the improved uniform error bound \eqref{eq:error_v}. Since $v^0 = \phi(x)$, it is obvious for $n = 0$. Assuming the error bounds \eqref{eq:error_v} hold true for all $0 \leq n \leq q \leq \frac{T/\varepsilon^2}{\tau}-1$, we are going to prove the case $n = q + 1$. For $0 \leq n \leq q$, we have
\begin{equation}
e^{n+1} =  v^{n+1} - \mathcal{S}_{\tau}(v(t_n)) + \mathcal{E}^{n}= e^n +Z^n + \mathcal{E}^{n},
 \label{eq:etg_nl}
\end{equation}
where $Z^n$ is given by 
\begin{align*}
Z^n = &\ i\eps^2 \tau \Bigg[e^{-it_n\partial_x^2}\left(\left(e^{it_n\partial_x^2}v(t_n)\right)^2\left(\varphi_1(-2i\tau\partial_x^2)e^{-it_n\partial_x^2}\overline{v(t_n)}\right)\right) \\
& \quad\qquad   - e^{-it_n\partial_x^2}\left((e^{it_n\partial_x^2}v^n)^2(\varphi_1(-2i\tau\partial_x^2)e^{-it_n\partial_x^2}\overline{v^n})\right) \nn\\
&  \quad\qquad  +2(\widehat{g(v(t_n))})_0 v(t_n) -2(\widehat{g(v^n)})_0 v^n -  h(v(t_n)) + h(v^n)\Bigg],
\end{align*}
with the bound
\begin{equation}
\left\|Z^n\right\|_1 \lesssim \eps^2 \tau  \|e^n\|_1.
\end{equation}
From \eqref{eq:etg_nl},  we obtain for $0\leq n\leq q$,
\begin{equation} 
\label{eq:n}
e^{n+1} = e^0+\sum\limits_{k = 0}^n \left(Z^k + {\mathcal{E}}^{k}\right).
\end{equation}
Since $e^0  = 0$, we get for $0 \leq n \leq q$,
\begin{equation}
\|e^{n+1}\|_1 \lesssim  \eps^2\tau^2+\varepsilon^2\tau\sum_{k=0}^n\|e^k\|_1 +\left\| \sum\limits_{k=0}^n \mathcal{R}(v(t_k))\right\|_1.
\label{eq:final}
\end{equation}
 For the twisted variable $v(t)$, we have $\|\partial_t\phi\|_{L^\infty([0,T/\eps^2];H^m)}\lesssim \eps^2$ and
\begin{equation}
\left\|v(t_n) - v(t_{n-1})\right\|_m \lesssim \eps^2 \tau, \quad 1 \leq n \leq \frac{T/\eps^2}{\tau}.
\label{eq:nl_twist}
\end{equation}
Following the RCO technique \cite{BCF}, we choose the cut-off parameter $\tau_0\in(0,1)$ and the corresponding Fourier modes $N_0=2\lceil 1/\tau_0\rceil$. Combining with \eqref{eq:final}, we have
\begin{equation}
\left\|e^{n+1}\right\|_1 \lesssim  \tau_0^{m-1} +\eps^2\tau^2+\eps^2\tau\sum_{k=0}^n\left\|e^k\right\|_1 +\|\mathcal{L}^n\|_1, 
\label{eq:final2}
\end{equation}
with
\begin{equation}
\mathcal{L}^n = \sum\limits_{k=0}^n P_{N_0}(\mathcal{R}(v(t_k))).	
\end{equation}

Define the index set
\begin{equation}
\mathcal{T}_{N_0} = \{l~|~l = -\frac{N_0}{2}, \ldots, \frac{N_0}{2}-1\},	
\end{equation}
and for $l\in\mathcal{T}_{N_0}$, define the index set $\mathcal{I}_l^{N_0}$ associated to $l$ as
\begin{equation}
\mathcal{I}_l^{N_0}=\left\{(l_1,l_2,l_3)\ \vert \ -l_1+l_2+l_3=l,\ l_1,l_2,l_3\in\mathcal{T}_{N_0}\right\}.
\end{equation}
Then, the expansion below follows
\begin{equation*}
P_{N_0}(\mathcal{R}(v(t_k))) =\sum\limits_{l\in\mathcal{T}_{N_0}}\sum\limits_{(l_1,l_2,l_3)\in\mathcal{I}_l^{N_0}}
\mathcal{G}_{k,l,l_1,l_2,l_3}(s)e^{ilx},
\end{equation*}
where the coefficients $\mathcal{G}_{k,l,l_1,l_2,l_3}(s)$ are functions of $s$ only,
\begin{equation}
\mathcal{G}_{k,l,l_1,l_2,l_3}(s) = e^{i(t_k+s)\delta_{l,l_1,l_2,l_3}}\left(\hat{v}_{l_1}(t_k)\right)^{\ast}\hat{v}_{l_2}(t_k)\hat{v}_{l_3}(t_k),
\label{eq:mGdeff}
\end{equation}
and $\delta_{l,l_1,l_2,l_3} = l^2 + l_1^2 - l_2^2  - l_3^2$.
The remainder term in \eqref{eq:final} reads
\begin{align}\label{eq:remainder-dec}
\mathcal{L}^n(x)  =  i\eps^2\sum\limits_{k=0}^n
\sum\limits_{l\in\mathcal{T}_{N_0}}\sum\limits_{(l_1,l_2,l_3)\in\mathcal{I}_l^{N_0}}\Lambda_{k,l,l_1,l_2,l_3} e^{ilx},
\end{align}
where $\Lambda_{k,l,l_1,l_2,l_3} = 0$ for $\delta_{l,l_1,l_2,l_3} = 0$ and for $\delta_{l,l_1,l_2,l_3} \neq 0$, we have
\begin{align}
\Lambda_{k,l,l_1,l_2,l_3} &= \int_0^\tau\mathcal{G}_{k,l,l_1,l_2,l_3}( s)\,d s - \int_0^\tau e^{2i(t_k+s)l_1^2} \left(\hat{v}_{l_1}(t_k)\right)^{\ast}\hat{v}_{l_2}(t_k)\hat{v}_{l_3}(t_k) \,d s\nn\\
 &= r_{l,l_1,l_2,l_3}e^{it_k\delta_{l,l_1,l_2,l_3}}c_{k,l,l_1,l_2,l_3},
\label{eq:Lambdak}
\end{align}
with coefficients  $c_{k,l,l_1,l_2,l_3}$ and $r_{l,l_1,l_2,l_3}$ given by
\begin{align}
c_{k,l,l_1,l_2,l_3}=& \ (\hat{v}_{l_1}(t_k))^{\ast}\hat{v}_{l_2}(t_k)\hat{v}_{l_3}(t_k),\label{eq:calFdef}\\
r_{l,l_1,l_2,l_3}= & \ \int_0^\tau e^{is\delta_{l,l_1,l_2,l_3}}\,d s -  \int_0^\tau e^{isl_1^2}\,d s =  O\left(\tau^2 \vert\delta_{l,l_1,l_2,l_3}- 2l_1^2\vert\right).\label{eq:rest}
\end{align}
Since $\Lambda_{k,l,l_1,l_2,l_3} = 0$ for $\delta_{l,l_1,l_2,l_3} = 0$, we only need consider the case $\delta_{l,l_1,l_2,l_3}\neq0$. First, for $l\in\mathcal{T}_{N_0}$ and $(l_1,l_2,l_3)\in\mathcal{I}_l^{N_0}$, we have
\begin{equation}
\vert\delta_{l,l_1,l_2,l_3}\vert \leq 2\delta_{N_0/2}= 2\left(\frac{N_0}{2}\right)^2 \leq \frac{2(1+\tau_0)^2}{\tau_0^2},
\end{equation}
which implies for $0< \tau \leq \frac{\alpha \pi\tau_0^2}{(1+\tau_0)^2}$ with $0<\tau_0, \alpha<1$,
\begin{equation}\label{eq:cfl}
\frac{\tau}{2}\vert \delta_{l,l_1,l_2,l_3} \vert \leq \alpha\pi.
\end{equation}
Denoting $S_{n,l,l_1,l_2,l_3}=\sum_{k=0}^ne^{it_k\delta_{l,l_1,l_2,l_3}}$ ($n\ge0$) and using summation-by-parts formula, we find from \eqref{eq:Lambdak} that
\begin{align}
\sum_{k=0}^n\Lambda_{k,l,l_1,l_2,l_3} =& \
r_{l,l_1,l_2,l_3}\sum_{k=0}^{n-1}S_{k,l,l_1,l_2,l_3} \left(c_{k,l,l_1,l_2,l_3}-c_{k+1,l,l_1,l_2,l_3}\right)\nonumber\\
&\ +S_{n,l,l_1,l_2,l_3}\, r_{l,l_1,l_2,l_3}\,c_{n,l,l_1,l_2,l_3},\label{eq:lambdasum}
\end{align}
and
\begin{align}
&c_{k,l,l_1,l_2,l_3}-c_{k+1,l,l_1,l_2,l_3}\nn\\
& = (\widehat{v}_{l_1}(t_k))^{\ast}(\widehat{v}_{l_2}(t_k)-\widehat{v}_{l_2}(t_{k+1})) \widehat{v}_{l_3}(t_k) + (\widehat{v}_{l_1}(t_k)-\widehat{v}_{l_1}(t_{k+1}))^{\ast}\widehat{v}_{l_2}(t_{k+1})\widehat{v}_{l_3}(t_k)\nn\\
&\;\;\;\;\; + (\widehat{v}_{l_1}(t_{k+1}))^{\ast}\widehat{v}_{l_2}(t_{k+1}) (\widehat{v}_{l_3}(t_k)-\widehat{v}_{l_3}(t_{k+1})),\label{eq:cksum}
\end{align}
where $c^\ast$ is the complex conjugate of $c$. We know from \eqref{eq:cfl} that
for  $C=\frac{2\alpha}{\sin(\alpha\pi)}$,
\begin{equation}\label{eq:Sbd}
\vert S_{n,l,l_1,l_2,l_3}\vert \leq \frac{1}{\vert\sin(\tau \delta_{l,l_1,l_2,l_3}/2)\vert}\leq\frac{C}{\tau\vert\delta_{l,l_1,l_2,l_3}\vert},\quad \forall n\ge0.
\end{equation}
Combining \eqref{eq:rest}, \eqref{eq:lambdasum}, \eqref{eq:cksum} and \eqref{eq:Sbd}, we have
\begin{align}
\left\vert\sum_{k=0}^n\Lambda_{k,l,l_1,l_2,l_3}\right\vert
\lesssim & \  \tau \frac{\vert \delta_{l,l_1,l_2,l_3} - 2l_1^2 \vert}{\vert \delta_{l,l_1,l_2,l_3} \vert}  \sum\limits_{k=0}^{n-1}\bigg(
\left\vert\hat{v}_{l_1}(t_k)-\hat{v}_{l_1}(t_{k+1})\right\vert\left\vert\hat{v}_{l_2}(t_k)\right\vert \left\vert\hat{v}_{l_3}(t_k)\right\vert\nn\\
&\ +\left\vert\hat{v}_{l_1}(t_{k+1})\right\vert\left\vert\hat{v}_{l_2}(t_k)-\hat{v}_{l_2}(t_{k+1})\right\vert \left\vert\hat{v}_{l_3}(t_k)\right\vert \nn\\
&\ +\left\vert\hat{v}_{l_1}(t_{k+1})\right\vert\left\vert\hat{v}_{l_2}(t_{k+1})\right\vert \left\vert\hat{v}_{l_3}(t_k)-\hat{v}_{l_3}(t_{k+1})\right\vert\bigg)\nonumber\\
&\ + \tau  \frac{\vert \delta_{l,l_1,l_2,l_3} - 2l_1^2 \vert}{\vert \delta_{l,l_1,l_2,l_3} \vert}\left\vert\hat{v}_{l_1}(t_n)\right\vert\left\vert\hat{v}_{l_2}(t_n)\right\vert \left\vert\hat{v}_{l_3}(t_n)\right\vert.
\label{eq:sumlambda}
\end{align}

Since $\delta_{l,l_1,l_2,l_3}- 2l_1^2= - 2l_1(l_2+l_3)+2l_2l_3$, we have for $l\in\mathcal{T}_{N_0}$ and $(l_1,l_2,l_3)\in\mathcal{I}_l^{N_0}$, there holds
\begin{equation}
(1+\vert l\vert) \frac{\vert \delta_{l,l_1,l_2,l_3}- 2l_1^2 \vert}{\vert \delta_{l,l_1,l_2,l_3}\vert} \lesssim \prod_{j = 1}^3 (1+ \vert l_j\vert).
\label{eq:mlbd}
\end{equation}
Based on \eqref{eq:remainder-dec}, \eqref{eq:sumlambda} and \eqref{eq:mlbd},  we have from \eqref{eq:final},
\begin{align}
&\|\mathcal{L}^n \|^2_1  \nn \\
& = \ \eps^4
\sum\limits_{l\in\mathcal{T}_{N_0}}\left(1+l^2\right) \big\vert\sum\limits_{(l_1,l_2,l_3)\in\mathcal{I}_l^{N_0}}\sum\limits_{k=0}^n\Lambda_{k,l,l_1,l_2,l_3}\big\vert^2 \nn \\
& \lesssim \ \eps^4\tau^2
\bigg\{\sum_{l\in\mathcal{T}_{N_0}}\bigg(\sum\limits_{(l_1,l_2,l_3)\in\mathcal{I}_l^{N_0}}\left\vert\hat{v}_{l_1}(t_n)\right\vert\left\vert\hat{v}_{l_2}(t_n)\right\vert \left\vert\hat{v}_{l_3}(t_n)\right\vert\prod_{j=1}^3(1+  \vert l_j\vert) \bigg)^2
\nn \\
&\quad+n \sum\limits_{k=0}^{n-1}
\sum_{l\in\mathcal{T}_{N_0}}\bigg[\bigg(\sum\limits_{(l_1,l_2,l_3)\in\mathcal{I}_l^{N_0}}
\left\vert\hat{v}_{l_1}(t_k)-\hat{v}_{l_1}(t_{k+1})\right\vert\left\vert\hat{v}_{l_2}(t_k)\right\vert \left\vert\hat{v}_{l_3}(t_k)\right\vert \prod_{j=1}^3(1+ \vert l_j\vert) \bigg)^2\nn\\
& \;\;\;\; +\bigg(\sum\limits_{(l_1,l_2,l_3)\in\mathcal{I}_l^{N_0}}
\left\vert\hat{v}_{l_1}(t_{k+1})\right\vert\left\vert\hat{v}_{l_2}(t_k)-\hat{v}_{l_2}(t_{k+1})\right\vert \left\vert \hat{v}_{l_3}(t_k)\right\vert\prod_{j=1}^3(1+ \vert l_j\vert) \bigg)^2\nn\\
& \;\;\;\; +\bigg(\sum\limits_{(l_1,l_2,l_3)\in\mathcal{I}_l^{N_0}}
\left\vert\hat{v}_{l_1}(t_{k+1})\right\vert\left\vert\hat{v}_{l_2}(t_{k+1})\right\vert \left\vert\hat{v}_{l_3}(t_k)-\hat{v}_{l_3}(t_{k+1})\right\vert\prod_{j=1}^3(1+ \vert l_j\vert) \bigg)^2\bigg]\bigg\}.
\label{eq:sumlambda-2}
\end{align}
Introducing the auxiliary function $\xi(x)=\sum_{l\in\mathbb{Z}}(1+\vert l \vert)\left\vert\hat{v}_l(t_n)\right\vert e^{ilx}$, where $\xi(x)\in H_{\rm per}^{m-1}(\Omega)$ implied by assumption (B) and $\|\xi\|_{H^s}\lesssim \|v(t_n)\|_{H^{s+1}}$. Expanding 
\begin{equation}
\vert\xi(x)\vert^2\xi(x)=\sum\limits_{l\in\mathbb{Z}}\sum\limits_{l=-l_1+l_2+l_3} \prod_{j=1}^3\left((1+\vert l_j\vert)\left\vert\hat{v}_{l_j}(t_n)\right\vert\right) e^{ilx},
\end{equation}
we get
\begin{align}
&\sum_{l\in\mathcal{T}_{N_0}} \bigg(\sum\limits_{(l_1,l_2,l_3)\in\mathcal{I}_l^{N_0}}\left\vert\hat{v}_{l_1}(t_n)\right\vert\left\vert\hat{v}_{l_2}(t_n)\right\vert \left\vert\hat{v}_{l_3}(t_n)\right\vert\prod_{j=1}^3(1+\vert l_j\vert)\bigg)^2\nn\\
& \leq \left\|\vert\xi(x)\vert^2\xi(x)\right\|^2 \lesssim \left\|\xi(x)\right\|_1^6\lesssim \left\|v(t_k)\right\|_2^6 \lesssim1.
\end{align}
Noticing \eqref{eq:nl_twist}, we can estimate each terms in \eqref{eq:sumlambda-2} accordingly as
\begin{align}
&\left\| \sum\limits_{k=0}^n P_{N_0}\mathcal{R}(v(t_k))\right\|^2_1 \nn \\
& \lesssim \eps^4\tau^2 \bigg[\left\|v(t_k)\right\|_2^6+n\sum\limits_{k=0}^{n-1}
\left\|v(t_k) -v(t_{k+1})\right\|_2^2(\left\|v(t_k)\right\|_2 + \left\|v(t_{k+1})\right\|_2)^4\bigg]\nn \\
&\lesssim  \eps^4\tau^2+n^2\eps^4\tau^2 (\eps^2\tau)^2
\lesssim \eps^4\tau^2,\quad n\leq q,
\label{eq:est-l2}
\end{align}
and \eqref{eq:final2} implies
\begin{equation}
\|e^{n+1}\|_1 \lesssim \tau_0^{m-1} +\eps^2\tau+\eps^2\tau\sum_{k=0}^n\|e^k\|_1,\quad 0\leq n\leq q. \label{eq:final3}
\end{equation}
Using discrete Gronwall's inequality, we have
\begin{equation}
\|e^{q+1}\|_1\lesssim \eps^2\tau + \tau_0^{m-1},\quad 0\leq q\leq\frac{T/\eps^2}{\tau}-1,
\end{equation}
which implies the first inequality in \eqref{eq:error_nl} at $n = q+1$. For  $0 < \tau_0 \leq 1$, when $0< \tau \leq \frac{\alpha \pi \tau_0^2}{ (1+\tau_0)^2}< 1$, the triangle inequality yields that
\begin{equation*}
\left\|v^{q+1}\right\|_1 \leq \left\|v(x, t_{q+1})\right\|_1 + \left\|e^{q+1}\right\|_1 \leq M + 1, \quad 0 \leq q \leq \frac{T/\varepsilon^2}{\tau}-1,	
\end{equation*}
which means that the induction process for \eqref{eq:error_nl} is completed.  $\hfill\Box$

\subsection{A non-resonant symmetric second-order low-regularity integrator}
In this subsection, we will design a non-resonant symmetric second-order low-regularity integrator which can deal with rough initial data and obtain the improved uniform error bound for smooth initial data as well. For the symmetric scheme, we require the map is self-adjoint \cite{BMS,CCO1}, i.e., $\widetilde{\mathcal{S}}_{\tau}=\widetilde{\mathcal{S}}_{-\tau}^{-1}$. Following the idea in \cite{HLW} we construct our symmetric second order method as a composition scheme, in the form
\begin{equation}
w^{n+1} = \widetilde{\mathcal{S}}_{-\tau/2}^{-1} \circ\widetilde{\mathcal{S}}_{\tau/2}(w^n).
\end{equation}
Recalling the first-order scheme
\begin{align}
w^{n+1} = \widetilde{\mathcal{S}}_{\tau}(w^n) := &\ e^{i\tau \partial_x^2}\left[w^n - i\tau \eps^2 (w^n)^2(\varphi_1(-2i\tau\partial_x^2)\overline{w^n})\right]\nn\\
&\  -  2i\eps^2\tau (\widehat{g(w^n)})_0 e^{i\tau\partial_x^2}w^n + i\eps^2 \tau e^{i\tau\partial_x^2}h(w^n),
\end{align}
we have the following non-resonant symmetric second-order low-regularity scheme (NRSLI2) 
\begin{align}
\label{eq:msu}
w^{n+1} =&\  \widetilde{\mathcal{M}}_{\tau}(w^n, w^{n+1}) \nn\\
:= & \  e^{i\tau \partial_x^2}\Bigg[w^n - \frac{i\tau\eps^2}{2}(w^n)^2(\varphi_1(-i\tau\partial_x^2)\overline{w^n})\Bigg]  -   \frac{i\eps^2\tau}{2}\left((w^{n+1})^2(\varphi_1(i\tau\partial_x^2)\overline{w^{n+1}})\right) \nn\\
& -  \frac{i\eps^2\tau}{2} \Bigg[2(\widehat{g(w^n)})_0 e^{i\tau\partial_x^2}w^n - e^{i\tau\partial_x^2}h(w^n) +  2 (\widehat{g(w^{n+1})})_0 w^{n+1} - h(w^{n+1})\Bigg],
\end{align}
with the initial data $w^0 = \phi(x)$ and $g$, $h$ defined in \eqref{eq:gh}.

\subsection{Improved uniform error bound of the NRSLI2 \eqref{eq:msu}}
In this subsection, we carry out the long-time error bound for the non-resonant symmetric second-order low-regularity (NRSLI2) scheme \eqref{eq:msu}. 

For the cubic NLSE \eqref{eq:CNLSE2}, we assume the exact solution $w(x, t)$ up to the time  $T_{\eps} = T/\eps^2$ with $T > 0$ fixed satisfies:
\[
{\rm(C)} \qquad
 \left\|w(x, t)\right\|_{L^{\infty}\left([0, T_{\varepsilon}]; H^{m}_{\rm per}\right)} \lesssim 1, \quad m \ge 4.
\]
Then we have the following improved uniform error bound of the NRSLI2 \eqref{eq:msu} for the cubic NLSE with $O(\eps^2)$-nonlinearity up to the times of order $O(1/\eps^2)$.

\begin{theorem}
\label{thm:eb_w2}
Let $w^n$ be the numerical approximation obtained from the non-resonant symmetric second-order low-regularity (NRSLI2) scheme \eqref{eq:msu}. Under the assumption (C),  for $0 < \tau_0 \leq 1$ sufficiently small and independent of $\eps$ such that, when $0< \tau \leq \frac{\alpha \pi \tau_0^2}{ (1+\tau_0)^2}< 1$ with a constant $\alpha\in(0,1)$, the following error bound holds
\begin{equation}
\left\|w(t_n) - w^n\right\|_1 \lesssim \eps^2\tau^2 + \tau_0^{m-1}, \quad \left\|w^n\right\|_1 \leq 1 + M, \quad  0 \leq n \leq \frac{T/\eps^2}{\tau},
\label{eq:error_nl2}
\end{equation}
where $M := \|w\|_{L^{\infty}([0, T_{\eps}]; H^1)}$. In particular, if the exact solution is smooth, i.e.  $w(t) \in H^{\infty}_{\rm per}$,  the $\tau_0^{m-1}$ error part would decrease exponentially and can be ignored in practical computation when $\tau_0$ is small but fixed, and thus the estimate would practically become
\begin{equation}
\left\|w(t_n) - w^n\right\|_1 \lesssim \eps^2\tau^2.
\end{equation}
\end{theorem}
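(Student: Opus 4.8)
The plan is to follow the same architecture as the proof of Theorem~\ref{thm:eb_nl}, reducing first to the twisted variable $v^n$ (using that $e^{it\partial_x^2}$ is an $H^r$-isometry), and then combining a local error decomposition with the RCO technique and a discrete Gronwall argument. Since the NRSLI2 \eqref{eq:msu} is defined implicitly through $\widetilde{\mathcal{M}}_\tau(w^n,w^{n+1})$, the first preliminary step is to establish that for $\tau$ small (independent of $\eps$) the fixed-point map defining $w^{n+1}$ is a contraction on a ball in $H^1$, so that $w^{n+1}$ exists, is unique, and depends Lipschitz-continuously on $w^n$; the Lipschitz constant is $1+\mathcal{O}(\eps^2\tau)$ because the nonlinearity carries the prefactor $\eps^2$, and the implicit part contributes only $\mathcal{O}(\eps^2\tau)$ to the stability constant via the bilinear estimate \eqref{eq:bi}. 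This gives the stability estimate $\|\widetilde{\mathcal{M}}_\tau(f,\cdot)-\widetilde{\mathcal{M}}_\tau(g,\cdot)\|_1\le(1+\eps^2 L\tau)\|f-g\|_1$ on the relevant ball.

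Next I would split the local truncation error as in Lemma~\ref{lemma_nl}, writing $\mathcal{E}^n = \mathcal{R}_2(v(t_n)) + \mathcal{W}_2^n$, where $\mathcal{W}_2^n$ collects the "smooth" part of the error coming from the temporal variation of $v$ (which is $\mathcal{O}(\eps^2\tau)$ on each step, hence contributes $\mathcal{O}(\eps^4\tau^3)$ locally, i.e. $\mathcal{O}(\eps^4\tau^2)$ globally after summation, using \eqref{eq:nl_twist}) and $\mathcal{R}_2(v(t_n))$ collects the genuinely oscillatory part arising from the approximations made in passing from the exact Duhamel integral to $\widetilde{\mathcal{M}}_\tau$. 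For the symmetric (midpoint-type) scheme the naive bound on $\mathcal{R}_2$ is $\mathcal{O}(\eps^2\tau^3\|v\|_4^3)$ rather than $\mathcal{O}(\eps^2\tau^2\|v\|_2^3)$ — this is why assumption (C) requires $m\ge4$ — and it has exactly the resonant structure $\Lambda_{k,l,l_1,l_2,l_3}=0$ when $\delta_{l,l_1,l_2,l_3}=0$ (because the zeroth mode is still treated exactly in \eqref{eq:msu}) and $\Lambda_{k,l,l_1,l_2,l_3}=r_{l,l_1,l_2,l_3}e^{it_k\delta}c_{k,l,l_1,l_2,l_3}$ with now $r_{l,l_1,l_2,l_3}=\mathcal{O}(\tau^3|\delta_{l,l_1,l_2,l_3}-2l_1^2|)$ (one extra power of $\tau$ from the second-order consistency of the composition). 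One then applies the CFL-type restriction \eqref{eq:cfl}, summation by parts over $k$ with $S_{n,l,l_1,l_2,l_3}$ bounded as in \eqref{eq:Sbd}, and the multiplier bound \eqref{eq:mlbd} to obtain, exactly as in \eqref{eq:est-l2}, $\big\|\sum_{k=0}^n P_{N_0}\mathcal{R}_2(v(t_k))\big\|_1\lesssim \eps^2\tau^2$ (the $n^2(\eps^2\tau)^2$ factor being absorbed since $n\tau\lesssim \eps^{-2}$), while the high-frequency tail $(1-P_{N_0})$ is controlled by $\tau_0^{m-1}$ using assumption (C).

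Assembling these pieces gives, with $e^n:=v^n-v(t_n)$ and $e^0=0$, the estimate $\|e^{n+1}\|_1\lesssim \tau_0^{m-1}+\eps^2\tau^2+\eps^2\tau\sum_{k=0}^n\|e^k\|_1$ for $0\le n\le q$ inside a standard induction (the induction hypothesis $\|v^k\|_1\le 1+M$ being what licenses the stability constant and the contraction argument at step $k+1$), and discrete Gronwall over $n\le T/(\eps^2\tau)$ then yields $\|e^{q+1}\|_1\lesssim \eps^2\tau^2+\tau_0^{m-1}$; the triangle inequality closes the induction for $\|v^{q+1}\|_1\le 1+M$. Twisting back recovers \eqref{eq:error_nl2}. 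The main obstacle I anticipate is twofold: first, verifying carefully that the symmetric composition $\widetilde{\mathcal{S}}_{-\tau/2}^{-1}\circ\widetilde{\mathcal{S}}_{\tau/2}$ really does produce the claimed extra order of consistency uniformly in $\eps$ and that its remainder $\mathcal{R}_2$ still possesses the clean resonance factorization \eqref{eq:Lambdak}–\eqref{eq:rest} with the zeroth-mode term vanishing — i.e. that symmetrization does not reintroduce spurious resonances; and second, the well-posedness/stability of the implicit step, where one must be slightly careful that the ball on which the contraction holds is the same ball $\{\|\cdot\|_1\le 1+M\}$ furnished by the induction hypothesis, so that no circularity arises. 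Once these structural points are secured, the RCO machinery and Gronwall argument run exactly as in the first-order case.
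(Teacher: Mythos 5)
Your proposal is correct and follows essentially the same route as the paper: reduction to the twisted variable, a stability/well-posedness estimate for the implicit step with Lipschitz constant $1+\mathcal{O}(\eps^2\tau)$, a local-error splitting $\widetilde{\mathcal{R}}+\widetilde{\mathcal{W}}^n$ with $\|\widetilde{\mathcal{R}}\|_1\lesssim\eps^2\tau^3\|\cdot\|_4^3$ and $\|\widetilde{\mathcal{W}}^n\|_1\lesssim\eps^4\tau^3$, the RCO summation-by-parts bound $\|\sum_k P_{N_0}\widetilde{\mathcal{R}}\|_1\lesssim\eps^2\tau^2$, and discrete Gronwall inside an induction (the paper itself only sketches these steps, deferring the local-error lemma to the cited references, so the structural caveats you flag about the symmetric composition preserving the resonance factorization are exactly the points left implicit there). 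The only slip is that the accumulated contribution of $\widetilde{\mathcal{W}}^n$ over $n\lesssim T/(\eps^2\tau)$ steps is $\mathcal{O}(\eps^2\tau^2)$, not $\mathcal{O}(\eps^4\tau^2)$, which is anyway what appears correctly in your final recursion.
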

\begin{remark}
The second-order time-splitting method requires four additional spatial derivatives to get the improved uniform error bounds, while the proposed low-regularity scheme \eqref{eq:msu} requires three additional spatial derivatives. Compared with the classical low-regularity scheme in \cite{OS}, the new non-resonant scheme obtains the improved uniform error bound at $O(\eps^2\tau^2)$ up to the time of order $O(1/\eps^2)$ for the smooth initial data.
\end{remark}

As in the proof of Theorem \ref{thm:eb_nl}, we begin with the error estimates for the local truncation error in the following lemma. The proof proceeds analogously to the estimates in \cite{BYA, BMS} and we omit the details here for brevity.
\begin{lemma}
 The local truncation error of the non-resonant symmetric second-order scheme \eqref{eq:msu} for the cubic NLSE \eqref{eq:CNLSE2} can be written as $(0 \leq n \leq \frac{T/\eps^2}{\tau}-1)$
\begin{equation}
\label{eq:local-nl}
\widetilde{\mathcal{E}}^{n} := \widetilde{\mathcal{M}}_{\tau}(w(t_n), w(t_{n+1})) - w(t_{n+1}) = \widetilde{\mathcal{R}}(w(t_n), w(t_{n+1})) + \widetilde{\mathcal{W}}^n,
\end{equation}
then under the assumption (C), for $0 < \eps \leq 1$, we have the error bound
\begin{equation}
\left\|\widetilde{\mathcal{R}}(w(t_n), w(t_{n+1}))\right\|_1 \lesssim \eps^2 \tau^3\left(\|w(t_n)\|^3_4 +\|w(t_{n+1})\|^3_4\right), \quad \left\| \widetilde{\mathcal{W}}^n\right\|_1\lesssim \eps^4 \tau^3.
\end{equation}
\end{lemma}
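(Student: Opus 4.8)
The plan is to prove the local truncation error bound for the NRSLI2 scheme \eqref{eq:msu} by reducing the analysis to the first-order building block $\widetilde{\mathcal{S}}_\tau$ whose properties were already established in Lemma~\ref{lemma_nl}. First I would exploit the composition structure $\widetilde{\mathcal{M}}_\tau=\widetilde{\mathcal{S}}_{-\tau/2}^{-1}\circ\widetilde{\mathcal{S}}_{\tau/2}$: writing $w^\ast$ for the intermediate stage, the local error decomposes into the error committed by the half-step $\widetilde{\mathcal{S}}_{\tau/2}$ in mapping $w(t_n)$ to $w(t_{n+1/2})$, plus the error of the backward half-step $\widetilde{\mathcal{S}}_{-\tau/2}$ in mapping $w(t_{n+1})$ to $w(t_{n+1/2})$, suitably propagated. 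Each of these is controlled by Lemma~\ref{lemma_nl} applied with step $\tau/2$ (and $-\tau/2$), giving contributions of size $\mathcal{O}(\eps^2\tau^2)$ from the $\mathcal{R}$-part and $\mathcal{O}(\eps^4\tau^2)$ from the $\mathcal{W}$-part at \emph{each} half-step. The gain of one order — reaching $\tau^3$ rather than $\tau^2$ — comes from the symmetry $\widetilde{\mathcal{M}}_\tau=\widetilde{\mathcal{M}}_{-\tau}^{-1}$, which forces the even-in-$\tau$ terms in the local error expansion to cancel; this is the standard mechanism for symmetric one-step methods, and I would make it rigorous by a Taylor expansion of $\widetilde{\mathcal{M}}_\tau(w(t_n),w(t_{n+1}))-w(t_{n+1})$ in $\tau$ around $\tau=0$, identifying the leading term and checking it is odd.

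Concretely, the key steps in order would be: (i) expand the exact flow via the twisted-variable Duhamel formula \eqref{eq:Duh_2} on $[t_n,t_{n+1}]$ and Taylor-expand $v(t_n+s)$ around the midpoint; (ii) expand the numerical map \eqref{eq:msu} (in the twisted variable, so working with $\mathcal{M}_\tau$ rather than $\widetilde{\mathcal{M}}_\tau$, exactly as the reduction to $v^n$ was done for NRLI1) in powers of $\tau$, using $\varphi_1(z)=1+z/2+\mathcal{O}(z^2)$ and the exactness of the zeroth-mode treatment; (iii) match terms: the $\mathcal{O}(\eps^2\tau)$ terms agree by construction, the $\mathcal{O}(\eps^2\tau^2)$ terms must be shown to agree because of the symmetric midpoint-type quadrature built into $\widetilde{\mathcal{M}}_\tau$, leaving an $\mathcal{O}(\eps^2\tau^3)$ remainder; (iv) estimate that remainder in $H^1$ by repeated use of the bilinear estimate \eqref{eq:bi} together with the frequency-counting bound $|\delta_{l,l_1,l_2,l_3}-2l_1^2|=|{-2l_1(l_2+l_3)+2l_2l_3}|\lesssim (1+|l_1|)(1+|l_2|)(1+|l_3|)$, which is what produces the three extra derivatives and hence the $\|w(t_n)\|_4^3+\|w(t_{n+1})\|_4^3$ factor; (v) separately bound $\widetilde{\mathcal{W}}^n$, the error from freezing $v(t_n+s)\approx v(t_n)$ (and $v(t_{n+1})$), which as in Lemma~\ref{lemma_nl} carries an extra factor $\eps^2$ from differentiating the $\mathcal{O}(\eps^2)$ nonlinearity, yielding $\mathcal{O}(\eps^4\tau^3)$. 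Throughout I would work with the non-resonant ($\delta\neq0$) part only, since the resonant zeroth-mode integral is evaluated exactly and contributes nothing to $\mathcal{R}$, exactly mirroring the NRLI1 argument.

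The main obstacle I anticipate is step (iii): verifying the second-order consistency, i.e. that the $\mathcal{O}(\tau^2)$ coefficient of the numerical map matches that of the exact flow. Unlike a textbook symmetric method, here the composition $\widetilde{\mathcal{S}}_{-\tau/2}^{-1}\circ\widetilde{\mathcal{S}}_{\tau/2}$ involves an implicitly defined inverse and the nonstandard $\varphi_1$-dressed nonlinearity plus the two auxiliary corrections $g,h$; one must check that the implicit and explicit halves of \eqref{eq:msu} are genuinely mirror images under $\tau\to-\tau$ and conjugation, so that the symmetry property $\widetilde{\mathcal{M}}_\tau=\widetilde{\mathcal{M}}_{-\tau}^{-1}$ truly holds and thereby kills the $\tau^2$ term. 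This is largely an algebraic bookkeeping task — tracking how $e^{i\tau\partial_x^2}$, $\varphi_1(\pm i\tau\partial_x^2)$, $\varphi_1(\pm 2i\tau\partial_x^2)$ and the conjugations distribute across the composition — but it is where errors would most easily creep in. Once symmetry is confirmed, the rest is a routine (if lengthy) repetition of the Taylor-expansion-plus-bilinear-estimate machinery already deployed for NRLI1, and the global error bound \eqref{eq:error_nl2} then follows by the same RCO argument as in the proof of Theorem~\ref{thm:eb_nl}, with the summation-by-parts on the oscillatory factors $e^{it_k\delta_{l,l_1,l_2,l_3}}$ converting the $\mathcal{O}(\eps^2\tau^3)$ local error over $\mathcal{O}(1/(\eps^2\tau))$ steps into the claimed $\mathcal{O}(\eps^2\tau^2)$ global error, plus the $\tau_0^{m-1}$ truncation tail.
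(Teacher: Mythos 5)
The paper does not actually prove this lemma: it states that ``the proof proceeds analogously to the estimates in \cite{BYA,BMS}'' and omits all details, so there is no in-paper argument to compare yours against line by line. Your outline is consistent with what those references (and the paper's own first-order Lemma~\ref{lemma_nl}) do: expand the exact Duhamel integral \eqref{eq:Duh_2} and the numerical map in powers of $\tau$, use the trapezoidal/endpoint-averaged structure to cancel the $O(\tau^2)$ term, estimate the $O(\tau^3)$ remainder via the bilinear estimate and the frequency bound on $\delta_{l,l_1,l_2,l_3}-2l_1^2$, and treat the freezing error $\widetilde{\mathcal{W}}^n$ separately (picking up the extra $\eps^2$ from $\partial_t v=O(\eps^2)$). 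Your identification of \eqref{eq:msu} as the composition $\widetilde{\mathcal{S}}_{-\tau/2}^{-1}\circ\widetilde{\mathcal{S}}_{\tau/2}$ is also correct and matches the paper's construction.

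Two caveats on your plan. First, the abstract ``symmetric $\Rightarrow$ even order'' mechanism cannot be invoked off the shelf here: the coefficients of the $\tau$-expansion of $\varphi_1(\pm i\tau\partial_x^2)$ and of the phases $e^{is\delta_{l,l_1,l_2,l_3}}$ are unbounded operators, so each power of $\tau$ gained costs spatial derivatives, and the cancellation of the $O(\tau^2)$ coefficient must be verified by explicit matching in Fourier space rather than deduced from symmetry alone. You do propose the explicit Taylor matching as a fallback, which is the right (and in fact the only workable) route; the composition-plus-Lemma~\ref{lemma_nl} framing by itself only yields $O(\eps^2\tau^2)$ per half-step and proves nothing beyond first order. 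Second, your derivative count in step (iv) is asserted rather than derived: the single frequency factor $\vert\delta-2l_1^2\vert\lesssim\prod_j(1+\vert l_j\vert)$ already accounts for the $\Vert v\Vert_2^3$ in the \emph{first}-order bound, so the jump to $\Vert w\Vert_4^3$ (three extra derivatives, consistent with assumption (C) and the remark after Theorem~\ref{thm:eb_w2}) must come from the additional second-order expansion of the phase and of $\varphi_1$, not from that bound used once. These are gaps in the sketch rather than wrong turns, but they are precisely where the omitted work lies.
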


\noindent
\emph{Proof for Theorem \ref{thm:eb_w2}.} For simplicity, we only show the difference of the proof between the first-order and second-order scheme. Introducing the error function $\widetilde{e}^n := \widetilde{e}^n(x)$  by
\begin{equation}
\widetilde{e}^n:= w^n - w(t_n), \quad n = 0, 1, \ldots,
\end{equation}
we apply a standard induction argument for proving the improved uniform error bound \eqref{eq:error_nl2}. Since $w^0 = \phi(x)$, it is obvious for $n = 0$. Assuming the error bound \eqref{eq:error_nl2} holds true for all $0 \leq n \leq q \leq \frac{T/\varepsilon^2}{\tau}-1$, we are going to prove the case $n = q + 1$. For $0 \leq n \leq q$, with similar procedure for the first-order scheme, we have 
\begin{equation}
\left\|\widetilde{e}^{n+1}\right\|_1 \lesssim  \tau_0^{m-1} +\eps^2\tau^2+\eps^2\tau\sum_{k=0}^n\left\|\widetilde{e}^k\right\|_1 +\|\widetilde{\mathcal{L}}^n\|_1, 
\label{eq:final2}
\end{equation}
with
\begin{equation}
\widetilde{\mathcal{L}}^n= \sum\limits_{k=0}^n P_{N_0}(\widetilde{\mathcal{R}}(w(t_k), w(t_{k+1}), t_k)).	
\end{equation}
For the last term in the right hand side of \eqref{eq:final2}, we have 
\begin{align*}
&\left\| \sum\limits_{k=0}^n P_{N_0}\widetilde{\mathcal{R}}(w(t_k), w(t_{k+1}))\right\|^2_1 \nn \\
& \lesssim \eps^4\tau^4 \bigg[\left\|w(t_n)\right\|_2^6+\left\|w(t_{n+1})\right\|_2^6+n\sum\limits_{k=0}^{n-1}
\left\|w(t_k) -w(t_{k+1})\right\|_2^2(\left\|w(t_k)\right\|_2 + \left\|w(t_{k+1})\right\|_2)^4\bigg]\nn \\
&\lesssim  \eps^4\tau^4 + n^2\eps^4\tau^2 (\eps^2\tau^2)^2
\lesssim \eps^4\tau^4,\quad n\leq q,
\end{align*}
and \eqref{eq:final2} implies
\begin{equation}
\|\widetilde{e}^{n+1}\|_1 \lesssim \tau_0^{m-1} +\eps^2\tau+\eps^2\tau^2\sum_{k=0}^n\|\widetilde{e}^k\|_1,\quad 0\leq n\leq q.
\end{equation}
Using discrete Gronwall's inequality, we have
\begin{equation}
\|\widetilde{e}^{q+1}\|_1 \lesssim \eps^2\tau^2 + \tau_0^{m-1},\quad 0\leq q\leq\frac{T/\eps^2}{\tau}-1,
\end{equation}
which implies the first inequality in \eqref{eq:error_nl} at $n = q+1$. For $0 < \tau_0 \leq 1$, when $0< \tau \leq \frac{\alpha \tau_0^2}{ (1+\tau_0)^2}< 1$, the triangle inequality yields that
\begin{equation*}
\left\|w^{q+1}\right\|_1 \leq \left\|w(x, t_{q+1})\right\|_1 + \left\|\widetilde{e}^{q+1}\right\|_1 \leq M + 1, \quad 0 \leq q \leq \frac{T/\eps^2}{\tau}-1,	
\end{equation*}
which means that the induction process for \eqref{eq:error_nl2} is completed.  $\hfill\Box$

\section{Numerical results}\label{sec:numerical_results}
Having understood the proofs of our main results in Theorems \ref{thm:w1}, \ref{thm:w2}, \ref{thm:eb_nl} \& \ref{thm:eb_w2}, we can now turn to some numerical results comparing the theory with practical observations. {In the following numerical experiments,} we use a standard Fourier spectral method as the basis for our spatial discretisation. In all experiments, we use  $M=2^{11}$ Fourier modes and the reference solutions are computed with the symmetric schemes introduced above with a fine time stepsize $\tau=10^{-6}$. Throughout these numerical experiments we choose initial data of the following form (cf. \cite[Section~5.1]{OS}): First we fix the number of Fourier modes, $2M$, in the spatial discretisation and take a sample of a vector of uniformly random distributed complex numbers:
\begin{align*}
	\mathbf{U}=\left(U_{-M+1},\dots, U_{M}\right), \quad U_j\sim U([0,1+i]), j=1,\dots, 2M.
\end{align*}
Our initial condition is then $u(0,\cdot)$ given by its Fourier coefficients for a specified value of $\theta$:
\begin{align}\label{eqn:form_of_random_initial_data}
	\hat{u}^{0}_{-M+j}=\langle -M+j\rangle^{-\theta} U_{-M+j},\quad j=1,\dots, 2M,\, \text{where\ }\langle m\rangle=\begin{cases}|m|,&m\neq 0,\\
		1,& m=0.
	\end{cases}
\end{align}
This initial condition is almost surely (with respect to the joint probability measure of the uniform distributions) in $H^{\theta}$.
\subsection{Long-time behaviour for the quadratic NLSE}
Let us begin with comparing the results of Theorems \ref{thm:w1} \& \ref{thm:w2} with practical experiments. In Figure \ref{fig:quadratic_err_fn_of_epsilon}, we can observe the clear linear dependence on $\varepsilon$ of the error at $t=T/\varepsilon$ as predicted in Theorems \ref{thm:w1} \& \ref{thm:w2}. In addition, Figure~\ref{fig:quadratic_err_fn_of_t} shows the good long-time behaviour of the error in the resonance-based schemes (note the initial data here is of the form \eqref{eqn:form_of_random_initial_data} with $\theta=2.0$), which appears to be well-preserved even when the error in splitting methods increases. 

Furthermore, in Figure~\ref{fig:quadratic_dependence_on_tau} we can observe that at $t=T/\varepsilon$ we have the predicted linear/quadratic dependence on the time-step $\tau$.

\begin{figure}[h!]
	\centering
		\centering
		\includegraphics[width=0.5\textwidth]{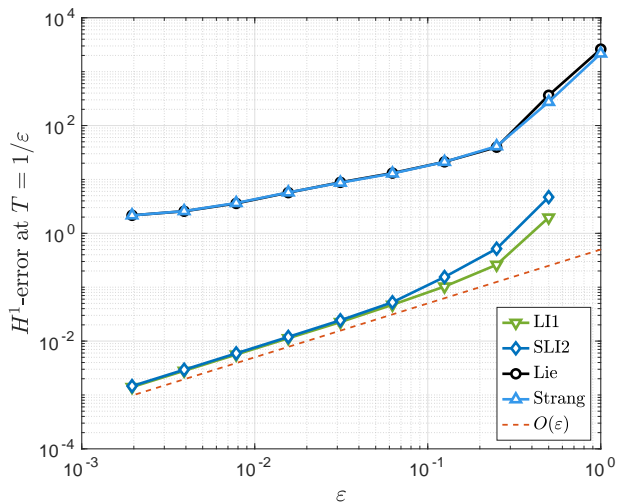}\vspace{-0.2cm}
	\caption{Long-time $H^1$-error for initial data in $H^1$ ($\theta=1$) with fixed $\tau=0.25$.}
	\label{fig:quadratic_err_fn_of_epsilon}
\end{figure}
\begin{figure}[h!]
	\centering
	\includegraphics[width=0.5\textwidth]{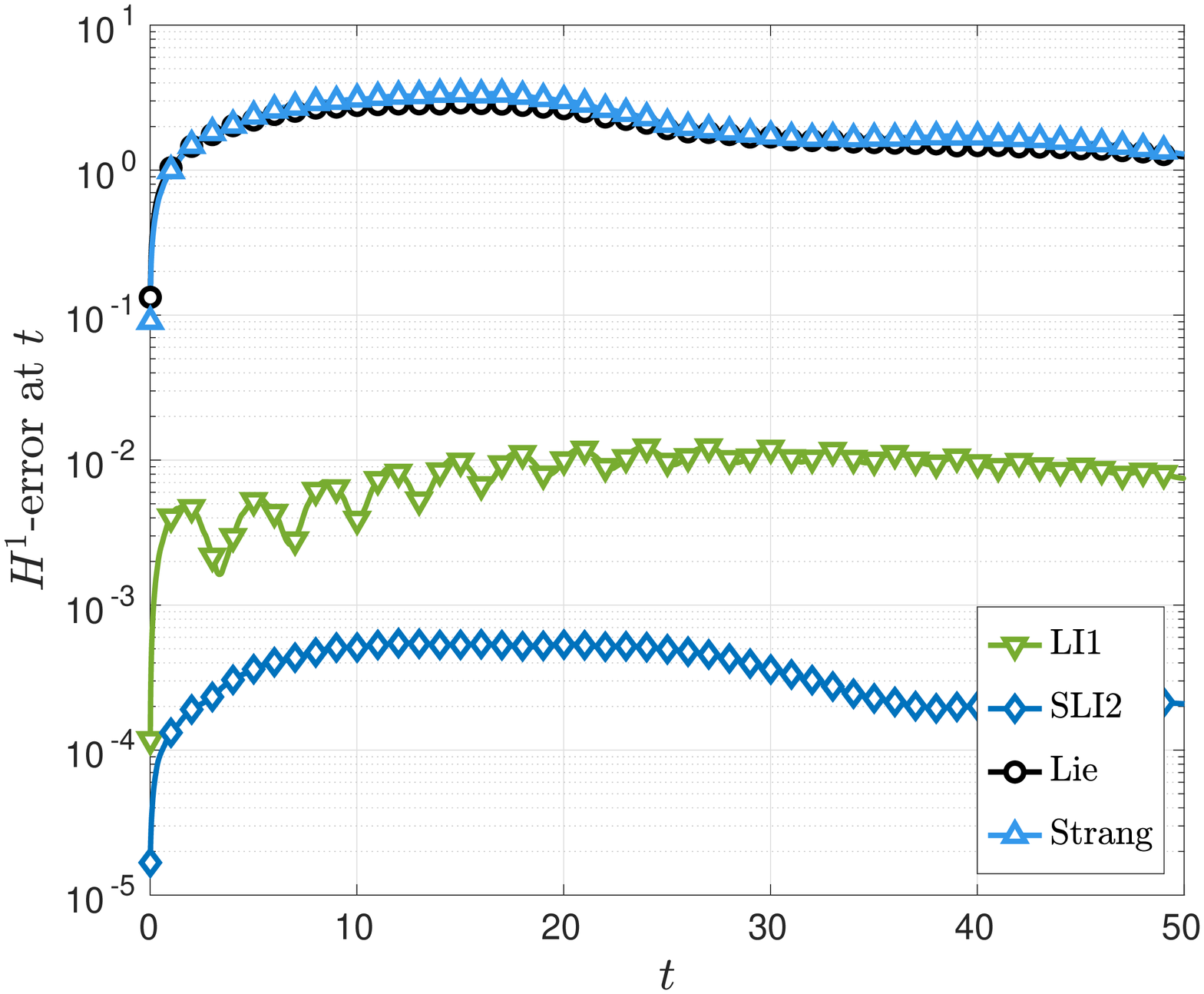}\vspace{-0.2cm}
	\caption{Error as a function of $t$ for initial data in $H^1$ ($\theta=1$) with fixed $\tau=0.01$.}
	\label{fig:quadratic_err_fn_of_t}
\end{figure}

\begin{figure}[h!]
	\centering
	\begin{subfigure}{0.495\textwidth}
		\centering
		\includegraphics[width=0.985\textwidth]{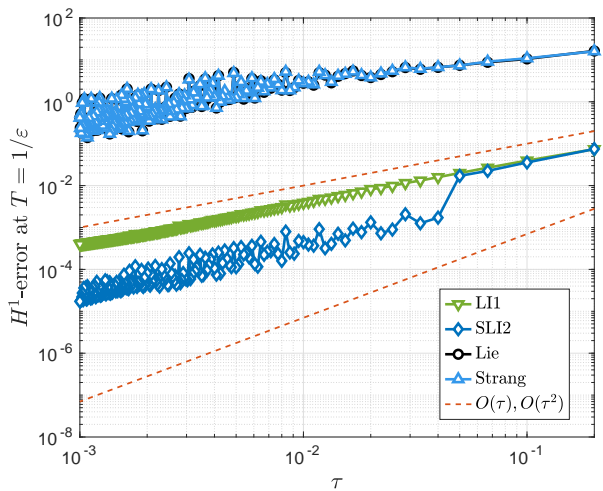}
		\caption{$H^1$ initial data ($\theta=1$).}
	\end{subfigure}

\begin{subfigure}{0.495\textwidth}
	\centering
	\includegraphics[width=0.985\textwidth]{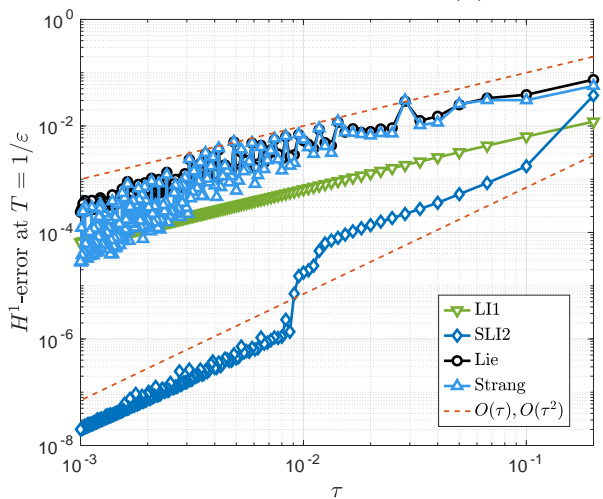}
	\caption{$H^2$ initial data ($\theta=2$).}
\end{subfigure}
	\begin{subfigure}{0.495\textwidth}
	\centering
	\includegraphics[width=0.985\textwidth]{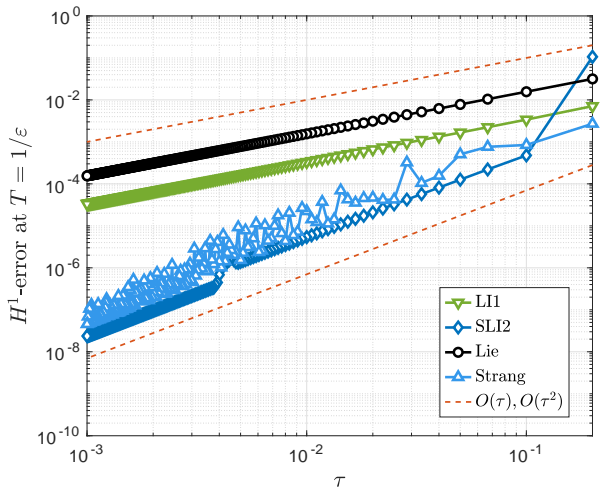}
	\caption{$H^3$ initial data ($\theta=3$).}
\end{subfigure}
	\caption{Long-time $H^1$-error as a function of $\tau$ with fixed $\varepsilon=0.1$.}
	\label{fig:quadratic_dependence_on_tau}
\end{figure}

\newpage\subsection{Long-time behaviour for the cubic NLSE}
We can perform similar numerical experiments for the case of the cubic NLSE. In the following we also include the reference solution ``Ostermann \& Schratz '18'' which is the method introduced in \cite{OS}. The central conclusion from our results in Theorems~\ref{thm:eb_nl} \& \ref{thm:eb_w2} is that our novel non-resonant low-regularity integrators have an error at $t=T/\varepsilon^2$ which decays quadratically in $\varepsilon^2$. In Figure~\ref{fig:cubic_err_fn_of_epsilon}, we observe the clear improvement over the standard low-regularity integrator first introduced in \cite{OS} confirming that the improved long-time behaviour is observed even at the level of $H^2$ initial data and thus underlying the significance of these novel schemes.
\begin{figure}[h!]
	\centering
	\includegraphics[width=0.5\textwidth]{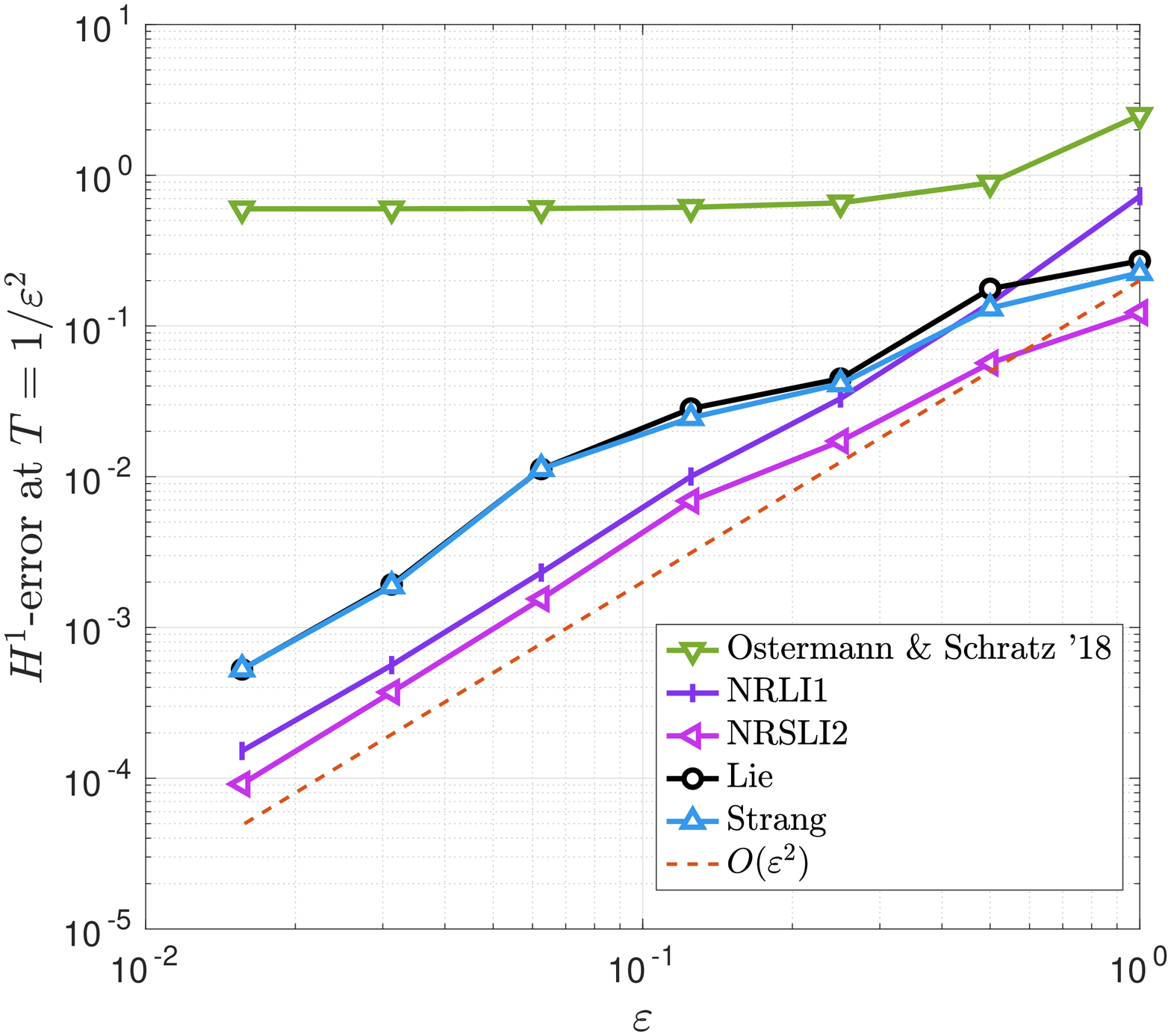}
	\caption{Long-time $H^1$-error for initial data in $H^2$ ($\theta=2	$) with fixed $\tau=0.05$.}
	\label{fig:cubic_err_fn_of_epsilon}
\end{figure}

Furthermore we see in Figure~\ref{fig:cubic_err_fn_of_t} that the new methods lead to much improved error over long-times even when compared to splitting methods.

\begin{figure}[h!]
	\centering
	\includegraphics[width=0.5\textwidth]{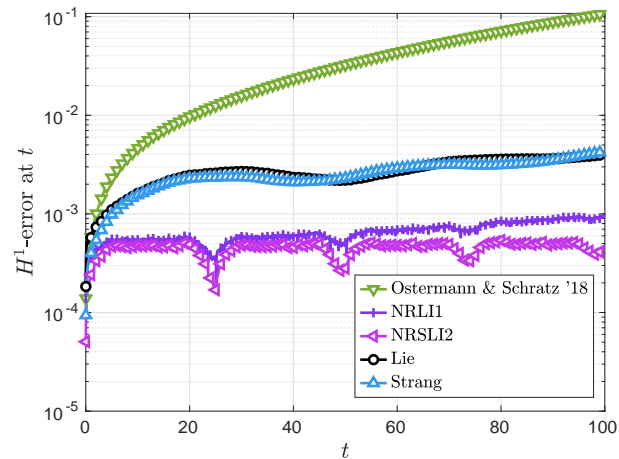}
	\caption{Error as a function of $t$ for initial data in $H^2$ ($\theta=2$) with fixed $\tau=0.01$.}
	\label{fig:cubic_err_fn_of_t}
\end{figure}

Finally, we can verify in Figure~\ref{fig:cubic_dependence_on_tau} that our methods are indeed true low-regularity integrators, which match the linear and quadratic convergence rates predicted in Theorems~\ref{thm:eb_nl} \& \ref{thm:eb_w2}.

\begin{figure}[h!]
	\centering
	\begin{subfigure}{0.495\textwidth}
		\centering
		\includegraphics[width=0.985\textwidth]{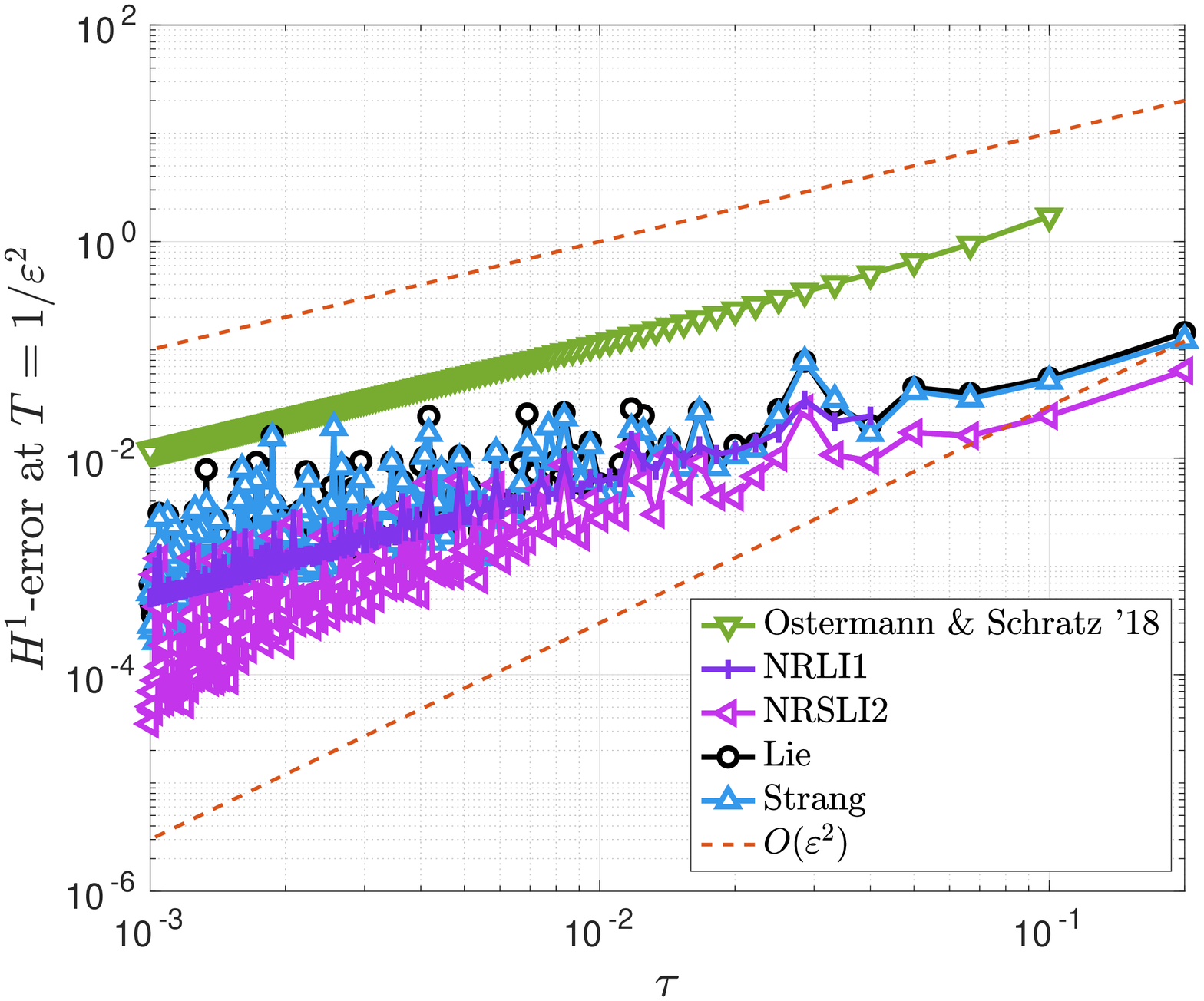}
		\caption{$H^2$ initial data ($\theta=2$).}
	\end{subfigure}

	\begin{subfigure}{0.495\textwidth}
	\centering
	\includegraphics[width=0.985\textwidth]{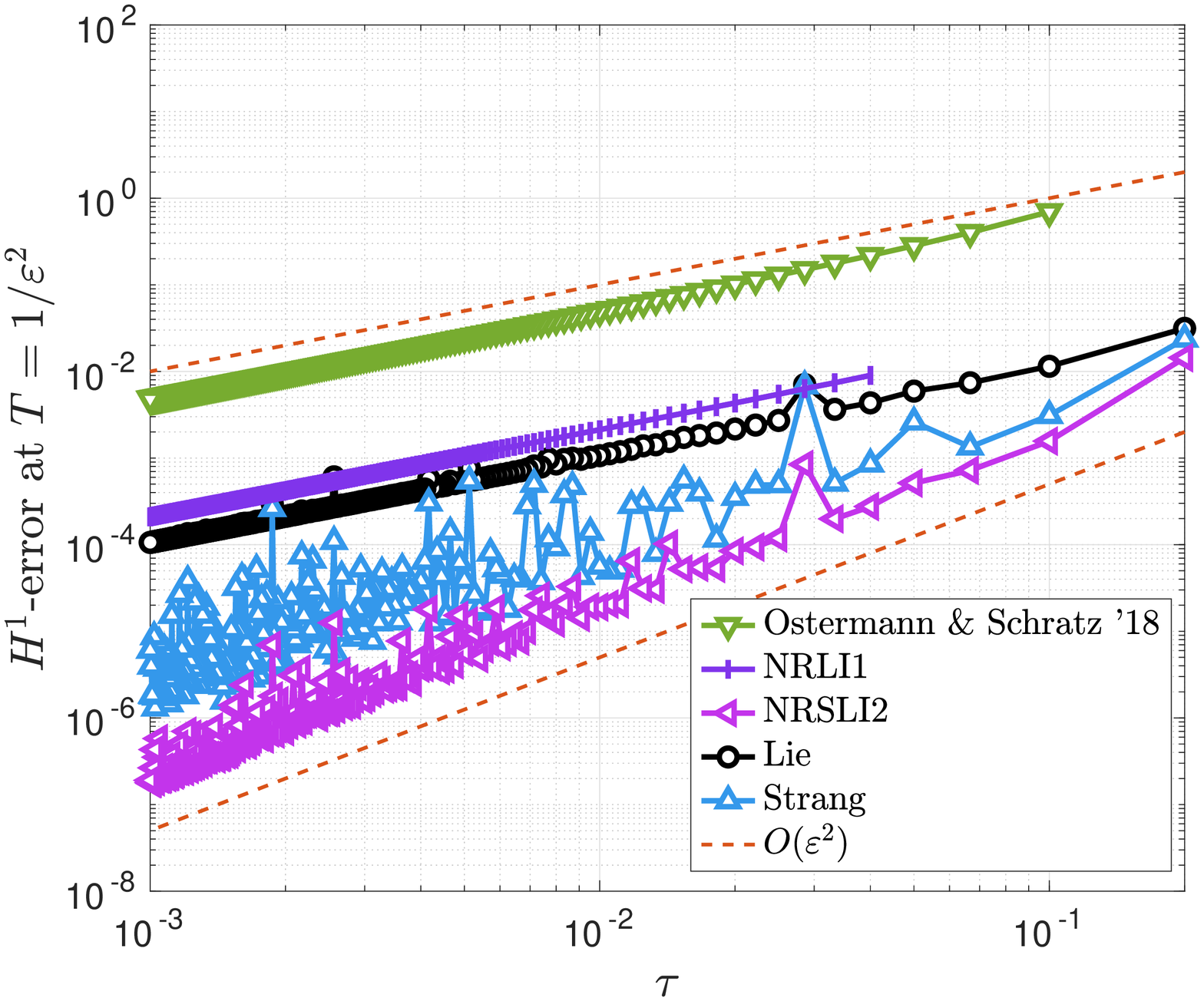}
	\caption{$H^3$ initial data ($\theta=3$).}
\end{subfigure}
	\begin{subfigure}{0.495\textwidth}
		\centering
		\includegraphics[width=0.985\textwidth]{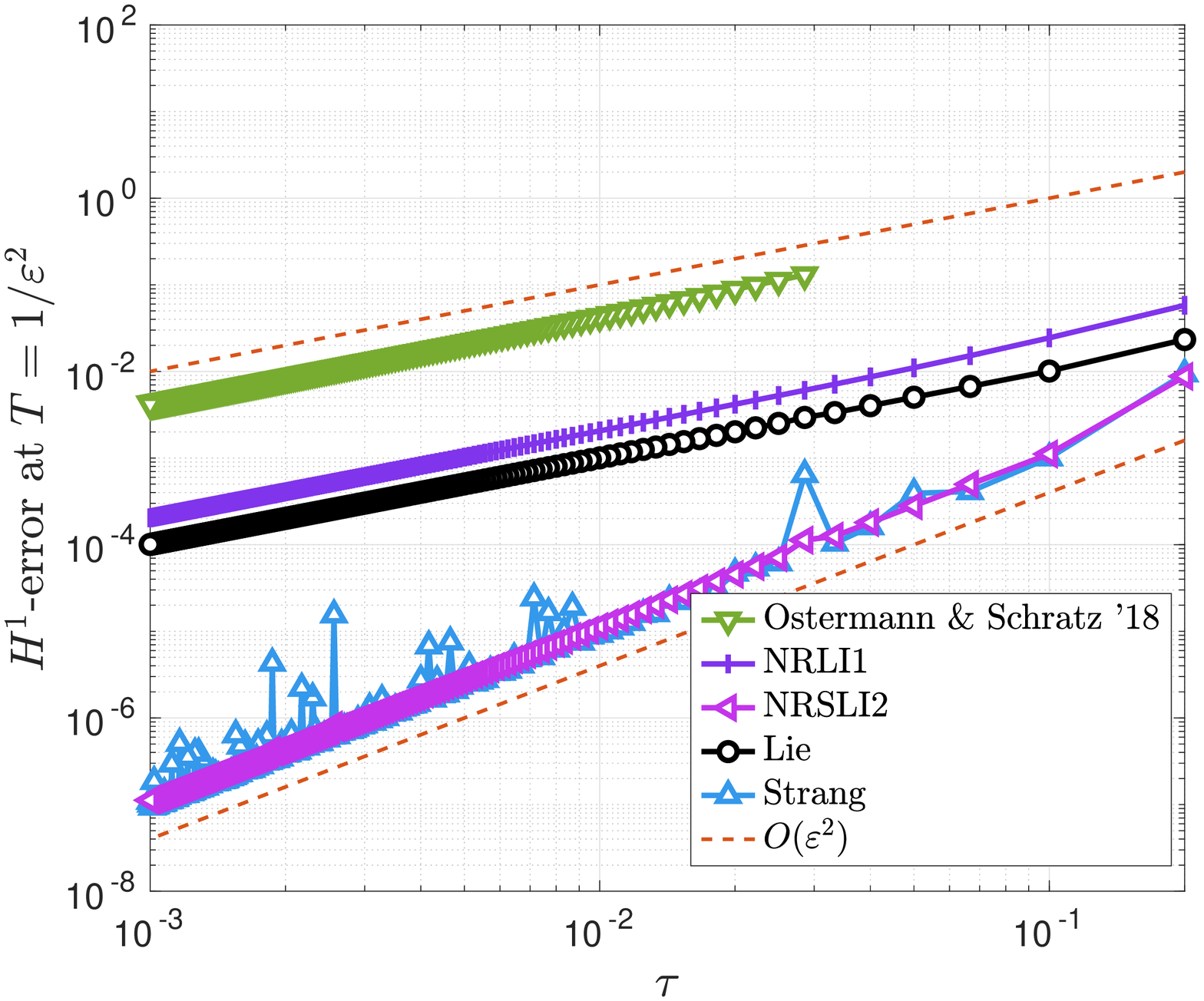}
		\caption{$H^4$ initial data ($\theta=4$).}
	\end{subfigure}
	\caption{Long-time $H^1$-error as a function of $\tau$ for fixed $\varepsilon=0.25$.}
	\label{fig:cubic_dependence_on_tau}
\end{figure}

\section{Conclusions}
Improved uniform error bounds on different low-regularity schemes for the long-time dynamics of the nonlinear Schr\"odinger (NLSE) with weak nonlinearity or small initial data were rigorous established. For the quadratic NLSE, the improved uniform $H^r$ bounds with $r>1/2$ for the first-order scheme and symmetric second-order scheme up to the time of order  $O(1/\eps)$ were carried out at $O(\eps\tau)$ and $O(\eps\tau^2)$ for the solution in $H^r$, respectively. For the cubic NLSE, we designed new non-resonant first-order and symmetric second-order low-regularity schemes and established the improved uniform error bounds with the help of the regularity compensation oscillation (RCO) technique up to the time of order $O(1/\eps^2)$. Numerical results were presented to confirm the improved uniform error bounds and underline the improved performance of our novel non-resonant schemes.


\end{document}